\newtheorem{theorem}{Theorem}[section]
\newtheorem{lemma}[theorem]{Lemma}
\newtheorem{proposition}[theorem]{Proposition} 
\newtheorem{corollary}[theorem]{Corollary}
\newtheorem{remark}[theorem]{Remark}
\newtheorem{definition}[theorem]{Definition}
\newtheorem{example}[theorem]{Examples}     
\newtheorem{hypothesis}{Hypothesis}
\def\qed{\hfill \hbox{\hskip 6pt\vrule
width6pt height6pt depth1pt  \hskip1pt}
\smallskip}
\newcommand{\bth}{\begin{theorem}}
\renewcommand{\eth}{\end{theorem}}
\newcommand{\bpr}{\begin{proposition}}
\newcommand{\epr}{\end{proposition}}
\newcommand{\bco}{\begin{corollary}}
\newcommand{\eco}{\end{corollary}}
\newcommand{\ble}{\begin{lemma}}
\newcommand{\ele}{\end{lemma}}
\newcommand{\bpf}{\begin{proof}}
\newcommand{\epf}{\end{proof}}
\newcommand{\bex}{\begin{example}}
\newcommand{\eex}{\end{example}}
\newcommand{\bdf}{\begin{definition}}
\newcommand{\edf}{\end{definition}}
\newcommand{\bre}{\begin{remark}}
\newcommand{\ere}{\end{remark}}
 \newcommand{\bal}{\begin{aligned}}
\newcommand{\eal}{\end{aligned}}
\newcommand{\ben}{\begin{enumerate}}
\newcommand{\een}{\end{enumerate}}
\def\P{{\mathbb P}}
\def\R{{\mathbb R}}
\def\C{{\mathbb C}}
\def\E{{\mathbb E }}
\def\hh{{\vskip 1mm}}
\begin{document}
 \title {   On Davie's  {\!}      uniqueness  {\!}    
for {\!}      some  degenerate SDEs  {\!}       }

\author{\normalsize          Enrico {\!} Priola {\!} \\ \small Dipartimento {\!} {\!} di {\!}   
 Matematica,  
Universit\`a {\!}  di {\!}  Pavia\\
 \small Via {\!} Adolfo {\!} Ferrata {\!} 5\\ \small 27100 Pavia {\!} \ \  enrico.priola@unipv.it
 }
\date{}

 \maketitle
 
\abstract {  We consider {\!}     singular {\!}        SDEs  {\!}    like 
 \begin{equation} \label{ss}
dX_t = b(t, X_t) dt + A X_t dt + \sigma(t) d{L}_t , \;\; t \in     [0,T], \;\; X_0 =x \in \R^n,
\end{equation}
 where $A$ is  {\!}    a {\!} real $n \times  n $ matrix, i.e.,  $A \in {\R}^n \otimes    {\R}^n$,  $b$ is  {\!}    bounded and H\"older {\!}     continuous,  
 $\sigma  : [0,\infty) \to {\!} {\R}^n \otimes    {\R}^d $ is  {\!}       a {\!}    locally bounded function  and $L= ({L}_t)$ is  {\!}    an $\R^d$-valued L\'evy process, $1 \le d \le n$. We show  that 
    strong existence and uniqueness  {\!}     together {\!}     with  $L^p$-Lipschitz dependence    on the initial condition  $x $ imply  Davie's  {\!}     uniqueness  {\!}    or {\!}     path by path uniqueness.  
  This  {\!}     extends  {\!}      a {\!} result   
 of [E. Priola,   AIHP, 2018] proved for {\!}     \eqref{ss}   
  when    $n=d$, $A=0$ and $\sigma(t) \equiv I  $.  
  We apply the result to {\!} some singular {\!}     degenerate SDEs  {\!}    associated to {\!} the  kinetic transport operator {\!}     $ \frac{1}{2} \triangle_v f + $ ${v \cdot \partial_{x}f} $   $+F(x,v)\cdot \partial_{v}f $  when  
      $n =2d $ and 
        $L$ is  {\!}    an ${\R}^d$-valued  Wiener {\!}     process. For {\!}      such equations  {\!}    strong existence and uniqueness  {\!}    are known under {\!}     H\"older {\!}     type   conditions  {\!}    on $b$.  We show that  in addition also {\!}  Davie's  {\!}     uniqueness  {\!}     holds.    
 }
 
 \vspace{2.7 mm}

\noindent {\bf Keywords:}  degenerate stochastic differential
equations  {\!}     - path-by-path uniqueness  {\!}    - H\"older
continuous  {\!}    drift.

\vspace{2.7 mm}

 \noindent {\bf Mathematics  {\!}     Subject Classification (2010):} 
60H10, 60J75, 34F05.

 \section{Introduction  }
 
 Davie's  {\!}     type uniqueness  {\!}     or {\!}     path-by path uniqueness  {\!}    for {\!}     stochastic differential equations  {\!}    (SDEs) has  {\!}    recently received a {\!} lot of attention (cf. \cite{D}, \cite{Fl}, \cite{flandoli}, \cite{Sh}, \cite{gubi}, \cite{Pr}, \cite{wresch}, \cite{mytnik} and see the references  {\!}    therein). 
  
  This  {\!}    type of uniqueness  {\!}    has  {\!}    been introduced in \cite{D} where A.M. Davie  considered a {\!}   SDE like $dX_t = b(t, X_t)\,dt + dW_t$, $X_0=x \in {\R}^n$, driven by an ${\R}^n$-valued  Wiener {\!}     process  {\!}     $W$ and having a {\!} bounded and measurable drift coefficient  $b$. For {\!}     such equations  {\!}     pathwise (or {\!}     strong) uniqueness  {\!}    in the sense of K. It\^o
   had already been established  by A.J. Veretennikov in \cite{Ver} even  with a {\!} multiplicative noise.   The paper {\!}     \cite{D} improves  {\!}    \cite{Ver}   by showing that  the previous  {\!}     equation has  {\!}    a {\!} unique solution  for {\!}     almost all choices  {\!}    of the driving Brownian path.   In other {\!}     words, 
 adding  a {\!} single Brownian  path    regularizes  {\!}    a {\!}  singular {\!}      ODE (cf. \cite{Fl} and \cite{gubi}).  
    
    Here we study Davie's  {\!}     type uniqueness  {\!}      for {\!}     singular {\!}      SDEs  {\!}    like
  \begin{gather} \label{25}
dZ_t = b(t, Z_t) dt + A Z_t dt + \sigma(t) d{L}_t , \;\; t \in [s,T], \;\; Z_s  {\!}    =x \in {\R}^n,
\end{gather}
 $T>0$, $s  {\!}    \in [0,T)$.   Here $A \in {\R}^n \otimes    {\R}^n$, 
   $\sigma  : [0,\infty) \to {\!} {\R}^n \otimes    {\R}^d$   is  {\!}    a {\!}  Borel and locally bounded  function, $1 \le d \le n$,
and ${L} = ({L}_t)$ is  {\!}    a {\!} $d$-dimensional L\'evy process  {\!}    defined on a {\!} complete probability space $(\Omega,$ $ {{\mathcal F}}, \P)$; ${\R}^{n} \otimes    {\R}^d$ indicates  {\!}    the space of all $n \times  d$ real matrices.
 
 The drift coefficients  {\!}     
$b: [0,T] \times  {\mathbb R}^n \to {\!} {\mathbb R}^n $ is  {\!}    Borel measurable, bounded and
 ${\beta}$-H\"older {\!}     continuous  {\!}    in the $x$-variable, uniformly in $t$, i.e.,    $ b \in L^{\infty}(0, T; C_{b}^{0,{\beta}}({\R}^n; {\R}^n))$. We concentrate on    the singular {\!}     case 
 ${\beta} \in (0,1)$  but   the results  {\!}    can be extended to {\!} the Lipschitz case ${\beta} = 1$.  
 
 We  generalize   a {\!} theorem
 proved in \cite{Pr} for {\!}      equations  {\!}    \eqref{25}  when $n=d$, $A=0$ and $\sigma(t) \equiv  I$.             In \cite{Pr}  it is  {\!}      shown in particular {\!}        that if  strong existence and uniqueness  {\!}    hold for {\!}     the SDE   and further {\!}      there is  {\!}    Lipschitz dependence in $L^p$-norm on  the initial condition $x$ (cf. Hypothesis  {\!}    \ref{qq} below) then we have   Davie's  {\!}      uniqueness  {\!}    for {\!}     the SDE (cf. Theorem \ref{main1}).
 
 Setting $M_t = \int_0^t \sigma(s) d{L}_s,$ 
 equation  \eqref{25}  can be   written as  {\!}         
  \begin{gather} \label{SDE}
 Z_{t}(\omega) = x + \int_{s}^{t} [ b (v, Z_{v}(\omega)) + AZ_{v}(\omega)]  dv \, + \, M_{t}(\omega) - M_s(\omega),   
 \end{gather}   
$\omega \in \Omega$ (we are considering the  stochastic integral $M_t$ as  {\!}    in Section 4.3 of \cite{A}). Note 
    that   $M = (M_t)$   is  {\!}    an example of  additive process  {\!}    with values  {\!}    in ${\R}^n$ (see, for {\!}     instance, Chapter {\!}     2 in \cite{RS}).  Additive processes  {\!}    generalize L\'evy processes  {\!}    by relaxing
the stationarity condition on the increments
(cf. \cite{Sato}, \cite{Ito}, \cite{Sato1} and the references  {\!}    therein). Note that in \cite{Pr} one considers  {\!}      $M ={L}$.  

Since in general $M$ does  {\!}    not have stationary increments,
 in order {\!}     to {\!} prove the uniqueness  {\!}    result we have to {\!} show that  the  proofs  {\!}    in \cite{Pr} can be carried out  without using the stationarity of  increments  {\!}    of  the driving process.   On the other {\!}     hand, 
    \eqref{SDE} is  {\!}    not covered by   \cite{Pr} even if $\sigma(t) $ is  {\!}    a {\!} constant matrix. Indeed  
the coefficient $Ax $ is  {\!}    not bounded and   in general one  cannot truncate such term and localize as  {\!}    in the end of   Section 5  of \cite{Pr}.  Truncating  $x \mapsto {\!} Ax$, when $d<n$, can make difficult to {\!}  obtain  strong uniqueness  {\!}    and Lipschitz dependence on $x$ (cf. \cite{CdR}, \cite{WaZa2}, \cite{men} and see Remark \ref{forse}).   

Before stating our {\!}     theorem on Davie's  {\!}       uniqueness  {\!}      we make assumptions  {\!}    on  the terms  {\!}    appearing in \eqref{25}:  $b(t,x)$, $A$, $\sigma(t)$ and the $d$-dimensional  L\'evy process  {\!}    $L$   defined on a {\!} complete probability space $(\Omega, {\cal F}, \P)$.   Recall that the law of $L$ is  {\!}    characterized by the L\'evy-Khintchine formula {\!} \eqref{levii}.  
\begin{hypothesis} \label{qq}     
  {\em 
(i) For {\!}     any $s  {\!}    \in [0,T]$  and $x \in {{\R}}^n$
 on $(\Omega, {\mathcal F}, \P)$ 
there exists  {\!}    a {\!} strong solution $(Z_{t}^{s,x})_{t \in [0,T]}$  to {\!} \eqref{SDE}.

\hh  (ii) Let us  {\!}    fix $s  {\!}    \in [0,T]$.
Given  any two {\!} 
strong
solutions  {\!}    $(Z_t^{s,x})_{t \in [0,T]}$
 and $(Z_t^{s,y})_{t \in [0,T]}$
 defined on  $(\Omega, {\mathcal F}, \P)$ which both solve   \eqref{SDE} 
with respect to {\!}  $A$, $\sigma(t)$,  ${L}$ and 
 $b$
(starting \ at $x$ and $y \in {\R}^n$, respectively, at time $s$)
we have, 
for {\!}     any $p \ge  2$,  
 \begin{equation} \label{ciao221}
\sup_{s {\,}    \in [0,T]} \E \big [ \displaystyle{ \sup_{s {\,}    \le t \le T}} \, |\, 
Z_t^{s,x} \; - \, Z_t^{s,y} |^p \big] 
\le C_T \;
 |x-y|^p, \quad  x,\, y \in {{\R}}^n,
\end{equation}
 with $C_T \!= C \big( \text{Law}({L})$, $A$, $\sigma$,  $ b$ ,
 ${\beta},$ $  n,  p, T \big) >0$      
   independent of $s$, $x,$ $y$. \qed
}   
\end{hypothesis}  
  \begin{theorem} \label{main1} Let us  {\!}    consider {\!}     \eqref{SDE} 
   with $A \in {\R}^n \otimes    {\R}^n$,   $b \in   L^{\infty}(0, T; C_{b}^{0,{\beta}}({\R}^n; {\R}^n))$, ${\beta} \in (0,1)$, and $\sigma  : [0, \infty) \to {\!}  {\R}^n \otimes      {\R}^d$ locally bounded.
  Assume 
  Hypothesis  {\!}    \ref{qq} and    
suppose $\E [ |{L}_1|^{\theta} ]$ $< \infty $, for {\!}     some $\theta  \in (0,1)$.  

Setting $\tilde b(t,x) = b(t,x) + A x$,   
 there exists  {\!}    an  event $\Omega' \in {\mathcal F}$ with $\P(\Omega') =1$ such that for {\!}     any $\omega \in \Omega'$, $x \in {{\R}}^n$, the integral equation
\begin{equation} \label{davi}
 g(t) = x + \int_{0}^{t} \tilde b(v, g(v) + M_{v}(\omega))  dv,\quad  t \in [0,T], 
\end{equation} 
has  {\!}    {exactly} one solution $g$ \ in $C([0,T];$ ${{\R}}^n)$. 
\end{theorem}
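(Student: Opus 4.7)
The plan is to follow the scheme of \cite{Pr}, reducing Davie's uniqueness to a path-by-path uniqueness statement for \eqref{SDE} and then using the $L^p$-Lipschitz dependence in Hypothesis \ref{qq}(ii) to build a jointly continuous flow of strong solutions. Setting $Z(t) := g(t) + M_t(\omega)$, the integral equation \eqref{davi} is equivalent, for a fixed realisation of the driver, to the pathwise version of \eqref{SDE} with $s = 0$. Thus it suffices to exhibit a single event $\Omega' \in \mathcal F$ of full probability on which the deterministic equation
\[
Z(t) = x + \int_0^t \tilde b(v, Z(v))\, dv + M_t(\omega),\qquad t \in [0,T],
\]
has at most one $C([0,T]; \R^n)$ solution for every initial datum $x$.

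The first main step is to construct, on such an $\Omega'$, a version of the random field $(s, x) \mapsto Z^{s,x}_\cdot(\omega)$ which is jointly continuous into $C([s,T]; \R^n)$. Hypothesis \ref{qq}(ii) together with Kolmogorov's continuity criterion delivers a modification H\"older continuous in $x$, uniformly in $s$; continuity in $s$ follows from a Gronwall argument along \eqref{SDE}, using boundedness of $b$, the Lipschitz character of $Ax$, local boundedness of $\sigma$, and the moment hypothesis $\E[|L_1|^\theta]<\infty$ to control increments of the additive driver $M$. Strong uniqueness (Hypothesis \ref{qq}(i)) then supplies the flow identity $Z^{s,x}_t = Z^{r, Z^{s,x}_r}_t$ first for rational $(s,r,x)$ on a common null set and then for all real parameters by the above joint continuity.

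The second main step is to transplant the pathwise uniqueness argument of \cite{Pr} to the present setting. Given $\omega \in \Omega'$ and two $C([0,T]; \R^n)$ solutions of \eqref{SDE} from $x$ at time $0$, one compares each of them with the continuous flow $(s,y) \mapsto Z^{s,y}_\cdot(\omega)$: at every rational $(s,y)$ the value of the flow is fixed by a countable choice of null sets, and the $L^p$-Lipschitz estimate \eqref{ciao221}, propagated pathwise on a full-measure event, controls deviations by a Gronwall-type inequality in which the H\"older contribution of $b$ is handled along the flow as in \cite{Pr}. The unbounded coefficient $Ax$ is absorbed into the Lipschitz constant without localisation, which is the point that frees the argument from the bounded-drift restriction of \cite{Pr} and sidesteps the difficulties in truncating the linear term when $d < n$ (cf.\ Remark \ref{forse}).

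The main obstacle I expect is in this last step, where the proof of \cite{Pr} invokes stationarity of the increments of $L$ via time-shift identities. Here $M_t = \int_0^t \sigma(s)\,dL_s$ is only an additive process, so such identities are unavailable and must be replaced by uniform-in-$s$ estimates. Crucially, however, Hypothesis \ref{qq}(ii) is stated with $C_T$ independent of the starting time $s \in [0,T]$, and this built-in uniformity is precisely what allows the local estimates in \cite{Pr} to be re-derived for the additive driver $M$ without appealing to stationary increments. A secondary point to monitor is the continuity-in-$s$ estimate for the flow, which with an unbounded linear coefficient $A$ requires a Gronwall step along trajectories rather than a bare moment inequality.
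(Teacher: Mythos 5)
Your overall scheme (build a regularized flow of strong solutions via Hypothesis \ref{qq}(ii), then compare an arbitrary solution of \eqref{davi} with that flow, as in \cite{Pr}) is the right skeleton, but the proposal skips the paper's key new idea and the step you replace it with would not go through as stated. The paper does \emph{not} keep the unbounded term $Ax$ and ``absorb it into the Lipschitz constant'': it removes it entirely by conjugation, $U(t)=e^{-tA}Z(t)$ (Lemma \ref{qee}, resting on the stochastic-Fubini integration by parts of Lemma \ref{sa1}), which turns \eqref{SDE} into the equation \eqref{dai} with the \emph{bounded} H\"older drift $\tilde b(r,x)=e^{-rA}b(r,e^{rA}x)$ driven by the additive process $\tilde M_t=\int_0^t e^{-rA}\sigma(r)\,dL_r$, and it checks that Hypothesis \ref{qq} transfers to \eqref{dai}. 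This matters because the machinery of \cite{Pr} that you invoke (the weighted Kolmogorov construction of the $C([0,T];\R^n)$-valued field in Lemma \ref{fi}, the c\`adl\`ag-in-$s$ modification of the $C(\R^n;G_0)$-valued process, and the final comparison with an arbitrary solution) uses boundedness of the drift: with $Ax$ retained, the process $\bar Y^{s,x}_t=V^{s,x}_t-(\tilde M_t-\tilde M_s)$ is no longer bounded by $|x|+T\|b\|_{0,T}$ but is controlled only through $\sup_{t\le T}|M_t|$, which under the standing assumption $\E[|L_1|^\theta]<\infty$, $\theta\in(0,1)$, has no moments of order $p>1$, so the $L^p$ ($p>2n$) estimates with polynomial weights needed for the field and for $\sup_s\E[U_{s,p}^p]<\infty$ cannot be produced by a Gronwall step. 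Your phrase ``absorbed into the Lipschitz constant without localisation'' is precisely the difficulty the paper flags (it is why one cannot localize as at the end of Section 5 of \cite{Pr}), and no argument is offered for it; the conjugation of Lemma \ref{qee} is the missing ingredient.

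On the non-stationarity of the driver you identify the right worry but misplace the cure. The uniformity in $s$ of the constant in \eqref{ciao221} is indeed used (it gives the uniform bound \eqref{ri4}), but by itself it says nothing about the increments of the driver. The actual replacement for stationarity is Proposition \ref{s22}: the quantitative increment bounds $\E[|I_t-I_s|^2]+\E[|J_t-J_s|^2]\le C_T|t-s|$, $\E[|K_t-K_s|^{\theta}]\le C_T|t-s|$ and $\E[\sup_{t\le T}|\tilde M_t|^{\theta}]<\infty$, which are fed through Chebyshev into the three-increment (Chentsov-type) criterion of Theorem \ref{modi12} and into Lemma \ref{modi1a}. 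Relatedly, with a jump L\'evy driver the map $s\mapsto$ flow is only c\`adl\`ag, not continuous, and this regularity in $s$ is not obtained by ``a Gronwall argument along \eqref{SDE}'' but by a genuine modification theorem for the process with values in the Polish space $C(\R^n;G_0)$; this is the second place where your outline would need to be replaced by the paper's actual argument (Theorem \ref{h322} via Lemmas \ref{fi}, \ref{modi1a}, \ref{fll} and Theorem \ref{modi12}).
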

 \noindent The previous  {\!}    result will be deduced from   Theorem \ref{h32} which extends  {\!}      Theorem 5.1 in \cite{Pr}. We remark that   in Corollary \ref{unb}  we  will show  Davie's  {\!}    uniqueness  {\!}    for {\!}     SDE \eqref{SDE} when $b$  is  {\!}    locally H\"older {\!}      continuous  {\!}       by a {\!} standard localization procedure. 
   
A special case of \eqref{SDE}  is  {\!}    the following SDE     
 \begin{equation} \label{d}
dZ_t = A Z_t dt + b(t, Z_t)dt + C d L_t ,\;\; Z_s  {\!}    =x \in \R^n.
\end{equation}  
 Here
 $A \in {\R}^n \otimes    {\R}^n$ and $C \in {\R}^n \otimes    {\R}^d$ are   given matrices. 
  When $L=W$ is  {\!}    a {\!} $d$-dimensional Wiener {\!}     process, $d<n$,  pathwise uniqueness, flow   and differentiability properties  {\!}    of the solutions  {\!}    to {\!} \eqref{d} have  been  recently   investigated also {\!} under {\!}     H\"older {\!}     type conditions  {\!}     on $b$ (see, for {\!}     instance,  \cite{CdR}, \cite{WaZa2}, \cite{FFPV}, \cite{men}, \cite{Z}   
   which consider {\!}      more general degenerate SDEs  {\!}    as  {\!}    well).  
    
As  {\!}    an  example of degenerate SDE of the form \eqref{d} we consider {\!}      
\begin{equation}
\label{eq-SDE}
\begin{cases}
d X_t &  =V_t d t,\; \;  d V_t = F \left(  X_t \right)  d t + d W_{t} \\
X_0   &  =x_{0} \in \R^d, \;\; \quad V_0  =v_{0} \in \R^d,
\end{cases}   
\end{equation}  
 (see Section 5 for {\!}     the case  in which   $F$  possibly depends  {\!}    also {\!} on $V_t$; see also {\!} Remark \ref{forse} for {\!}     more general SDEs).  Equation \eqref{eq-SDE}  involves  {\!}     the velocity-position  
of a {\!} particle that moves  {\!}     according to {\!} the Newton second law  
in a {\!} force-field   $F$ and under {\!}     the  action of noise (see \cite{Pav} and the references  {\!}    therein).   
It 
 is  {\!}    associated to {\!} the well-studied  kinetic transport operator {\!}      $ \frac{1}{2} \triangle_v f + $ ${v \cdot \partial_{x}f} $   $+F(x)\cdot \partial_{v}f $. 
 An application of \eqref{eq-SDE} to {\!} the study of   
  singular {\!}      kinetic transport SPDEs  {\!}     is  {\!}    given in  \cite{FFPV}.

 In this  {\!}    case $n = 2d$, $d \ge 1$, and  $W$ is  {\!}    a {\!} $d$-dimensional Wiener {\!}     process. Moreover {\!}     
 $b(z) = b(x,v) = \begin{pmatrix}    0  \\ F (x)   \end{pmatrix}:$ $ \R^{2d} \to {\!} \R^{2d}$. When   $F : \R^d \to {\!} \R^d$ has  {\!}    at most a {\!} linear {\!}     growth  and it is  {\!}    locally $\beta$-H\"older {\!}     continuous  {\!}     with $\beta \in (2/3, 1)$  it is  {\!}    known that there exists  {\!}    a {\!} unique  strong solution (the value   $2/3$  is  {\!}    the critical H\"older {\!}     index for {\!}     strong uniqueness, cf. \cite{CdR}, \cite{WaZa2} and \cite{men}). Under {\!}     these assumptions  {\!}    applying  Corollary \ref{unb}
  we can show that  also {\!}   Davie's  {\!}    type uniqueness  {\!}    holds  {\!}    for {\!}     \eqref{eq-SDE}.

 We   mention \cite{gubi} where  in particular {\!}      path-by-path uniqueness  {\!}    for {\!}     
 SDEs  {\!}    with additive fractional Brownian noise is  {\!}    investigated. 
 Finally,  remark that     path-by-path uniqueness  {\!}    has  {\!}    been also {\!} studied in infinite dimensions  {\!}    for {\!}     some  
 SPDEs. We refer {\!}     to {\!}  \cite{wresch} and \cite{mytnik}.

 \section{Notation and preliminary results}
  
 The Euclidean norm in ${\R}^k $, $k \ge 1,$ and the inner {\!}     product are indicated by $|\cdot|$ and $\langle \cdot , \cdot  \rangle $ respectively.  Moreover, ${\cal B}(A)$ indicates  {\!}    the Borel $\sigma$-algebra {\!} of a {\!} Borel set $A \subset \R^k$.      
 
    We denote by 
  $C_{b}^{0,{\beta}}({\R}
 ^{n};{\R}^{k})$, ${\beta} \in (0,1)$,
   the space of all { ${\beta}$-H\"older {\!}     continuous}
   functions  {\!}    $f$, i.e., $f$ verifies
$$
 \begin{array}{l}
 [f]_{C^{0,{\beta}}_b} = [ f]_{{\beta}}:=\sup_{x\neq x'\in\mathbb{R}^{n}}
 {(|f(x)-f(x')|}\, {|x-x'|^{-{\beta}}})<\infty
\end{array}
$$
  Note that 
 $C_{b}^{0,{\beta}}({\R}) 
 ^{n};\mathbb{R}^{k})$ is  {\!}    a {\!} Banach space with the norm $
   \| \cdot \|_{{\beta}}$ $ = \| \cdot \|_0 + [\cdot ]_{{\beta}}$. 
 
To {\!} study  \eqref{25} 
we require that  $b$   belongs  {\!}    to {\!} $ L^{\infty}(0, T; C_{b}^{0,{\beta}}({\R}^n; {\R}^n))$. Hence 
 $b:$ $ [0,T] \times  {\R}^n \to {\!} {\R}^n $ is  {\!}    Borel  and bounded, $b(t, \cdot ) \in  C_{b}^{0,{\beta}}({\R}^n; {\R}^n)$, $t \in [0,T]$, and
   $ [b]_{{\beta}, T} = \sup_{t \in [0,T]} [b(t, \cdot)]_{C^{0, {\beta}}_b} < \infty.$ We also
set $\| b\|_{{\beta},T} = [ b]_{{\beta}, T}$ $+ \| b\|_{0,T}$; \  
$\|b\|_{0, T} $ $= \sup_{t \in [0,T], x \in {\R}^d} |b(t,x)|$,
  ${\beta} \in (0,1)$.   Finally, a {\!} function   $g$ $ \in C_{0}^{\infty}(\R^n)$ if $g$ belongs  {\!}    to
 $ C^{\infty}(\R^n)  $ and  has  {\!}    compact support.

\medskip     
Let ${L} =({L}_t)$  be a {\!} L\'evy process  {\!}    with values  {\!}    in ${\R}^d$ defined on a {\!} complete  probability space $(\Omega, {\mathcal F}, \P)$ (see \cite{Sato}, \cite{Kr} and \cite{A}).  Thus  {\!}    $L$
 has  {\!}    independent and  stationary  increments,      c\`adl\`ag  trajectories  {\!}    and    $L_0=0$, $\P$-a.s..     We will denote by ${L}_{s-}(\omega)$ the left-limit in $s>0$, $\omega \in \Omega$.   
  
 For {\!}     $0 \le s  {\!}    < t < \infty$ we denote by  ${\mathcal F}_{s,t}^{L}$
   the completion of the $\sigma$-algebra {\!} generated by  ${L}_r {\!}     - {L}_s$, $ s  {\!}    \le r {\!}     \le t$. We also {\!} define ${\mathcal F}_{0,t}^{L} = {\mathcal F}^{L}_t. $ Since ${L}$ has  {\!}    independent increments  {\!}    we have that ${L}_q - {L}_p$ is  {\!}    independent of ${\mathcal F}_{p}^{L}$ when  $0 \le  p< q$.

 We say that 
$\tilde \Omega \subset \Omega$ is  {\!}     an   almost sure event if $\tilde \Omega \in {\mathcal F}$ and $\P(\tilde \Omega)=1$. As  {\!}    in \cite{Pr}
 we write ${\tilde \Omega}_\mu$
to {\!} stress  {\!}    that   $\tilde \Omega$ possibly depends  {\!}    also {\!} on the  parameter {\!}     $\mu$  ($\tilde \Omega_{\mu}$ may change from one proposition to {\!} another). For {\!}     instance, we write $\tilde \Omega_{s,x}$   
 or {\!}     $\Omega_{s,x}'$.

\smallskip  Recall the exponent $\phi$ of ${L}$. This  {\!}    is  {\!}     
   a {\!} function $\phi$ $ : {\R}^d  \to {\!} {\C}$ such that 
 $
 \E [e^{i   \langle  {L}_t, k\rangle}] = e^{- t
\phi(k)}, $ $  k \in {\R}^d,$ $ t \ge 0.
$
  The L\'evy-Khintchine formula {\!}  says  {\!}    that 
 \begin{gather} \label{levii}
 \phi(k)= \frac{1}{2}\langle Q k,k  \rangle   - i {\!}  \langle a, k\rangle
- \int_{{\R}^d} \! \! \big(  e^{i   \langle k,y \rangle }  - 1 -  { i {\!}  \langle k,y
\rangle} \, {1}_{\{ |y| \le 1\}} \, (y) \big ) \nu (dy), 
\end{gather}
 $k \in {\R}^d,$ where  $Q$ is  {\!}    a {\!} symmetric non-negative definite $d \times  d$-matrix, $a
\in {\R}^d$ and $\nu$ is  {\!}    a {\!} $\sigma$-finite (Borel)
measure on ${\cal B}({\R}^d)$, such that  
 $ \int_{{ {\R}^d}}    (1 \wedge $ $|v|^2 ) \, \nu(dv)$
$ <\infty,$  $\nu (\{ 0\})=0$;
$\nu$
 is  {\!}    the   L\'evy  measure (or {\!}     intensity measure) of ${L};$    
   $(Q, a, \nu)$ is  {\!}    called the generating triplet (or {\!}     characteristics)   of ${L}$; it uniquely identifies  {\!}    the law of ${L}$. 
 
To {\!} study \eqref{25} we  may assume that $a {\!} = 0$ because eventually  we can replace the drift  $b(t,x)$ with   $b(t,x) + \int_0^t \sigma(s)\, a {\!} \, ds$. 

 The  Poisson random measure $N$ associated to {\!} ${L}$ is  {\!}    defined by 
$
N((0,t] \times  A) $ $ = \sum_{0 < s  {\!}    \le t} 1_{A} (\triangle {L}_s) 
$, 
 for {\!}     any Borel set $A \subset {{\R}^d}  \setminus  {\!}    \{ 0 \}$ with 
 $\triangle {L} _s$ $ = {L}_s  {\!}    - {L}_{s-}$. 
 
 According to {\!} \eqref{levii} with $a=0$ we have the following  L\'evy-It\^o {\!} path decomposition:

  There exists  {\!}    a {\!} $Q$-Wiener {\!}     
 process  {\!}    $B = (B_t)$ on $(\Omega, {{\mathcal F}}, \P)$ independent of $N$
with $d \times  d$ covariance 
matrix  $Q$ 
  such that on some almost sure event $\Omega'$ we have  
  \begin{equation} \label{itl}
 {L}_t =  A_t + B_t + C_t,
  \;\;\; t \ge 0, \;\;\; \text{with components}
\end{equation}  
 \begin{equation*}
 \label{ito1}
 A_t^j =
 \int_0^t \int_{\{ |x| \le 1\} } x_j \tilde N(dr, dx), 
 \;\;\; 
 C_t^j = \int_0^t \int_{\{ |x| > 1 \} } x_j  N(dr, dx),\;\; j =1, \ldots, d; 
 \end{equation*}
 here $\tilde N$ is  {\!}    the compensated Poisson measure (i.e., $\tilde N (dt,  dx)$ $ = N(dt, dx)-  dt \nu(dx)$).

Let us  {\!}    fix a {\!} deterministic Borel and locally bounded function ${\tilde \sigma}: [0,\infty) \to {\!} {\R}^{n} \otimes    {\R}^d$. 
The stochastic integral process  {\!}    ${\tilde M} = (\tilde M_t)$,
\begin{equation}\label{sto1}
\begin{array}{l}
\tilde M_t =\int_0^t {\tilde \sigma}(s) d{L}_s,\;\; t \ge 0,
\end{array}  
\end{equation}
is  {\!}    well defined; $\tilde M_t$ is  {\!}    a {\!} limit in probability of suitable Riemann-Stieltjes  {\!}    sums, see for {\!}     instance Chapter {\!}     2 in \cite{RS}  (we are considering the c\`adl\`ag  version of such stochastic integral).   
Equivalently, 
 one can define 
\begin{gather}\label{sto2}
\int_0^t {\tilde \sigma}(s) d{L}_s  {\!}    = I_t + J_t + K_t, 
 \\ \nonumber {\!}     
I_t^i {\!}  = \sum_{j =1}^d  \int_0^t {\tilde \sigma}_{ij}(s) dA_s^j ,\;\; J_t^i=  \sum_{j =1}^d  \int_0^t {\tilde \sigma}_{ij}(s) dB_s^j,\;\;  K_t^i=\sum_{j=1}^d \int_0^t  {\tilde \sigma}_{ij}(s) dC_s^j,   
\end{gather} 
 $i=1, \ldots, n$.
 The components  {\!}    of $I$ and $J$ are $L^2$-martingales  {\!}    and $K_t$ 
 is  {\!}    a {\!} Lebesgue-Stieltjes  {\!}    integral defined pathwise (recall that $(C_t^j)$
 is  {\!}    a {\!} compound Poisson process).
 
 The following result 
 will be useful.
 \begin{proposition}\label{s22} Assume 
 that $\E |{L}_1|^{\theta} < \infty$ for {\!}     {\!}     some $\theta  \in (0,1)$. Let $T>0.$ Then we have (cf. \eqref{sto2})  
\begin{gather} \label{servi}
\E [| I_t -  I_s  {\!}    |^2] +  \E [|J_t - J_s|^2] \le   C_T |t-s|,\;\;\; t,s  {\!}    \in [0,T],
\\ \nonumber {\!}     
  \E [|K_t -   K_s|^{\theta}] \le C_T |t-s|, \;\; s,t \in [0,T],
\\  \nonumber
  \E [\sup_{t \le T }  |\tilde M_t|^{\theta} ] < \infty.
\end{gather}
 \end{proposition}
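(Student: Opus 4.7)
The plan is to treat the three summands in \eqref{sto2} separately, since each one comes from a different piece of the L\'evy-It\^o decomposition \eqref{itl} and calls for a different estimate.

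For the continuous martingale $J$: since $B$ is a $Q$-Wiener process and $\tilde \sigma$ is deterministic and locally bounded, each component $J^i$ is a square-integrable continuous martingale and the It\^o isometry gives
\[
\E|J_t - J_s|^2 = \int_s^t \mathrm{tr}\bigl(\tilde \sigma(r) Q \tilde \sigma(r)^*\bigr)\, dr \le C_T |t-s|,
\]
with $C_T$ depending on $Q$ and on $\sup_{r \le T}\|\tilde \sigma(r)\|$. For the small-jump compensated martingale $I$, the analogous isometry for stochastic integrals against $\tilde N$ yields
\[
\E|I_t - I_s|^2 = \int_s^t \int_{|x|\le 1} |\tilde \sigma(r) x|^2 \, \nu(dx)\, dr,
\]
which is bounded by $C_T |t-s|$ using the standing property $\int_{|x|\le 1}|x|^2\, \nu(dx) < \infty$ of L\'evy measures. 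This proves the first line of \eqref{servi}.

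The second line, the $\theta$-moment estimate for $K_t - K_s$, is the substantive step and is where the hypothesis $\E|L_1|^\theta < \infty$ enters. Since $C$ is a compound Poisson process, $K_t - K_s$ is, path by path, a \emph{finite} sum
\[
K_t - K_s = \sum_{s < \tau \le t,\, |\Delta L_\tau| > 1} \tilde \sigma(\tau)\, \Delta L_\tau.
\]
Because $\theta \in (0,1)$, the map $u \mapsto u^\theta$ on $[0,\infty)$ is subadditive, so
\[
|K_t - K_s|^\theta \le \sum_{s < \tau \le t} |\tilde \sigma(\tau)\, \Delta L_\tau|^\theta\, 1_{\{|\Delta L_\tau|>1\}}.
\]
Taking expectation and applying the compensation formula for the Poisson random measure $N$ associated to $L$ yields
\[
\E|K_t - K_s|^\theta \le \int_s^t \int_{|x|>1} |\tilde \sigma(r) x|^\theta \, \nu(dx)\, dr,
\]
and the classical moment criterion for L\'evy processes (see e.g.\ \cite{Sato}) gives the equivalence $\E|L_1|^\theta < \infty \iff \int_{|x|>1} |x|^\theta\, \nu(dx) < \infty$, so the inner integral is finite and, by local boundedness of $\tilde \sigma$, the whole expression is bounded by $C_T |t-s|$.

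For the supremum estimate $\E \sup_{t \le T}|\tilde M_t|^\theta < \infty$: Doob's $L^2$-inequality applied to the martingales $I$ and $J$, combined with Jensen's inequality (since $\theta < 2$), handles those two components. For $K$, the pathwise bound $\sup_{t \le T}|K_t|^\theta \le \sum_{0 < \tau \le T,\, |\Delta L_\tau| > 1} |\tilde \sigma(\tau)\,\Delta L_\tau|^\theta$ and the same compensation argument give finiteness. The only subtle point in the whole proof is the $K$-estimate: one does \emph{not} have a useful $L^\theta$-isometry for the large-jump integral, but subadditivity of $|\cdot|^\theta$ combined with the $\theta$-moment criterion for $\nu$ bypasses this and delivers a \emph{linear} (rather than merely sublinear) bound in $|t-s|$, which is what is needed downstream.
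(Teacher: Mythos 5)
Your proof is correct and follows essentially the same route as the paper: It\^o-type isometries for the Gaussian and compensated small-jump integrals, subadditivity of $u\mapsto u^{\theta}$ plus the compensation formula and Sato's moment criterion ($\E|L_1|^{\theta}<\infty \iff \int_{\{|x|>1\}}|x|^{\theta}\,\nu(dx)<\infty$) for the large-jump part, and Doob's inequality together with the pathwise jump bound for the supremum estimate. No gaps; if anything, your labelling of $I$ (compensated small jumps) and $J$ (Wiener part) is the one consistent with \eqref{sto2}, whereas the paper's displayed computation interchanges the two roles, which is immaterial.
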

\begin{proof} The fist estimate $\E [| I_t -  I_s  {\!}    |^2]$  $ \le C_T |t-s|$ is  {\!}    clear {\!}     by the It\^o {\!} isometry and the fact that ${\tilde \sigma}$ is  {\!}    bounded on $[0,T]$. 
  Similarly we have, using also {\!} Corollary 2.10 in \cite{Ku},  for {\!}     $0 \le s  {\!}    < t \le T$,    
\begin{gather*}
 \E [| J_t -  J_s  {\!}    |^2 ] = \sum_{i=1}^n  \sum_{j,k =1}^d\E  \big [ \int_s^t  \int_{ \{|x|\le 1 \} } {\tilde \sigma}_{ij}(r)x_j  {\tilde \sigma}_{ik}(r) x_k  \, dr {\!}     \nu (dx)\big ]
 \\ = \int_s^t \int_{\{ |x|\le 1 \} } |{\tilde \sigma} (r) x|^2 dr {\!}     \nu (dx) 
 \le C_T (t-s) \int_{\{ |x|\le 1 \} } | x|^2  \nu (dx).
\end{gather*}
Moreover,  applying  the Doob theorem we get 
 \begin{gather*}
\E [\sup_{t \le T }  |I_t|^{2} ] < \infty,\;\;\;  \E [ \sup_{t \le T }  |J_t|^{2} ] < \infty.
\end{gather*}
 It remains  {\!}    to {\!} consider {\!}     $(K_t)$. We find (see also {\!} pag. 231 in \cite{A})
 \begin{gather*}
  |K_t - K_s|^{\theta} = \Big | \int_s^t \int_{\{ |x|>1 \} } {\tilde \sigma}(r)x \,  N(dr,dx)   
   \Big|^{\theta}  
  =  
  \Big |   \sum_{s {\,}    < u \le t } {\tilde \sigma}(u)  \triangle {L}_u    \, 1_{ \{ |\triangle {L}_u| >1 
 \}} \Big|^{{\theta}}   
\\ \le 
  \sum_{s {\,}    < u \le t } |{\tilde \sigma}(u)  \triangle {L}_u|^{\theta}    \, 1_{ \{ |\triangle {L}_u| >1  \} }  
\le \| {\tilde \sigma}\|_{0, T}^{\theta}   \sum_{s {\,}    < u \le t }  |\triangle {L}_u|^{{\theta}} \,   1_{ \{ |\triangle {L}_u| >1 
 \} }    
 \end{gather*}
 since the random sum is  {\!}    finite for {\!}     any $\omega \in \Omega$ and ${\theta} \le 1$; $\| {\tilde \sigma}\|_{0, T} = \sup_{t \in [0,T]} \| {\tilde \sigma}(t)\|$.  On the other {\!}     hand (cf. Section 2.3.2 in \cite{A})  we know that 
\begin{equation*}
\begin{array}{l}
\E \big [ \sum_{s {\,}    < u \le t }  |\triangle {L}_u|^{{\theta}} \,   1_{ \{ |\triangle {L}_u| >1 
 \} } \big] = \E \big [  \int_s^t \int_{\{ |x| > 1 \} } |x|^{{\theta}}  N(dr, dx) \big] 
 \\
  = (t-s)  \int_{\{ |x| > 1 \} } |x|^{{\theta}} \nu (dx)  = C_{\theta} (t-s). 
\end{array} 
\end{equation*}
 In the last passage we have used 
  Theorem 25.3    in \cite{Sato}:    
 $\E [|{L}_1|^{{\theta}}] <  \infty$ is  {\!}    equivalent to {\!}  $\int_{\{ |x| > 1 \} } |x|^{{\theta}} \nu (dx) < \infty$. Thus  {\!}    we arrive at 
 \begin{equation*}
 \E [|K_t - K_s|^{\theta}]  \le C_T |t-s|.  
 \end{equation*} 
 Finally, arguing as  {\!}    before, 
 \begin{gather*}
 \E [\sup_{t \le T} |K_t |^{\theta}] \le \E \big[\sup_{t \le T}     \sum_{s {\,}    < u \le t } |{\tilde \sigma}(u)  \triangle {L}_u   |^{{\theta}}   \, 1_{ \{ |\triangle {L}_u| >1 
 \}} \big]
 \\
 \le \tilde C_T \E [ \sum_{0 < u \le T }  |\triangle {L}_u|^{{\theta}} \,   1_{ \{ |\triangle {L}_u| >1 
 \} } ]
 \\ = \tilde C_T  \E \big[ \int_0^T \int_{\{ |x| > 1 \} }  |x|^{{\theta}}  N(ds, dx)\big]
 = \tilde C_T \,  T   \int_{\{ |x| >1 \}} |x|^{{\theta}} \nu  (dx) \big] < \infty.
\end{gather*}   
  The proof is  {\!}    complete.
 \end{proof}
 
\def\ciao1 { 
\begin{remark} \label{ma1} {\em  We point out that Theorem \ref{main1} could be proved  for {\!}     more general additive ${\R}^n$-valued processes  {\!}     $M= (M_t)$ (non necessarily of the form \eqref{sto1}) under {\!}     the next assumptions. 

First recall  the L\'evy-It\^o {\!} decomposition for {\!}     additive processes  {\!}    (see  Chapter {\!}     4 in \cite{Sato}):
\begin{equation} \label{}
M_t = \alpha {\!} (t) + K_t + S_t,\;\;\; t \ge 0,
\end{equation}
where $\alpha(t)$ is  {\!}    a {\!} deterministic function, $K = (K_t) $ is  {\!}    an ${\R}^n$-valued Gaussian additive process, $S= (S_t)$ is  {\!}    an ${\R}^n$-valued additive pure jump process. Moreover,
$$
S_t = S_t^1 + S_t^2, \;\; t \ge 0,  
$$
where $S^1$ and $S^2$ are the small jumps  {\!}    part and the large jumps  {\!}    part respectively.
 We assume that  $\alpha {\!} : [0,\infty) \to {\!} {\R}^n$ is  {\!}    locally Lipschitz so {\!} that possibly  replacing $b(t,x)$ by $b(t,x) + \frac{d\alpha}{dt}(t)$ we may assume that $\alpha {\!} (t)$ is  {\!}    identically zero. Moreover {\!}     (cf. Proposition \ref{s22})  we assume that, for {\!}     any $T>0$, there exists  {\!}    $C_T>0$ and  $\theta  = \theta_T \in (0,1)$ such that 
\begin{gather*} 
\E [ | S_t^1 -  S_s^1 |^2] +  \E[ |K_t - K_s|^2] \le C_T |t-s|,\;\;\; t,s  {\!}    \in [0,T],
\\ 
  \E [|S_t^2 -   S_s^2|^{\theta}] \le C_T |t-s|, \;\; s,t \in [0,T], 
\;\;\; 
\text{ and} \;\;    
  \E [\sup_{t \le T }  |M_t|^{\theta} ] < \infty.
\end{gather*}
Under {\!}     the previous  {\!}    conditions  {\!}    on $M$ one can still prove Theorem \ref{main1} assuming  Hypothesis  {\!}    \ref{qq} and $b \in   L^{\infty}(0, T; C_{b}^{0,{\beta}}({\R}^n; {\R}^n))$, ${\beta} \in (0,1)$.  The proof remains  {\!}    almost the same. 
 } 
 \end{remark} 
} 

Let us  {\!}    fix a {\!} metric space $(\Lambda {\!} ,d)$.
 Given two {\!} stochastic processes  {\!}    ${U} = (U_t)_{t \in [0,T]}$
and $V = (V_t)_{t \in [0,T]}$ defined on ${(\Omega, {{\mathcal F}}, \P)}$ and with values  {\!}    in  $(\Lambda,d)$, we say that $U$ is  {\!}    a {\!} { modification} { or {\!}     version} of $V$ if  $U_t = V_t$, $\P$-a.s., for {\!}     any $t \in [0,T]$.

\smallskip As  {\!}    before  ${L} = ({L}_t)$ is  {\!}    a {\!} $d$-dimensional  L\'evy process  {\!}    defined on   $(\Omega, {{\mathcal F}}, \P)$.
Let  $b : [0,T] \times  {\R}^n \to {\!} {\R}^n$ and $\sigma  : [0,\infty) \to {\!} {\R}^n \otimes    {\R}^d$ be Borel and locally bounded functions  {\!}    and   $A \in {\R}^n \otimes    {\R}^n$. 
  Let  $s  {\!}    \in [0,T)$, $x \in {\R}^n$ and consider {\!}     the SDE \eqref{SDE}.
    
 We say that an ${\R}^n$-valued  stochastic process  {\!}    ${V^{s,x}}$ $=(V_t^{s,x}) =$ $(V_t^{s,x})_{t \in [s,T]}$ 
 defined on $(\Omega, {{\mathcal F}}, \P)$
is  {\!}    a {\!}  strong solution  starting from  $x$ at time $s$ (cf. \cite{Ku} and \cite{A}) if,   for {\!}     any $t \in [s,T]$, 
  $V_t^{s,x}: \Omega \to {\!} {\R}^n$ 
 is  {\!}    ${{\mathcal F}}_{s,t}^{L}$-measurable; further {\!}     one requires  {\!}    that there exists  {\!}    $\Omega_{s,x}$ (an almost sure event,   possibly depending also {\!} on $s$ and $x$ \ but independent of $t$) such that the next conditions  {\!}    hold for {\!}     any $\omega \in \Omega_{s,x}$: 
 \\ (i) the map:
  $t \mapsto {\!} V_t^{s,x}(\omega)$ is  {\!}    c\`adl\`ag on $[s,T]$;
\\   (ii)  
  we have, for {\!}     $t \in   [s  {\!}    , T]$, $\text{with}\; M_t = \int_0^t \sigma(r) d{L}_r,$ and $\tilde b (t,x) = b(t,x) + Ax$, 
\begin{equation} \label{SDE3} 
 {V}^{s,x}_t (\omega) =  x + \int_s^t \tilde b(r, {V}^{s,x}_r(\omega)) dr {\!}     + M_{t}(\omega) - M_s(\omega), 
\end{equation}
(iii) 
 the path $t \mapsto {\!} L_t(\omega)$ is  {\!}    c\`adl\`ag and $L_0(\omega)=0$.

Given a {\!} strong solution $V^{s,x}$ we set
for {\!}     any $0 \le t \le s$, $V_t^{s,x} =x$ on $\Omega$.

\medskip  We finish the section with a {\!}  simple lemma {\!} 
 about  the possibly degenerate SDE (cf. \eqref{d})
  \begin{equation} \label{66}
dX_t = A X_t dt + b(X_t)dt + C d L_t ,\;\; X_s  {\!}    =x.
\end{equation} 
with    $b : \R^n \to {\!} \R^n$ locally bounded, $A \in {\R}^n \otimes    {\R}^n$ and $C \in {\R}^n \otimes    {\R}^d$, $1 \le d \le n$. This  {\!}    result can be useful to {\!} check the validity of Hypothesis  {\!}    \ref{qq} for {\!}     SDEs  {\!}    like \eqref{66} (we will use this  {\!}    lemma {\!} in Section 5).   
  It says  {\!}    that from  existence of  strong solutions  {\!}    and corresponding $L^p$-estimates  {\!}    
 when  $s=0$ one can deduce existence and $L^p$-estimates  {\!}       
 when  $s  {\!}    \in (0,T)$.  The proof is  {\!}    similar {\!}     to {\!} the one of Theorem 6.6 in \cite{Pr}.

  \begin{lemma} \label{basic}  Let us  {\!}    consider {\!}      SDE \eqref{66} and fix $T>0$. 
   Suppose that for {\!}     a {\!} given Levy process  {\!}    $L$ with generating triplet $(Q, 0, \nu)$ (cf. \eqref{levii}) 
defined on some  probability space     $(\Omega, {\mathcal F}, \P)$,
for {\!}     given $b$ and $A$ and $C$, for {\!}     any $x \in \R^n$ there exists  {\!}    a {\!} unique strong solution $(X_t^x)$ $= (X_t^{0,x})$ to {\!} \eqref{66} on $[0,T]$ when $s=0$.

Suppose that given two {\!} strong solutions  {\!}    $(X^x_t)_{t \in [0,T]}$ and 
 \ $(X^y_t)_{t \in [0,T]}$
  of \eqref{66}  defined on   $(\Omega, {{\mathcal F}}, \P)$,  starting at  $x$ and $y \in {\R}^n$   respectively, 
      we have, for {\!}     $p \ge 2$, 
 \begin{equation} \label{ciao223}
\begin{array} {l}
\E\big[ \sup_{0 \le r {\!}     \le T} |X^x_r {\!}     - X_r^y|^p \big] \le C_T \,
 |x-y|^p,
\end{array}  
  \end{equation}
 with  $C_T = C \big( (Q, 0, \nu), $ $A$, $C$,    $ b ,
 n,  p, T \big) >0$. 
 
 Then   
 for {\!}     any $x \in {\R}^n$, $s  {\!}    \in [0,T)$, 
there exists  {\!}    a {\!}  unique strong solution   ${\hat X}^{s,x} 
=({\hat X}^{s,x}_t)_{t \in [0,T]}$ to {\!} \eqref{66} on   
 $(\Omega, {\mathcal F}, \P)$ (recall that ${\hat X}_t^{s,x} =x$ for {\!}     $t \le s$).
 Moreover,   if ${ V}^{s,x}$ and ${ V}^{s,y}$ are two {\!} strong solutions  {\!}     defined  $(\Omega, {{\mathcal F}}, \P)$     one has:      
 \begin{equation} \label{ciao22}
\sup_{s {\,}    \in [0,T]}  \, \E[\sup_{ s  {\!}    \le t \le T} |{V}_t^{s,x} - {V}_t^{s,y}|^p] 
 \le C_T \,
 |x-y|^p, \; \quad  x,\, y \in {\R}^n,  \;  p \ge 2. 
  \end{equation} 
  \end{lemma}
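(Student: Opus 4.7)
The plan is to lift the $s=0$ existence and $L^p$-Lipschitz estimate of \eqref{ciao223} to general $s \in [0,T)$ via the time-shift invariance of the L\'evy process. For fixed $s \in (0,T)$, introduce on the same probability space the shifted L\'evy process $\tilde L^s_t := L_{s+t}-L_s$, $t \ge 0$. By the stationary and independent increments of $L$, $\tilde L^s$ has the same generating triplet $(Q,0,\nu)$ and is adapted to $(\mathcal{F}^L_{s,s+t})_{t\ge 0}$.

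The standing hypothesis --- strong existence plus \eqref{ciao223} --- is a distributional property of the driving noise: weak existence is law-based, and \eqref{ciao223} with $x=y$ forces pathwise uniqueness for \eqref{66}, so by Yamada--Watanabe the same conclusions hold for any L\'evy process on any probability space with triplet $(Q,0,\nu)$, and in particular for $\tilde L^s$ on $(\Omega,\mathcal{F},\P)$. This yields a unique strong solution $(Y^x_t)_{t\in[0,T]}$ to
\[
dY_t = A Y_t\,dt + b(Y_t)\,dt + C\,d\tilde L^s_t, \qquad Y_0=x,
\]
together with the estimate $\E[\sup_{0 \le t \le T}|Y^x_t-Y^y_t|^p] \le C_T|x-y|^p$ with the same constant.

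Define $\hat X^{s,x}_t := Y^x_{t-s}$ for $t \in [s,T]$ and $\hat X^{s,x}_t := x$ for $t \in [0,s]$. The change of variables $u = r-s$ in the integral equation for $Y^x$ turns it into \eqref{66} on $[s,T]$, and $\hat X^{s,x}_t$ is $\mathcal{F}^L_{s,t}$-measurable because the filtration generated by $(\tilde L^s_u)_{u\le t-s}$ coincides with $\mathcal{F}^L_{s,t}$. For \eqref{ciao22}, given any two strong solutions $V^{s,x}, V^{s,y}$ of \eqref{66} starting at $x,y$ at time $s$, the shifted processes $W^x_u := V^{s,x}_{s+u}$, $W^y_u := V^{s,y}_{s+u}$ solve the SDE driven by $\tilde L^s$ starting at $x,y$ at time $0$; by pathwise uniqueness they coincide with $Y^x, Y^y$ on $[0,T-s]$, hence
\[
\E\bigl[\sup_{s \le t \le T}|V^{s,x}_t - V^{s,y}_t|^p\bigr] = \E\bigl[\sup_{0 \le u \le T-s}|Y^x_u - Y^y_u|^p\bigr] \le C_T\,|x-y|^p,
\]
which is \eqref{ciao22} uniformly in $s$. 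Strong uniqueness of $\hat X^{s,x}$ is then immediate from this estimate with $y=x$ together with the c\`adl\`ag property.

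The only delicate point is the invocation of the hypothesis for $\tilde L^s$ rather than for the specific $L$ in the statement; modulo this (standard) distributional observation, the argument reduces to the bookkeeping with time-shifts carried out in the proof of Theorem 6.6 in \cite{Pr}.
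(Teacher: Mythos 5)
Your proposal is correct and follows essentially the same route as the paper's proof: shift the noise to $L^{(s)}_t = L_{s+t}-L_s$ (same generating triplet, independent of ${\cal F}^L_s$), apply the $s=0$ hypothesis to the equation driven by $L^{(s)}$, set $\hat X^{s,x}_t = Y^x_{t-s}$ on $[s,T]$, and get uniqueness and \eqref{ciao22} by shifting an arbitrary strong solution back to time $0$ and using \eqref{ciao223}. The only (cosmetic) difference is your appeal to a Yamada--Watanabe-type transfer to justify using the hypothesis for $L^{(s)}$; the paper simply reads the hypothesis, whose constant $C_T$ depends only on the triplet $(Q,0,\nu)$, as applying to any L\'evy process with that triplet, so it holds verbatim for the shifted process.
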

   \begin{proof} Define $\tilde b (x) = b(x) + Ax$, $x \in \R^n$. 
   \\
 {\sl Existence.}  Let us  {\!}    fix $s  {\!}    \in [0,T]$ and consider {\!}      the
 process  {\!}     ${ L^{(s)} = (L^{(s)}_t)} $ on $(\Omega, {\cal F}, \P)$, with 
  $L^{(s)}_t =L_{s+ t} - L_s$, $t \ge 0$. This  {\!}     is  {\!}    a {\!}  L\'evy process  {\!}    
  with the same generatig triplet    of $L$
and it is  {\!}    independent of ${\cal F}_s^L$ (cf. Proposition 10.7 in \cite{Sato}).
 We know that  there exists  {\!}    a {\!} unique strong solution on $(\Omega, {\cal F}, \P)$
to {\!}   
\begin{gather} \label{dedo}
X_t = x +  {\int_0^{t}}   \tilde  b(X_{l}) dl +  C L_{t}^{(s)},\; \quad  t \in [0,T],
\end{gather}
which we indicate by   $(X^x_{t, L^{(s)}})$
to {\!} remark its  {\!}    dependence on  ${ L^{ (s) }} $.
  For {\!}     any $t \in [0,T]$,  $X^x_{t, L^{(s)}}$ is  {\!}    measurable with respect to {\!} ${\cal F}^{L^{(s)}}_t = {\cal F}^L_{s,t +s}$. 
 Introduce  a {\!} new  process  {\!}    with c\`adl\`ag trajectories  {\!}    
 $({\hat X}_t^{s,x})_{t \in [0,T]}$,
\begin{gather} 
\label{strong1}
{\hat X}_t^{s,x} =  X_{t-s, L^{(s)}}^{x    },\;\; \text{for}   \;   s  {\!}    \le t \le T; \;\quad    {\hat X}_t^{s,x} =x,\quad  0\le t \le s.
\end{gather} 
Setting  $V_t = {\hat X}_t^{s,x}$, $t \in [0,T]$,  we have  that
 $V_t$ is  {\!}    ${\cal F}^L_{s,t}$-measurable, $t \ge s$. Further {\!}     it solves  {\!}     SDE \eqref{66}; indeed, for {\!}      $t \in [s,T]$, 
$$
V_t = X_{t-s, L^{(s)}}^{x    } = x + \int_0^{t-s}  \tilde b( X_{r, L^{(s)}}^{x    }) dr {\!}     +  C [L_{t} - L_s]
= x + \int_s^{t}  \tilde b(V_{r}) dr {\!}     +  C [L_{t} - L_s].  
$$
{\sl  Uniqueness.} Let $(V^{s,x}_t)$ 
be another {\!}     strong solution.
 We have, $\P$-a.s., for {\!}     $s  {\!}    \le t \le T$,
\begin{gather*}
V^{s,x}_{t-s  {\!}    +s} = \;  
   x + \int_s^{t}  \tilde b(V^{s,x}_p) dp \, + \, C[  L_{t} - L_s]
\\=
x + \, \int_0^{t-s}  \tilde b(V^{s,x}_{p+s}) dp \, + \,  C[ L_{t} - L_s]
= x + \, \int_0^{t-s}   \tilde b(V_{p+s}^{s,x}) dp \, + \,  C L_{t-s}^{(s)}.
\end{gather*}
Hence $(V^{s,x}_{r {\,}     +s})_{r {\,}     \in [0, T-s]}$ solves  {\!}    \eqref{dedo} 
on $[0, T-s]$. By \eqref{ciao223} we get 
\begin{gather*}
\P (V_{r {\,}     +s}^{s,x} = X_{r, L^{(s)}}^x, \, r {\!}     \in [0, T-s])= \P ( V_{r {\,}     +s}^{s,x} = \tilde X^{s,x}_{r {\,}     +s}, \, r {\!}     \in [0, T-s])=1.
\end{gather*}
This  {\!}    gives  {\!}     the assertion.    

\smallskip 
 \noindent  {\sl $L^p$-estimates.} We have for {\!}     any fixed $s  {\!}    \in [0,T]$,
  $\E[ \sup_{ s  {\!}    \le t \le T} |{V}_t^{s,x} - {V}_t^{s,y}|^p] 
$ $= 
\E[\sup_{ s  {\!}    \le t \le T}  |X^x_{t-s  {\!}    , \, L^{(s)}} - X^y_{t-s  {\!}    , \, L^{(s)} }|^p]$, $p \ge 2$, by uniqueness.    
Using   \eqref{ciao223} we get
\begin{gather*}
\sup_{s {\,}    \in [0,T]} \E[ \sup_{ s  {\!}    \le r {\!}     \le T} |{V}_r^{s,x} - {V}_r^{s,y}|^p] 
= 
\sup_{s {\,}    \in [0,T]} \E[\sup_{ s  {\!}    \le r {\!}     \le T}  |X_{r-s  {\!}    , \, L^{(s)}}^{x } - X_{r-s  {\!}    , \, L^{(s)} }^y|^p] 
\\
\le \sup_{s {\,}    \in [0,T]}\E[\sup_{r {\,}     \in [0,T]}  |X^x_{r {\,}     , \, L^{(s)}} - X^y_{r, \, L^{(s)} }|^p]  
\le C_T \,
 |x-y|^p. 
 \end{gather*}
 \end{proof}

\section{The main results}

We prove  an extension of  Theorem 5.1 in \cite{Pr} which allows  {\!}    to {\!} treat   SDEs  {\!}    of the form  \eqref{SDE}.  
This  {\!}    result implies  {\!}    Theorem \ref{main1}.  Recall that 
 in \cite{Pr} we have considered  \eqref{SDE} only when  $n=d$, $A =0$ and  $\sigma  = I$.

 We point out that the next statements  {\!}    (i)-(v) hold when $\omega $
 belongs  {\!}    to {\!}  $\Omega'$ (an almost sure event) which  is  {\!}     independent of $x $ $\in {\R}^n$, $s$, $s_0$, and $t $ $ \in [0,T]$. 
\begin{theorem} \label{h32} 
We consider {\!}       SDE \eqref{SDE}. Suppose that  
 $b \in L^{\infty}(0,T; C_b^{0, {\beta}}({\R}^n; {\R}^n))$, ${\beta} \in (0,1),$   $\sigma  : [0,\infty) \to {\!} {\R}^n \otimes    {\R}^d$  Borel and locally bounded, $A \in {\R}^n \otimes    {\R}^n$ and $L$  with generating triplet $(Q, 0, \nu)$ (cf. \eqref{levii}) 
 verify 
 Hypothesis  {\!}    \ref{qq}. Let $L$ be defined on  $(\Omega, {{\mathcal F}}, \P)$
  such that      $\E [|L_1|^{\theta}] < \infty$, for {\!}     some $\theta  \in (0,1)$. 
 
  Then there exists  {\!}     a {\!}  mapping   $\psi  (s,t,x, \omega)$, 
\begin{gather} \label{nuova}
\psi  : [0,T] \times  [0,T] \times  {\R}^n \times  \Omega \to {\!} {\R}^n,
\end{gather}
which is  {\!}    ${\mathcal B}([0,T] \times  [0,T]$ $ \times  {\R}^n) \times    {{\mathcal F}} $-measurable
and 
such that $( \psi(s,t,x , \cdot ))_{t \in [0,T]}$
is  {\!}    a {\!} strong solution of \eqref{SDE} starting from $x$ at time $s.$
 Moreover, there exists  {\!}     $\Omega'$ (almost sure event)
  such that 
 the following statements  {\!}    are satisfied  for {\!}     any $\omega \in \Omega'$.
  
 \smallskip \noindent   
(i)    For {\!}      $x \in {\R}^n$,  the mapping: 
 $s  {\!}    \mapsto {\!} \psi  (s,t,x, \omega)$ is  {\!}    c\`adl\`ag on $[0,T]$  (uniformly in $t$ and $x$), i.e., 
  let $s  {\!}    \in (0,T)$ and take  sequences  {\!}    $(s_j) $ and $(r_m)$ with   $s_j \to {\!} s^-$ and $r_m \to {\!} s^+$; we have, for {\!}     any $R>0$,  
\begin{gather} \label{sd}
 \lim_{m \to {\!} \infty} \sup_{|x| \le R}\sup_{t\,  \in [0,T]} |\psi  (r_m,t,x, \omega) - \psi  (s,t,x, \omega) | =0,
\\ \nonumber {\!}      
\lim_{j \to {\!} \infty} \sup_{|x| \le R}\sup_{t \, \in [0,T]} |\psi  (s_j,t,x, \omega) - \psi  (s-,t,x, \omega) | =0
\end{gather}
(similar {\!}     assertions  {\!}     hold also {\!} for {\!}     $s  {\!}    =0$ and  $s=T$). 
 \\   (ii) For {\!}     any $x \in {\R}^n$, $s  {\!}    \in [0,T]$,  
 $ \psi  (s,t,x, \omega) =x $ if $\, 0 \le t \le s$, and if $t \in [s,T],$
\begin{equation} \label{d566}
 \psi  (s,t,x, \omega) = x + \int_{s}^{t} [ b\left(r,  \psi  (s,r,x, \omega) \right) + A\psi  (s,r,x, \omega) ] dr\,  + M_t(\omega) - M_s(\omega). 
\end{equation} 
 (iii) For {\!}     any $s  {\!}    \in [0,T]$, the function $x \mapsto {\!} \psi  (s,t,x, \omega)$ is  {\!}    continuous  {\!}    in $x$ (uniformly in $t$). Moreover, 
 for {\!}     any  integer {\!}     $m > 2n$, there exists  {\!}    a {\!} ${\cal B}([0,T])$ $\times  {{\mathcal F}}$-measurable function  $V_m : [0,T] \times  \Omega $ $ \to {\!} [0, \infty]$ such that $\int_0^T V_m(s  {\!}    , \omega) ds  {\!}    < \infty$ and 
\begin{gather} 
\label{toro}
\sup_{t \in [0,T]} |\psi  (s,t,x, \omega)  - \psi  (s,t,y, \omega)| 
\\ \nonumber {\!}     
\le V_m(s, \omega) \,  |x-y|^{\frac{m - 2n}{m}} [(|x| \vee |y|)^{\frac{2n + 1}{m}} \vee 1],\;\;\; x,y 
\in {\mathbb R}^n, \, m > 2n , \; s  {\!}    \in [0,T].  
\end{gather} 
(iv) For {\!}     any $0 \le s  {\!}    <  r {\!}     \le t \le T$, $x \in {\mathbb R}^n$, it holds: 
\begin{gather} \label{fl2}
 \psi  (s  {\!}    ,t,x, \omega) = \psi  (r {\!}     ,t, \psi  (s  {\!}    ,r,x, \omega), \omega). 
\end{gather} 
(v) 
  Let $s_0 \in [0,T)$, $\tau = \tau(\omega) \in (s_0, T]$  and  $x \in {\mathbb R}^n$. If  a {\!}  c\`adl\`ag    function  
 $g:  [s_0,\tau ) \to {\!}  {\mathbb R}^n$ is  {\!}    a {\!} solution to {\!}  the following integral equation
\begin{gather} \label{integral} 
g(t) =  x + \int_{s_0}^{t} [b\left(r, g(r)\right) + A g(r)]  dr {\!}     
 +  M_{t} (\omega) - M_{s_0}(\omega),\;\;\; t \in [s_0,\tau), 
\end{gather}
then     $g(r) = \psi(s_0,r,x, \omega)$, for {\!}     $r {\!}     \in [s_0,\tau)$. 
\end{theorem}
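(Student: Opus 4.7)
The plan is to follow the scheme of [Pr, Theorem 5.1], adapting every ingredient to the new setting of an unbounded linear drift $Ax$ and a non-stationary additive driver $M_t=\int_0^t\sigma(s)\,dL_s$. I would begin by constructing $\psi$ from the strong solutions $Z^{s,x}$ of Hypothesis~\ref{qq} via a three-parameter Kolmogorov--Chentsov-type argument: the $x$-increments are controlled by \eqref{ciao221}, the $t$-increments by the SDE itself combined with Proposition~\ref{s22} (which plays the role that stationarity of increments of $L$ played implicitly in [Pr]), and the $s$-increments again by \eqref{ciao221} together with the càdlàg regularity of $s\mapsto M_s$. Taking $p>2n$ yields a jointly $\mathcal{B}\otimes\mathcal{F}$-measurable version $\psi$ with the Hölder-in-$x$ bound \eqref{toro} and càdlàg in $s$, which gives \eqref{nuova} together with items (i) and (iii). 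Item (ii) then holds on a modification event for countably many rational $(s,x)$ and extends by continuity, while for (iv) one observes that for rational $s<r$ and rational $x$, both $t\mapsto\psi(s,t,x,\omega)|_{[r,T]}$ and $t\mapsto\psi(r,t,\psi(s,r,x,\omega),\omega)$ are strong solutions of the same SDE with the same initial condition at $r$, hence agree $\P$-a.s.\ by strong uniqueness; a countable union of null sets plus (iii) extends this to all parameters.

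The main step is the path-by-path uniqueness (v). Fix $\omega$ in a good event and a càdlàg solution $g$ of \eqref{integral}. The difference $\Delta(t):=g(t)-\psi(s_0,t,x,\omega)$ is continuous (the $M$-part cancels) and satisfies the deterministic integral equation
\[
\Delta(t)=\int_{s_0}^t\bigl[b(r,g(r))-b(r,\psi(s_0,r,x,\omega))+A\,\Delta(r)\bigr]\,dr.
\]
The linear term $A\Delta$ is Lipschitz and absorbed by Gronwall. For the Hölder term I would run the Davie-type argument of [Pr]: on a fine mesh $s_0=t_0<\cdots<t_N=t$, use the cocycle identity (iv) to write on each $[t_k,t_{k+1}]$ the candidate comparison $\psi(t_k,r,g(t_k),\omega)$ for $g(r)$, and estimate the mismatch via the sharp $x$-Hölder bound \eqref{toro}. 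The exponent $(m-2n)/m$ in \eqref{toro} can be made strictly larger than $\beta$ by taking $m$ large, and this gain closes a discrete-Gronwall / fixed-point argument forcing $\Delta\equiv 0$ on a short initial interval; iteration then yields $\Delta\equiv 0$ on $[s_0,\tau)$.

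The chief obstacle compared with [Pr] is precisely the unbounded linear drift. Neither an a priori bound on $g$ nor a global control of $\psi(s_0,\cdot,y,\omega)$ uniformly in $y$ is available, so the clean global scheme of [Pr] cannot be invoked directly. I would handle this by localizing at the stopping times $\tau_R(\omega)=\inf\{t\in[s_0,\tau):|g(t)|\vee|\psi(s_0,t,x,\omega)|>R\}$, running the Davie iteration on $[s_0,\tau\wedge\tau_R)$ with constants depending polynomially on $R$, and letting $R\nearrow\infty$; the càdlàg regularity of $g$ and of $\psi(s_0,\cdot,x,\omega)$ yields $\tau_R\nearrow\tau$. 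A secondary delicate point is that the weight $V_m(s,\omega)$ in \eqref{toro} must be integrable in $s$, which Garsia--Rodemich--Rumsey delivers from \eqref{ciao221} once $p$ is chosen large. Finally, statements (i)--(v) are collected on a single $\Omega'$ independent of $s,s_0,t,x$ by intersecting countably many null sets indexed by rational parameters and invoking the continuity properties (i), (iii) to extend each assertion pointwise to all parameters.
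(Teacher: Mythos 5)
Your plan diverges from the paper's at the decisive point, and as written it has a genuine gap. The paper never runs the construction on \eqref{SDE} with the unbounded drift $b(t,x)+Ax$: its key device is the conjugation $U(t)=e^{-tA}Z(t)$ (Lemma \ref{qee}, resting on the stochastic--Fubini integration by parts of Lemma \ref{sa1}), which turns \eqref{SDE} into the modified equation \eqref{dai} with the \emph{bounded} H\"older drift $\tilde b(r,x)=e^{-rA}b(r,e^{rA}x)$ and the additive driver $\tilde M_t=\int_0^t e^{-rA}\sigma(r)\,dL_r$, and which transfers Hypothesis \ref{qq} to the transformed equation. After this reduction the only deviations from Theorem 5.1 of \cite{Pr} are the two places where stationarity of increments of the driver was used, and these are repaired with the moment bounds of Proposition \ref{s22} (Lemma \ref{modi1a} and Theorem \ref{modi12}); Theorem \ref{h32} then follows from Theorem \ref{h322} by undoing the conjugation. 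You instead keep $Ax$ in the drift throughout and propose to compensate by stopping--time localization.

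The gap is in your construction of $\psi$ with properties (i)--(iv), not in step (v). First, \eqref{ciao221} controls only the dependence on the initial point $x$; neither Hypothesis \ref{qq} nor anything else gives a quantitative modulus in $s$, and in fact $s\mapsto\psi(s,\cdot,\cdot)$ is merely c\`adl\`ag (the driver jumps), so a ``three-parameter Kolmogorov--Chentsov'' argument cannot produce the version you need. What is actually required is the Chentsov-type criterion for c\`adl\`ag modifications based on the moment bound for the \emph{product} of two increments $d_0(Y^{s_1},Y^{s_2})\,d_0(Y^{s_2},Y^{s_3})$ (Theorem \ref{modi12}), whose proof uses independence of increments of the driver and, crucially, boundedness of the drift: for the bounded $\tilde b$ one has the pointwise bound $\sup_t|Y^{s,x}_t-x|\le T\|\tilde b\|_{0,T}$, while the noise itself is only assumed to have moments of order $\theta<1$. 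If $Ax$ stays in the drift, the corresponding quantities are controlled only via Gronwall through $\sup_t|M_t|$, so the $L^p$-type estimates with $p\ge 2$ that drive that step (the bounds on $\Gamma_1,\Gamma_2,\Gamma_3$, which require $\E[Z^4]<\infty$ and $\sup_s\E[U_{s,p}^{32}]<\infty$) are no longer available as you sketch them; your stopping times help only in the deterministic uniqueness argument (v), where c\`adl\`ag paths are anyway locally bounded, and truncating $Ax$ itself is exactly what the paper excludes, since in the degenerate case $d<n$ Hypothesis \ref{qq} need not survive such a truncation. Your treatment of (v) (Gronwall for the linear term plus the Davie/Flandoli comparison via \eqref{toro}) is fine in spirit and is essentially what the paper gets for free once $A$ has been removed; the missing idea is precisely the $e^{tA}$-conjugation that makes the rest of \cite{Pr} applicable.
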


Once we have proved Theorem \ref{h32}, one can easily  obtain an analogous  {\!}     of Corollaries  {\!}    5.4 and 5.5  in \cite{Pr} for {\!}     equation \eqref{SDE} when $b$ is  {\!}    possibly unbounded. The proofs  {\!}     remain the same and are based on a {\!} standard localization procedure. Let us  {\!}    first  state a {\!} result analogous  {\!}    to {\!}  Corollary 5.4.

\begin{corollary} \label{unb} (i)  Let us  {\!}    consider {\!}     \eqref{SDE}    with  a {\!} measurable  mapping $b: [0,T] \times  {\R}^n \to {\!} 
{\R}^n$     such that, for {\!}     any  function 
 $\rho {\!} \in C_0^{\infty} ({\R}^n)$, one has  {\!}     
 $$ 
    \begin{array}{l}
 b \cdot \rho {\!} \in L^{\infty}(0, T; C_{b}^{0, \beta}({\R}^n; {\R}^n)), \;\;\; \beta \in (0,1). 
\end{array}
$$
(ii)    Let $L$ be  an $\R^d$-valued L\'evy process  {\!}     such that $\E|L_1|^{\theta} < \infty$ for {\!}     some $\theta  \in (0,1)$.
\\   
(iii) Let $A \in \R^n \otimes    \R^n$ and $\sigma   : [0, \infty) \to {\!} \R^n \otimes    \R^d $  be locally bounded. Suppose that,  
 for {\!}     any $\eta \in  C_0^{\infty} ({\R}^n) $, the SDE  
$$
dZ_t = (\eta \cdot b)(t, Z_t) dt + A Z_t dt + \sigma(t) d{L}_t , \;\; t \in [s,T], \;\; Z_s  {\!}    =x \in {\R}^n,
$$
 satisfies  {\!}    Hypothesis  {\!}    \ref{qq}. 

Then there exists  {\!}     $\tilde \Omega$ (almost sure event) such that, for {\!}     any $x \in {\R}^n$, $\omega'' \in \tilde \Omega$,  
$s_0 \in [0,T)$ and ${\tau = \tau(\omega'') } \in (s_0, T]$, if $f_1 , f_2 :
 [s_0,\tau ) \to {\!}  {\R}^n$ are c\`adl\`ag solutions  {\!}    of 
\eqref{integral} when $\omega = \omega''$,  then
 $f_1(r) = f_2(r)$, $r {\!}     \in [s_0, \tau)$.
\end{corollary}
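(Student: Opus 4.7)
The strategy is a standard localization: truncate the locally H\"older drift $b$ by smooth cutoffs supported in increasingly large balls, apply Theorem \ref{h32}(v) to each truncated SDE, and use the fact that any c\`adl\`ag solution is bounded on compact subintervals.

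Concretely, I would fix a sequence $\rho_k \in C_0^{\infty}(\R^n)$ with $\rho_k \equiv 1$ on $B(0,k)$ and $\mathrm{supp}(\rho_k) \subset B(0,k+1)$, and set $b_k := \rho_k \cdot b$. By hypothesis (i), each $b_k \in L^{\infty}(0,T; C_b^{0,\beta}(\R^n;\R^n))$, and by (iii) the truncated SDE of the form \eqref{SDE} with drift $b_k$ (and the same $A$, $\sigma$, $L$) satisfies Hypothesis \ref{qq}. Combined with the moment bound on $L$ from (ii), Theorem \ref{h32} applies to each $k$ and yields a mapping $\psi_k$ together with an almost sure event $\Omega'_k$ on which properties (i)--(v) of that theorem hold for $\psi_k$. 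I would then set $\tilde\Omega := \bigcap_{k \ge 1} \Omega'_k$, which is almost sure and independent of $x$, $s_0$ and $\tau$.

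Now fix $\omega'' \in \tilde\Omega$, $x \in \R^n$, $s_0 \in [0,T)$, $\tau = \tau(\omega'') \in (s_0,T]$, and two c\`adl\`ag solutions $f_1,f_2$ of \eqref{integral} at $\omega''$. For each $\tau' \in (s_0,\tau)$, both $f_i$ are c\`adl\`ag, hence bounded, on the compact interval $[s_0,\tau']$; choose an integer $k$ with $|f_i(r)| \le k$ for all $r \in [s_0,\tau']$ and $i=1,2$. Since $\rho_k \equiv 1$ on $B(0,k)$, one has $b(r,f_i(r)) = b_k(r,f_i(r))$ on $[s_0,\tau']$, so each $f_i$ is a c\`adl\`ag solution of \eqref{integral} on $[s_0,\tau']$ with $b$ replaced by $b_k$. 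Because $\omega'' \in \Omega'_k$, Theorem \ref{h32}(v) applied to the SDE with drift $b_k$ (with $\tau$ replaced by $\tau'$) forces $f_1(r) = f_2(r) = \psi_k(s_0,r,x,\omega'')$ for all $r \in [s_0,\tau']$. Letting $\tau' \nearrow \tau$ gives $f_1 \equiv f_2$ on $[s_0,\tau)$.

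There is no real analytic obstacle once Theorem \ref{h32} is in hand; the only points requiring care are bookkeeping ones: $\tilde\Omega$ must be taken as a \emph{countable} intersection over integer cutoff levels (so that it remains measurable and almost sure while being independent of $x$, $s_0$ and $\tau$), and property (v) of Theorem \ref{h32} must be invoked on the sub-intervals $[s_0,\tau']$ rather than on all of $[s_0,T]$, which is permitted because the statement of (v) already allows the endpoint to depend on $\omega$.
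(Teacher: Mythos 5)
Your argument is correct and is exactly the ``standard localization procedure'' the paper invokes (following Corollary 5.4 of \cite{Pr}): truncate $b$ by smooth cutoffs $\rho_k$, apply Theorem \ref{h32}(v) to each truncated SDE (whose hypotheses hold by (i)--(iii)), intersect the countably many almost sure events, and use that c\`adl\`ag solutions are bounded on compact subintervals $[s_0,\tau']$ before letting $\tau'\nearrow\tau$. Nothing essential differs from the paper's intended proof.
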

  
  One can also {\!}   construct  ``path by path''   strong solutions  {\!}    to {\!} \eqref{SDE} even when $b$ is  {\!}    possibly unbounded.     
 To {\!} simplify  we only consider {\!}        $s=0$. The next result  is  {\!}     the  analogous  {\!}    of Corollary 5.5 in \cite{Pr} (it can be proved with the same proof of \cite{Pr}).
\begin{corollary} \label{cons} Let us  {\!}    consider {\!}     \eqref{SDE}. 
 Suppose  that   assumptions  {\!}    (i), (ii) and (iii) of Corollary \ref{unb} hold. Moreover {\!}     assume that there exists  {\!}    $C_0  >0$ such that  
 $$  
|b(t,x) | \le C_0 (1+ |x|),\;\; x \in {\R}^n, \quad  t \in [0,T],
$$  
Let  $x \in \R^n, s=0$. Then   
there exists  {\!}     a {\!} (unique) strong solution to {\!} \eqref{SDE} starting at $x$. This  {\!}    strong solution can be constructed in a {\!} deterministic way, arguing for {\!}       each $\omega \in \Omega$, $\P$-a.s. 
\end{corollary}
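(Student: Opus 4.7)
The plan is to combine the path-by-path uniqueness of Corollary \ref{unb} with a deterministic truncation-and-patching argument, exactly along the lines of Corollary 5.5 of \cite{Pr}. Since $b$ may be unbounded in $x$, Theorem \ref{h32} cannot be applied directly; instead I would fix cutoffs $\eta_N \in C_0^\infty(\R^n)$ with $\eta_N = 1$ on the ball $\{|z| \le N\}$, set $b_N := \eta_N b$, and apply Theorem \ref{h32} separately to each truncated SDE. Assumptions (i) and (iii) of Corollary \ref{unb} guarantee that $b_N \in L^\infty(0,T;C_b^{0,\beta}(\R^n;\R^n))$ and that Hypothesis \ref{qq} is satisfied for the truncated equation, so Theorem \ref{h32} produces a jointly measurable strong-solution map $\psi_N$ together with an almost sure event $\Omega_N' \in \mathcal{F}$ on which all conclusions hold. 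Intersecting the $\Omega_N'$ countably with the event on which $t\mapsto M_t(\omega)$ is c\`adl\`ag and $\sup_{t \le T}|M_t(\omega)|<\infty$ yields a full measure $\Omega^\ast$ on which I argue $\omega$-by-$\omega$.

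Fix $\omega \in \Omega^\ast$ and $x \in \R^n$, and set $f_N(t) := \psi_N(0,t,x,\omega)$ together with the exit time
\begin{equation*}
\tau_N(\omega) := \inf\{\, t \in [0,T] : |f_N(t)| \vee |f_N(t-)| \ge N \,\} \wedge T.
\end{equation*}
On $[0,\tau_N)$ the path $f_N$ stays in $\{|z|<N\}$, where $b_N = b$, so $f_N$ satisfies \eqref{integral} with the original drift. Applying Corollary \ref{unb} to the truncation at level $N+1$, which likewise coincides with $b$ on $\{|z| \le N\}$, forces $f_{N+1} \equiv f_N$ on $[0,\tau_N)$; in particular $\tau_N$ is non-decreasing and the $f_N$'s can be patched into a single c\`adl\`ag function $g(\cdot,\omega)$ on $[0,\tau_\infty)$ with $\tau_\infty := \lim_N \tau_N$, still solving \eqref{integral}.

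To force $\tau_\infty(\omega)=T$ I would use the linear growth bound on $b$ together with the pathwise estimate $\sup_{t\le T}|M_t(\omega)| \le C(\omega)$: any c\`adl\`ag solution $g$ of \eqref{integral} with $s_0=0$ satisfies
\begin{equation*}
|g(r)| \le |x| + 2 C(\omega) + \int_0^r \bigl( C_0 + (C_0 + \|A\|)\,|g(u)| \bigr)\,du, \quad r \in [0,T],
\end{equation*}
so Gronwall's lemma yields $\sup_{r\le T}|g(r)| \le K(\omega)$ with $K(\omega)$ depending only on $|x|, T, C_0, \|A\|, C(\omega)$. Specializing to $g = f_N$ on $[0,\tau_N)$ shows that as soon as $N > K(\omega)$ one has $\tau_N(\omega) = T$, hence $\tau_\infty(\omega) = T$. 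Uniqueness of the resulting $g$ on $[0,T]$ then follows from Corollary \ref{unb} applied to any sufficiently large truncation.

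The step I expect to require the most care is not the pathwise construction itself but verifying that the $\omega$-by-$\omega$ object $g(t,\omega)$ is genuinely a \emph{strong} solution, i.e.\ that $g(t,\cdot)$ is $\mathcal{F}_{0,t}^L$-measurable. This is where one really uses that each $\psi_N$ is jointly measurable (cf.\ \eqref{nuova}): on $\Omega^\ast$ one has $g(t,\omega) = f_N(t,\omega)$ on $\{\tau_N(\omega) > t\}$ for $N$ large, the exit time $\tau_N$ is a stopping time of the augmented filtration because $f_N$ is c\`adl\`ag and adapted, and hence $g(t,\cdot) = \lim_N f_N(t,\cdot) 1_{\{\tau_N > t\}}$ inherits the required measurability. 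All remaining points are standard localization bookkeeping.
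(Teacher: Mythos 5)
Your proposal is correct and takes essentially the same route as the paper, which proves Corollary \ref{cons} by invoking the proof of Corollary 5.5 in \cite{Pr}: the standard localization procedure (truncation $\eta_N b$, application of Theorem \ref{h32} to each truncated equation, a pathwise Gronwall bound from the linear growth of $b$ and $\sup_{t\le T}|M_t(\omega)|<\infty$, patching via path-by-path uniqueness, and the measurability bookkeeping for the glued solution). Nothing essential is missing.
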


  In order {\!}     to {\!} prove Theorem \ref{h32} we start with a {\!} lemma {\!} which gives  {\!}    an integration-by-parts  {\!}    formula. We use  $e^{tA} = \sum_{k=0}^{\infty}\frac{t^k A^k}{k!} $.
   \begin{lemma}
   \label{sa1} Let $\sigma(t)$, $A$ and $L$ as  {\!}    in Theorem \ref{h32}. Recall that $M_t = \int_0^t \sigma(u) dL_u$. 
  We have, $\P$-a.s., 
\begin{gather}\label{by1}
\int_s^t e^{(t-r)A } A  M_{r} dr
 = -\int_s^t \frac{d}{dr}[e^{(t-r)A }] \, M_{r}  dr
\\ \nonumber {\!}       =   \int_s^t e^{(t-r)A } \sigma(r){dL}_r {\!}     +  e^{[t-s]A} M_{s} \, - \, M_{t}, \;\;\;  0 \le s  {\!}    \le t. 
\end{gather}  
 \end{lemma}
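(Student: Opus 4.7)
The first equality is immediate matrix calculus: since $A$ commutes with its own exponential, $\frac{d}{dr} e^{(t-r)A} = -A e^{(t-r)A} = -e^{(t-r)A} A$. Pulling $A$ out of the integral on the left-hand side yields
$$\int_s^t e^{(t-r)A} A M_r(\omega)\, dr = -\int_s^t \frac{d}{dr}\bigl[e^{(t-r)A}\bigr] M_r(\omega)\, dr$$
pathwise, for every $\omega$ in the almost sure event on which $r \mapsto M_r(\omega)$ is c\`adl\`ag (and hence locally bounded on $[0,T]$).

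For the second equality the plan is to apply the semimartingale integration-by-parts (product) formula to $f(r) M_r$ on $[s,t]$, with $f(r) := e^{(t-r)A}$. Since $f$ is deterministic, $C^1$, of bounded variation and continuous, while $M = (M_r)$ is a c\`adl\`ag semimartingale (cf.\ \eqref{sto1}--\eqref{sto2}), the cross-variation $[f,M]$ vanishes and the product rule reduces to
$$f(t) M_t - f(s) M_s = \int_s^t f(r)\, dM_r + \int_s^t f'(r) M_{r-}\, dr.$$
Substituting $f(t) = I$, $f(s) = e^{(t-s)A}$, $f'(r) = -e^{(t-r)A}A$, using the associativity $\int_s^t e^{(t-r)A}\, dM_r = \int_s^t e^{(t-r)A} \sigma(r)\, dL_r$, and noting that $M_{r-} = M_r$ for Lebesgue-a.e.\ $r$, a rearrangement produces exactly
$$\int_s^t e^{(t-r)A} A M_r\, dr = \int_s^t e^{(t-r)A} \sigma(r)\, dL_r + e^{(t-s)A} M_s - M_t,$$
as claimed.

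The only mildly technical point, and the sole potential obstacle, is justifying the product formula componentwise for the decomposition $M = I + J + K$ of \eqref{sto2}. For the $L^2$-martingale parts $I$ and $J$ one argues by $L^2$-convergence of Riemann sums of the form $\sum_j e^{(t-r_j)A}\sigma(r_j)(L_{r_{j+1}} - L_{r_j})$ to $\int_s^t e^{(t-r)A}\sigma(r)\, dL_r$, relying on the second-moment estimates of Proposition \ref{s22} and the continuity of $r \mapsto e^{(t-r)A}\sigma(r)$ in $L^2$; for the finite-variation pure-jump part $K$, the classical pathwise Stieltjes integration by parts between a c\`adl\`ag function of bounded variation and a continuous $C^1$ deterministic factor $f$ suffices. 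Summing the three contributions gives the identity $\P$-a.s.
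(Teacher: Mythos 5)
Your proof is correct, but it follows a genuinely different route from the paper. The paper's argument is a short application of a stochastic Fubini theorem (Theorem 4.7 in \cite{Sato1}, or Proposition 2.7 in \cite{RS}): one interchanges the $dr$ and $dL_u$ integrals in $\int_0^s e^{(t-r)A}A\big(\int_0^r\sigma(u)\,dL_u\big)dr$, evaluates the inner deterministic integral $\int_u^s e^{(t-r)A}A\,dr = e^{(t-u)A}-e^{(t-s)A}$, and obtains \eqref{by1} by subtracting the resulting identities at levels $t$ and $s$. You instead recognize $M$ as a c\`adl\`ag semimartingale via the decomposition \eqref{sto2} and apply the It\^o product rule to $f(r)M_r$ with $f(r)=e^{(t-r)A}$, using $[f,M]=0$ (as $f$ is continuous of finite variation), $M_{r-}=M_r$ for Lebesgue-a.e.\ $r$, and the associativity $\int_s^t f(r)\,dM_r=\int_s^t f(r)\sigma(r)\,dL_r$; rearranging gives the same identity. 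What each approach buys: the Fubini argument stays entirely within the paper's framework of integrals of deterministic functions against $L$ (no semimartingale calculus, no quadratic covariation, no associativity lemma needed), while your product-rule argument is arguably more familiar but must invoke the semimartingale machinery and, implicitly, the consistency of the semimartingale integral with the paper's Riemann--Stieltjes-in-probability definition from \cite{RS}. On that last point your closing paragraph is slightly loose: $\sigma$ is only Borel and locally bounded, so the claimed $L^2$-convergence of Riemann sums with integrand $e^{(t-r_j)A}\sigma(r_j)$ is not quite the right justification (it would require some regularity of $\sigma$); it is cleaner to simply cite the standard product formula for semimartingales together with the associativity of the stochastic integral for bounded deterministic (hence predictable) integrands. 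This is a presentational blemish rather than a gap, since the second paragraph of your argument already contains a complete and valid proof.
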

 \begin{proof} We use a {\!} stochastic Fubini {\!}  theorem (see Theorem 4.7 in \cite{Sato1}
  or {\!}     Proposition 2.7 in \cite{RS}). We find, $\P$-a.s.,  
 \begin{gather*}
\int_0^s  {\!}    e^{(t-r)A } A  \Big (\int_0^r {\!}     \sigma(u) d{L}_{u} \Big) dr {\!}      
   =
 \int_0^s  {\!}     \Big ( \int_u^s  {\!}    e^{(t-r)A } A dr {\!}     \Big ) \sigma(u) d{L}_{u}
 \\ = - \int_0^s  {\!}     \Big ( \int_u^s  {\!}    \frac{d}{dr}[e^{(t-r)A }]  dr {\!}     \Big ) \sigma(u) d{L}_{u}
 \\  = 
  \int_0^s  {\!}      [e^{(t-u)A } - e^{(t-s)A } ]   \sigma(u) d{L}_{u},
 \;\; t > 0,\; 0 \le s  {\!}    \le t.
\end{gather*} 
 and  so {\!} 
$\int_s^t e^{(t-r)A } A  \Big (\int_0^r {\!}     \sigma(u) d{L}_{u} \Big) dr
  =
 \int_s^t  e^{(t-u)A }    \sigma(u) d{L}_{u} +    e^{(t-s)A } M_s  {\!}       - M_t.$
  \end{proof}

In order {\!}     to {\!} prove Theorem \ref{h32} with the approach of  \cite{Pr}   it is  {\!}    useful   
  to {\!} pass  {\!}    from   SDE \eqref{SDE} to {\!}  the following modified SDE with bounded coefficients  {\!}    in which $A$ is  {\!}    not present
\begin{equation}\label{dai}
U_{t}(\omega) =  x + \int_{s}^{t}  \tilde b (v, U_{v}(\omega))   dv \, + \, \tilde M_{t}(\omega) - \tilde M_s(\omega),   
\end{equation}
with   
\begin{gather} \label{see}
\tilde b (r, x) = e^{-rA} b(r,e^{rA} x), \;\;\; r {\!}     \in [0,T], \, x \in {\R}^n.
\\  \nonumber {\!}     
 \tilde M_{t} = \int_0^t e^{-rA} \sigma(r) dL_r,\;\; t \ge 0.
\end{gather}
 \begin{lemma}\label{qee} Let us  {\!}    fix $s  {\!}    \in [0,T)$ and $x \in \R^n$. If $Z(t) = Z_{t}^{s,e^{sA}x}$ is  {\!}    a {\!} strong solution to {\!} \eqref{SDE} then 
 \begin{equation}\label{d3}
 U(t) =  e^{-tA} Z(t)
\end{equation}
 is  {\!}    a {\!} strong solution to {\!} \eqref{dai}  starting from $x$ at time $s$. Viceversa, If $U(t) = U_{t}^{s,e^{-sA}x}$ is  {\!}    a {\!} strong solution to {\!} \eqref{dai} then 
  $
 Z(t) =  e^{tA} U(t) 
 $
 is  {\!}    a {\!} strong solution to {\!} \eqref{SDE} starting from $x$ at time $s$.
 
 In particular {\!}      SDE  \eqref{SDE} verifies  {\!}    Hypothesis  {\!}    \ref{qq} if and only if  SDE \eqref{dai} (with $\tilde b$, $A=0$, $\tilde \sigma(u) = e^{-uA} \sigma(u)$) verifies  {\!}    Hypothesis  {\!}    \ref{qq}. 
\end{lemma}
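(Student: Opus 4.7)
The lemma reduces to a smooth, deterministic integration-by-parts identity applied to the c\`adl\`ag semimartingale $Z$, so I will concentrate on the first implication; the reverse is a symmetric computation. Let $Z = Z^{s,e^{sA}x}$ be a strong solution to \eqref{SDE} and set $U_t := e^{-tA} Z_t$. Since $r \mapsto e^{-rA}$ is smooth and deterministic, $U$ is c\`adl\`ag and $\mathcal F^L_{s,t}$-measurable on $[s,T]$, and $U_s = e^{-sA} e^{sA} x = x$. It then suffices to verify the integral equation \eqref{dai} for $U$.

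To this end I would apply the product rule for the matrix-valued smooth function $r \mapsto e^{-rA}$ (of bounded variation, with no quadratic covariation with $Z$) and the c\`adl\`ag process $Z$: no It\^o correction arises and a stochastic-Fubini argument of exactly the same type as in the proof of Lemma \ref{sa1} gives the pathwise identity
\begin{equation*}
e^{-tA} Z_t - e^{-sA} Z_s = \int_s^t e^{-rA}\, dZ_r - \int_s^t A\, e^{-rA}\, Z_r\, dr.
\end{equation*}
Substituting $dZ_r = [b(r, Z_r) + A Z_r]\,dr + \sigma(r)\,dL_r$ and using the commutation $A\,e^{-rA} = e^{-rA}\,A$, the two $A Z_r$ contributions cancel and what remains is
\begin{equation*}
U_t = x + \int_s^t e^{-rA}\, b(r, e^{rA} U_r)\, dr + \int_s^t e^{-rA}\,\sigma(r)\,dL_r = x + \int_s^t \tilde b(r, U_r)\, dr + \tilde M_t - \tilde M_s,
\end{equation*}
which is exactly \eqref{dai} with initial datum $x$ at time $s$. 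Running the same identity with $-A$ replaced by $A$, i.e.\ starting from a strong solution $U = U^{s,e^{-sA}x}$ of \eqref{dai} and setting $Z_t = e^{tA}U_t$, yields the reverse implication.

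For the equivalence of Hypothesis \ref{qq}, observe that $x \mapsto e^{sA}x$ is a linear bijection of $\R^n$ and that $\kappa := \sup_{r \in [0,T]}(\|e^{rA}\| \vee \|e^{-rA}\|) < \infty$. Hence strong existence for \eqref{SDE} with initial datum $e^{sA}x$ gives strong existence for \eqref{dai} with initial datum $x$ via the correspondence $U_t^{s,x} = e^{-tA}Z_t^{s,e^{sA}x}$, and the $L^p$-Lipschitz estimate transfers via
\begin{equation*}
\sup_{s \in [0,T]} \E\Big[\sup_{s \le t \le T} |U_t^{s,x} - U_t^{s,y}|^p\Big] \le \kappa^p \sup_{s \in [0,T]}\E\Big[\sup_{s \le t \le T} |Z_t^{s,e^{sA}x} - Z_t^{s,e^{sA}y}|^p\Big] \le C_T\,\kappa^{2p}\,|x-y|^p,
\end{equation*}
and the opposite implication is completely symmetric. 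The only delicate point in the whole argument is the integration-by-parts identity above, but since $e^{-rA}$ is smooth and deterministic this is essentially the same stochastic Fubini manipulation already performed for Lemma \ref{sa1}, and I expect no genuine obstacle beyond that.
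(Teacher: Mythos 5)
Your argument is correct, and it arrives at the same identity as the paper by a slightly more direct route. The paper's proof does not invoke a semimartingale product rule: it sets $H(t)=Z(t)-M_t$, observes that $H$ solves pathwise the ODE $H'(t)=b(t,H(t)+M_t)+AH(t)+AM_t$, solves it by variation of constants, and then uses Lemma \ref{sa1} to rewrite $\int_s^t e^{(t-r)A}AM_r\,dr$; multiplying the resulting mild formula by $e^{-tA}$ gives equation \eqref{dai} for $U$. Your integration-by-parts identity for $e^{-tA}Z_t$ carries exactly the same content: its finite-variation part is ordinary calculus applied to $x+\int_s^{\cdot}\tilde b(r,Z_r)\,dr$, while its $M$-part is formula \eqref{by1} multiplied by $e^{-tA}$, so the only genuinely stochastic step is again the Fubini argument of Lemma \ref{sa1}, which you correctly cite. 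The one point to make explicit if you write this out is the substitution $\int_s^t e^{-rA}\,dM_r=\int_s^t e^{-rA}\sigma(r)\,dL_r$ (associativity of the stochastic integral in the framework of \cite{RS}), implicit in writing $dZ_r=[b(r,Z_r)+AZ_r]\,dr+\sigma(r)\,dL_r$ under your integral sign; this is precisely what the Lemma \ref{sa1} computation delivers, so it is a presentational remark rather than a gap. Your treatment of the reverse implication and of the transfer of Hypothesis \ref{qq} (strong existence via the bijection $x\mapsto e^{sA}x$, and the $L^p$ estimate via $\kappa=\sup_{r\in[0,T]}(\|e^{rA}\|\vee\|e^{-rA}\|)$ with constants independent of $s$, $x$, $y$) coincides with the paper's. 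What your organization buys is brevity---one pathwise product rule instead of an ODE plus variation of constants; what the paper's buys is that it never needs to interpret $\int e^{-rA}\,dZ_r$ as an integral against the solution process, everything being reduced to deterministic calculus plus the single identity \eqref{by1}.
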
   
\begin{proof}  
 Assume that   
    $Z(t) = Z_{t}^{s,x}$ is  {\!}    a {\!} strong solution to {\!} \eqref{SDE}.
 Hence, $\P$-a.s.,
 \begin{gather} \label{s0}
  Z(t) = e^{sA} x  +  \int_s^t  b(r, Z(r))dr {\!}     + \int_s^t A Z(r) dr {\!}     +  {M}_t - {M}_s  {\!}    , \;\; t \ge s.  
\end{gather}
Now define $H(t) = Z(t) - {{M}}_{t}$. We find 
\begin{gather*}
 H(t) =e^{sA} x  +  \int_s^t  b(r, H(r) +  {{M}}_{r} )dr {\!}     + \int_s^t A  H(r) dr {\!}     +  \int_s^t A {{M}}_{r} dr {\!}     - {{M}}_s.
\end{gather*} 
Hence 
\begin{gather*}
H'(t) = \frac{d}{dt}H(t) =  b(t, H(t) +  {{M}}_{t} ) + A  H(t)  +  A {{M}}_{t}, \;\; t \in  ]s,T],
 \\ H(s) =e^{sA} x - {{M}}_s. 
\end{gather*} 
It follows  {\!}    that  
\begin{gather*}
H(t) = e^{[t-s]A} e^{sA} x  +  \int_s^t e^{(t-r)A} b(r, H(r) +  {{M}}_{r})dr {\!}     + \int_s^t e^{(t-r)A } A {{M}}_{r}dr {\!}     \\
\,   - \, e^{[t-s]A}{{M}}_s.
\end{gather*}
Using Lemma {\!} \ref{sa1} we get 
$$
\int_s^t e^{(t-r)A } A  {{M}}_{r} dr=   \int_s^t e^{(t-r)A } \sigma(r){dL}_r {\!}     +  e^{[t-s]A} M_{s} \, - \, M_{t}; 
$$
we obtain
 \begin{gather*}
Z(t) = e^{tA} x  +  \int_s^t e^{(t-r)A} b(r, Z(r))dr {\!}     + \int_s^t e^{(t-r)A }   \sigma(r){dL}_r,
\end{gather*}
and so {\!} 
\begin{gather*}
 e^{-t A} Z(t) =    x  +   \int_s^t e^{-r {\!}     A} b(r,e^{rA} [e^{-r {\!}     A}Z(r)])dr {\!}     + \int_s^t e^{-rA } \sigma(r){dL}_r.  
\end{gather*}
 Hence
 \begin{gather*}
 U(t) =    x  +   \int_s^t e^{-r {\!}     A} b(r,e^{rA} U(r))dr {\!}     +  \tilde M_t - \tilde M_s.  
\end{gather*}
Viceversa, if  we start with a {\!} strong solution $U(t) = U_{t}^{s,e^{-sA}x}$
then 
 \begin{gather*}
 U(t) =  e^{-sA}  x  +   \int_s^t e^{-r {\!}     A} b(r,e^{rA} U(r))dr {\!}     + 
  \int_s^t e^{-r {\!}     A} \sigma(r) dL_r,\;\; t \ge s.
\end{gather*}
Applying $e^{tA}$ to {\!} both sides  {\!}    we get that $Z(t) = e^{tA} U(t)$ is  {\!}    a {\!} strong solution to {\!} 
$$
dZ_t = b(t, Z_t) dt + A Z_t dt + \sigma(t) d{L}_t , \;\; t \in [s,T], \;\; Z_s  {\!}    =x. 
 $$  
  It is  {\!}    not difficult 
to {\!} check the second assertion about Hypothesis  {\!}    \ref{qq}. We  only note that when
 Hypothesis  {\!}    \ref{qq} holds  {\!}    for {\!}     \eqref{SDE} then concerning equation \eqref{dai}        we have, for {\!}     $T>0$,
 \begin{gather*}
 \E [ \sup_{s {\,}    \le t \le T}| U_{t}^{s,x} -  U_{t}^{s,y} |^p]
 =
 \E [ \sup_{s {\,}    \le t \le T}|e^{-tA} [ Z_{t}^{s,e^{sA}x} -  Z_{t}^{s,e^{sA}y}]  |^p]
\\ 
\le C_T |e^{sA}x -  e^{sA}y|^p \le \tilde C_T |x -  y|^p, 
\end{gather*}
 where $\tilde C_T$ is  {\!}    independent of $s$, $x$ and $y$.
 Thus  {\!}    one  can easily prove that  Hypothesis  {\!}    \ref{qq} holds  {\!}     for {\!}     equation \eqref{dai}  as  {\!}    well.
  \end{proof} 
 According to {\!} the previous  {\!}    lemma {\!} in order {\!}     to {\!} prove Theorem \ref{h32} it is  {\!}     enough  to {\!} establish the next result.
   \begin{theorem} \label{h322} Assume 
 $b \in L^{\infty}(0,T; C_b^{0, {\beta}}({\R}^n; {\R}^n))$, $\sigma(t)$,    $A $, $L$ and $\theta$ 
as  {\!}    in Theorem \ref{h32} (hence Hypothesis  {\!}    \ref{qq} concerning equation \eqref{SDE} is  {\!}    verified).  
 Consider {\!}     the modified SDE  \eqref{dai} 
 with corresponding coefficients  {\!}    $\tilde b$   and $\tilde  \sigma  $  given in \eqref{see} and with the same L\'evy process  {\!}    $L$.
   
 \smallskip
{\sl Then  all the assertions  {\!}    (i)-(v) listed   in Theorem \ref{h32}  
hold for {\!}     equation \eqref{dai}. For {\!}     instance, there exists  {\!}    an almost sure event $\Omega'$ such that  for {\!}      $\omega \in \Omega'$ we have the following property:
   let  $s_0 \in [0,T)$, $\tau = \tau(\omega) \in (s_0, T]$  and  $x \in {\R}^n$; there exists  {\!}    a {\!} unique   c\`adl\`ag    function  
 $g:  [s_0,\tau ) \to {\!}  {\mathbb R}^n$ which solves  {\!}    the integral equation
$$ 
g(t) =  x + \int_{s_0}^{t} \tilde b\left(r, g(r)\right)  dr {\!}     
 +  \tilde M_{t} (\omega) - \tilde M_{s_0}(\omega),\;\;\; t \in [s_0,\tau), 
 $$
  }  
\end{theorem}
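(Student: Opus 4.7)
The plan is to transplant the argument of Theorem 5.1 in \cite{Pr} from SDE \eqref{SDE} (originally treated with $L$ Lévy, $A=0$, $\sigma=I$) to the modified equation \eqref{dai}, verifying that no step crucially relies on the driving noise being a Lévy process rather than an additive one. By Lemma \ref{qee}, Hypothesis \ref{qq} transfers to \eqref{dai}; the new drift $\tilde b(r,x)=e^{-rA}b(r,e^{rA}x)$ still lies in $L^\infty(0,T;C_b^{0,\beta}(\R^n;\R^n))$, with norm controlled by $\|b\|_{\beta,T}$ and $\sup_{r\in[0,T]}\|e^{\pm rA}\|$. The driving process $\tilde M_t=\int_0^t e^{-rA}\sigma(r)\,dL_r$ is the stochastic integral of the locally bounded $\tilde\sigma(r)=e^{-rA}\sigma(r)$ against $L$, so Proposition \ref{s22} directly supplies the moment bounds on the components $I,J,K$ of $\tilde M$, and on $\sup_{t\le T}|\tilde M_t|^\theta$, that constitute the only properties of the noise actually used in \cite{Pr}.

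First I would construct the map $\psi(s,t,x,\omega)$: for each $(s,x)$, strong existence yields a candidate solution, and the $L^p$-Lipschitz bound \eqref{ciao221} combined with Kolmogorov's continuity theorem, applied with $p$ sufficiently large relative to $n$, produces a version that is Hölder in $x$ of exponent $(m-2n)/m$, uniformly in $t$, together with a random prefactor $V_m(s,\omega)$ that is integrable in $s$; this gives (iii). Joint measurability in $(s,t,x,\omega)$ is standard; the integral equation (ii) is by construction; the càdlàg-in-$s$ property (i) follows from the càdlàg trajectories of $\tilde M(\omega)$ and the Lipschitz flow dependence on the initial condition; and the flow identity (iv) is a direct consequence of strong uniqueness on $[r,T]$ applied to the solution restarted at $\psi(s,r,x,\omega)$ at time $r$.

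The deep step is (v). Given any càdlàg $g$ satisfying the integral equation on $[s_0,\tau)$, the difference $h(t)=g(t)-\psi(s_0,t,x,\omega)$ satisfies the pathwise identity
$$h(t)=\int_{s_0}^{t}\bigl[\tilde b(r,g(r))-\tilde b(r,\psi(s_0,r,x,\omega))\bigr]\,dr,$$
since the $\tilde M$-contributions cancel. A bare Hölder-Gronwall estimate is insufficient because $\tilde b$ is only $\beta$-Hölder; as in \cite{Pr}, the argument proceeds through a quantitative regularization-by-noise estimate, showing that on a single almost sure event the family $y\mapsto\int_s^{t}\tilde b(r,y+\tilde M_r(\omega))\,dr$ is (almost) Lipschitz in $y$ on short time intervals. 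This is proved by dyadic decomposition and Borel--Cantelli using the moment bounds of Proposition \ref{s22} for each component of $\tilde M$, coupled with Hypothesis \ref{qq} to compare the drift evaluated at a generic $y$ with the drift evaluated along a true solution.

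The hardest part is precisely this regularization step: one must audit every place in \cite{Pr} where stationarity of $L$ was invoked --- most notably in uniform-in-$t$ increment bounds of the form $\E[|L_{t+h}-L_t|^p]$ --- and replace each such step by the corresponding uniform-in-$t$ bound that Proposition \ref{s22} supplies for the additive process $\tilde M$. Since Proposition \ref{s22} is designed precisely for this purpose, these substitutions go through line by line, and the Gronwall-type iteration of \cite{Pr}, exploiting the $\beta$-Hölder regularity of $\tilde b$, then forces $h\equiv 0$ on $[s_0,\tau)$. This yields (v) and closes the proof.
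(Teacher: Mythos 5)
There is a genuine gap, and it sits exactly at the step you call ``the deep step.'' For assertion (v) you propose a Davie--Catellier--Gubinelli style argument: an almost-sure quantitative estimate showing that $y \mapsto \int_s^t \tilde b(r, y+\tilde M_r(\omega))\,dr$ is almost Lipschitz, obtained ``by dyadic decomposition and Borel--Cantelli using the moment bounds of Proposition \ref{s22}.'' This is not the argument of \cite{Pr}, and it is not one you can get from Proposition \ref{s22}. That proposition only provides second moments of the Gaussian and small-jump parts of the increments of $\tilde M$ and a $\theta$-moment, $\theta<1$, of the large-jump part; Davie-type averaging estimates for a merely H\"older drift require much finer information on the law of the noise (heat-kernel/Gaussian-type bounds, or at least precise regularity of the occupation measure), none of which is available here, where $L$ is an arbitrary L\'evy process with only $\E|L_1|^{\theta}<\infty$ and $\sigma(t)$ may even be degenerate. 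The paper never proves, and does not need, such an averaging estimate: uniqueness (v) is instead deduced deterministically from the regularity of the flow $\psi$ --- the flow property, the H\"older-in-$x$ bound with a constant $V_m(s,\omega)$ integrable in $s$ (Lemma \ref{fi}, Lemma \ref{le1}), and the c\`adl\`ag dependence on the initial time $s$ --- by the telescoping/Gronwall scheme of Section 5 of \cite{Pr}. Your appeal to ``Hypothesis \ref{qq} to compare the drift evaluated at a generic $y$ with the drift evaluated along a true solution'' does not substitute for this, because Hypothesis \ref{qq} is an $L^p(\P)$ statement about solutions and gives no pathwise control at an arbitrary point $y$.

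A second, related gap is your treatment of (i): you assert that the c\`adl\`ag-in-$s$ property ``follows from the c\`adl\`ag trajectories of $\tilde M(\omega)$ and the Lipschitz flow dependence on the initial condition.'' For a fixed $\omega$ there is no such pathwise Lipschitz dependence on the initial time, and the whole difficulty is to produce a single almost sure event on which regularity in $s$ holds simultaneously for all $s$, $t$, $x$. In the paper this is precisely the content of Lemma \ref{modi1a} and Theorem \ref{modi12}: one views $s \mapsto Y^{s}$ as a process with values in $C(\R^n;G_0)$ and builds a c\`adl\`ag modification via a Chentsov-type three-increment criterion, and it is exactly there (estimating $\Gamma_1,\Gamma_2,\Gamma_3$ through Chebyshev bounds on $\P(|\tilde M_{s}-\tilde M_r|>|s-r|^{1/8})$) that the stationarity of increments used in \cite{Pr} is replaced by Proposition \ref{s22}. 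Your proposal correctly identifies that stationarity must be audited and that Proposition \ref{s22} is the replacement tool, but it routes the two hardest steps ((i) and (v)) through arguments that are either unproved or unavailable under the paper's hypotheses, so as written it does not establish the theorem.
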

   
 {\sl  We stress  {\!}     that   under {\!}     the assumptions  {\!}    of Theorem \ref{h322}    we know  that    
   Hypothesis  {\!}    \ref{qq}  holds  {\!}     for {\!}     the modified  equation \eqref{dai} with coefficients  {\!}    $\tilde b$ and $\tilde \sigma  $ (see Lemma {\!} \ref{qee}). }   
  
\smallskip      
In  Section 4 we concentrate on the proof of Theorem \ref{h322}. The main problem with respect to {\!} \cite{Pr}  is  {\!}    that $\tilde M$,
 \begin{equation} \label{mm}
 {\tilde M}_t = \int_0^t e^{-rA} \sigma(r) dL_r,\;\; t \ge 0, \;\; \text{has  {\!}    not stationary increments  {\!}    in general}
\end{equation}
(in Theorem 5.1 of \cite{Pr} a {\!} SDE  like   \eqref{dai} is  {\!}    considered assuming that   $\tilde M$ is  {\!}     a {\!} L\'evy process).

\section{The proof of Theorem \ref{h322}  } 

We will consider {\!}     the  steps  {\!}    of the proof of the corresponding  Theorem 5.1 in \cite{Pr}  (see in particular {\!}     Sections  {\!}    3 and 4 in \cite{Pr}) showing that 
they still work without  the stationarity of  increments  {\!}    of the driving  process. To {\!} this  {\!}    purpose we have to {\!}    modify  the proofs  {\!}    of Lemma {\!} 4.3 and Theorem 4.4 in \cite{Pr}    using  Proposition     \ref{s22}.

\smallskip 
  We start with a {\!} 
 strong solution $({V}_t^{s,x})_{t \in [0,T]}$ to {\!} \eqref{dai} defined on $(\Omega, {{\mathcal F}}, \P)$
and  introduce the  $n$-dimensional process  {\!}     
 $  {\bar      Y}^{s,x} = (  {\bar      Y}^{s,x}_t)_{t \in [0,T]}$,  
\begin{eqnarray}
\label{yy1}
  {\bar      Y}_t^{s,x} = {V}_t^{x,s} - ({\tilde M}_t - {\tilde M}_s).
\end{eqnarray} 
Note that on  $\Omega_{s,x}$ (an almost sure event independent of $t$)  we have (cf. \eqref{see})  
\begin{equation} 
\label{yry}
  {\bar      Y}_t^{s,x} = x + \int_s^t {\tilde b}(r,   {\bar      Y}_r^{s,x} + ({\tilde M}_r {\!}     - {\tilde M}_s)) dr, \;\; t \ge s,
\end{equation} 
 and  $  {\bar      Y}_t^{s,x} = x$ on $\Omega $ if $t\le s$.  It follows  {\!}    that  $(  {\bar      Y}_t^{s,x})_{t \in [0,T]}$ have   continuous  {\!}    trajectories.  

Let us  {\!}    fix $s  {\!}    \in [0,T]$ and $x \in {\R}^n$. Setting    ${\bar      Y}_t^{s,x}(\omega) =0$, for {\!}     $t \in [0,T]$, if $\omega \not \in \Omega_{s,x}$, we find that 
 ${\bar      Y}^{s,x}_{\cdot } (\omega)$ $ \in {G_0} = C([0,T]; {\mathbb R}^n)$, for {\!}     any $\omega \in \Omega$. 
 Moreover {\!}       
\begin{eqnarray} \label{ancora}
{\bar      Y}^{s,x} = {\bar      Y}^{s,x}_{\cdot} \;\; \text{ is  {\!}    a {\!} random variable with values  {\!}    in 
${G_0} = C([0,T]; {\R}^n) $};  
\end{eqnarray}
 $G_0$ is  {\!}    the Banach space of all continuous  {\!}    functions  {\!}    from $[0,T]$ into {\!} $\R^n$  endowed with the supremum norm $\| \cdot \|_{G_0}$.  
Now, for {\!}     each fixed $s  {\!}    \in [0,T]$, we   obtain  a {\!} suitable version  of the random field $({\bar  Y^{s,x}})_{x \in {\R}^n}$  which takes  {\!}     values  {\!}    in $G_0$.   
  
The next result can be proved as  {\!}      Lemma {\!} 3.2 in \cite{Pr} (in  the corresponding proof  in \cite{Pr} the  stationarity of  increments  {\!}    of the driving L\'evy process  {\!}     was  {\!}    not used).

\begin{lemma} \label{fi}  Consider {\!}     \eqref{dai}.      
Let us  {\!}    fix $s  {\!}    \in [0,T]$  and consider {\!}     $ {\bar      Y}^{s} = ({\bar      Y}^{s,x})_{x \in {\R}^n}$ which takes  {\!}     values  {\!}    in ${G_0}$ (cf. \eqref{ancora}). We have:

\hh (i) There exists  {\!}    a {\!} continuous  {\!}    $G_0$-valued modification $Y^s  {\!}    =({ Y}^{s,x})_{x \in {\R}^n}$  (i.e., for {\!}     any $x \in {\R}^n,$ $ {\bar      Y}^{s,x}=   { Y}^{s,x}$ in $G_0$ on some almost sure  event).

\hh (ii)  For {\!}     any  $p >2n$ there exists  {\!}    a {\!}   r.v. ${U}_{s,p} $
 with values  {\!}    in $[0, \infty]$ such that, for {\!}     any $\omega \in \Omega$, $x, y \in {\R}^n$,
\begin{equation}
\label{est23}
\|   Y^{s,x}(\omega)   -   Y^{s,y} (\omega)  \|_{G_0} \le {U}_{s,p}(\omega) \, [(|x| \vee |y|)^{\frac{2n + 1}{p}} \vee 1] \,    |x-y|^{1- 2n/p}.
\end{equation} 
Moreover, 
\begin{gather} \label{ri4}
\sup_{s {\,}    \in [0,T]} \, \
\E [{U}_{s,p}^p]  < \infty.  
\end{gather} 
(iii) On some almost sure event $\Omega_s'$ (which is  {\!}    independent of $t$ and $x$) one has:
\begin{equation}
\label{yrye}
  {Y}_t^{s,x} = x + \int_s^t {\tilde b}(r,   {Y}_r^{s,x} + ({\tilde M}_r {\!}     - {\tilde M}_s)) dr, \;\; t \ge s,\; x\in {\mathbb R}^n
\end{equation}
 (where $ {Y}_t^{s,x}(\omega) =  ({Y}_{\cdot}^{s,x}(\omega))(t)$, $t \in [0,T])$;  this  {\!}    implies  {\!}    that, for {\!}     any $\omega \in \Omega_s'$, $x \in {\R}^n$, the map: 
  $t \mapsto {\!} {Y}_t^{s,x}(\omega)$ is  {\!}    continuous  {\!}     on $[0,T]$. 
\end{lemma}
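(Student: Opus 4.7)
The whole lemma will follow from a single $L^p$-moment bound on the increments of the random field $x \mapsto \bar Y^{s,x}$, combined with a Banach-space-valued Kolmogorov continuity argument. By Lemma \ref{qee}, Hypothesis \ref{qq} transfers to the modified equation \eqref{dai}, and since $\bar Y^{s,x}_t - \bar Y^{s,y}_t = V^{s,x}_t - V^{s,y}_t$ (the $\tilde M$-terms cancel in \eqref{yy1}), estimate \eqref{ciao221} applied to \eqref{dai} yields, for every $p \ge 2$,
\begin{equation*}
\sup_{s \in [0,T]} \E\bigl[\| \bar Y^{s,x} - \bar Y^{s,y}\|_{G_0}^p\bigr] \le C_T \, |x-y|^p, \qquad x, y \in \R^n,
\end{equation*}
so $x \mapsto \bar Y^{s,x}$ is a $G_0$-valued random field on $\R^n$ which is Lipschitz in $p$-th mean, with a constant independent of $s$.

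For parts (i) and (ii), I would invoke the Kolmogorov--Chentsov continuity theorem for $G_0$-valued random fields on $\R^n$: choosing $p > 2n$, the bound above produces a continuous modification $Y^s = (Y^{s,x})_{x \in \R^n}$ and, on each ball $B_R \subset \R^n$, a Hölder-type estimate with exponent $1 - 2n/p$. To pass from a local bound on balls to the weighted global bound \eqref{est23} I would decompose $\R^n$ into dyadic annuli $\{2^{k-1} < |x| \le 2^k\}$, control the Hölder constant on each annulus by the same Kolmogorov estimate, and sum; this produces the weight $[(|x| \vee |y|)^{(2n+1)/p} \vee 1]$ at the price of losing an additional $n/p$ in the Hölder exponent. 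The uniform integrability \eqref{ri4} then follows because the constant $C_T$ does not depend on $s$, so summing the resulting moment estimates across annuli yields a random variable $U_{s,p}$ with $\sup_{s \in [0,T]} \E[U_{s,p}^p] < \infty$.

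For (iii) I would fix a countable dense set $D \subset \R^n$ (say $D = \Q^n$). For each $x \in D$ there is an almost sure event on which $\bar Y^{s,x}_\cdot$ satisfies \eqref{yry} and coincides with $Y^{s,x}$ in $G_0$; letting $\Omega'_s$ be the countable intersection, every $\omega \in \Omega'_s$ satisfies \eqref{yry} simultaneously for every rational $x$. Using the joint continuity $(x,t) \mapsto Y^{s,x}_t(\omega)$ furnished by (i)--(ii), the $\beta$-Hölder continuity of $\tilde b(r,\cdot) = e^{-rA} b(r, e^{rA} \cdot)$ in the spatial variable, and dominated convergence under the $dr$-integral, one passes to the limit along rational approximations $x_k \to x$ and extends \eqref{yrye} to every $x \in \R^n$; the continuity of $t \mapsto Y^{s,x}_t(\omega)$ is then automatic from the integral form. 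The main technical obstacle is the Kolmogorov step itself on the unbounded parameter space $\R^n$, namely producing the precise weighted estimate \eqref{est23} together with the $s$-uniform moment bound \eqref{ri4}. Crucially, the nonstationarity of $\tilde M$ plays no role at this stage, since the entire argument rests only on \eqref{ciao221}, which is why the proof from \cite{Pr} transfers to the present setting without change.
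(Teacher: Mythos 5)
Your proposal is correct and follows essentially the same route as the paper, which simply observes that the proof of Lemma~3.2 in \cite{Pr} carries over verbatim: the only probabilistic input is the $L^p$-Lipschitz bound \eqref{ciao221} transferred to \eqref{dai} via Lemma~\ref{qee} (the $\tilde M$-terms cancelling in the difference $\bar Y^{s,x}-\bar Y^{s,y}$), fed into a weighted Kolmogorov/Garsia--Rodemich--Rumsey argument for $G_0$-valued fields plus a countable-dense-set limit for the integral identity, with no use of stationarity of increments. One small bookkeeping slip: the dyadic summation over annuli is paid for by the polynomial weight $[(|x|\vee|y|)^{(2n+1)/p}\vee 1]$ (the exponent $2n+1$ making the series converge), not by a further loss of $n/p$ in the H\"older exponent, which remains $1-2n/p$ as in \eqref{est23}.
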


Let $s  {\!}    \in [0,T]$. According to {\!} the previous  {\!}    result starting from $Y^s  {\!}    = (Y^{s,x})_{x \in {\R}^n}$ we  can define   random variables  {\!}    $X_t^{s,x} : \Omega \to {\!} {\mathbb R}^n$ as  {\!}    follows: $X_t^{s,x} = x$   if $t \le s$ and
\begin{equation}
\label{yy12}
X_t^{s,x} = Y_t^{x,s} + ({\tilde M}_t - {\tilde M}_s),\;\;\; s,t \in [0,T],\; x \in {\mathbb R}^n, \; s  {\!}    \le t.
\end{equation} 
By the properties  {\!}    of $Y^{s,x}$ 
we get   $\P ({\tilde X}^{s,x}_t = {X}^{s,x}_t ,\; t \in [0,T]) =1$, for {\!}     any $x \in {\mathbb R}^n$.

 Moreover, using also {\!}  \eqref{yrye}, we find that   for {\!}     some almost sure event $\Omega_{s}'$ (independent of $x$ and $t$) the map:
$t \mapsto {\!} {X}_t^{s,x}(\omega)$ is  {\!}    c\`adl\`ag  on $[0,T]$, for {\!}     any $\omega \in \Omega_s'$, $x \in {\R}^n$, and on
$\Omega_s'$ we have  
\begin{equation} \label{ssd}
{X}^{s,x}_t =  x + \int_s^t {\tilde b}(r, {X}^{s,x}_r) dr {\!}     + {\tilde M}_{t} - {\tilde M}_s,
\; s  {\!}    \le t \le T,\; x \in {\R}^n.
\end{equation}
Hence $({X}^{s,x}_t)_{t \in [0,T]}$ is  {\!}    a {\!}  particular {\!}     { strong solution} to {\!} \eqref{dai}.        
By Lemma {\!} \ref{fi} we also {\!} have, for {\!}     any $s  {\!}    \in [0,T]$, $x \in {\R}^n$,    
  $\lim_{y \to {\!} x} \sup_{t \in [0,T]}|  
X_t^{s,x}(\omega) - X_t^{s,y}(\omega) | =0,$ $\omega \in \Omega$. 
 
 \smallskip
The following  flow property can be proved as  {\!}      Lemma {\!} 3.3 in \cite{Pr} (indeed the corresponding  proof in \cite{Pr} does  {\!}     not use the stationarity of increments  {\!}    of the driving L\'evy process). 
 \begin{lemma} \label{fll}  Consider {\!}     the strong solution   $(X^{s,x}_t)_{t \in [0,T]}$ of \eqref{dai} defined in \eqref{yy12}.  
Let $0 \le s  {\!}    <u \le  T$. There exists  {\!}    $\Omega_{s,u}$ (an almost sure event  independent of $x \in {\R}^n$ and $t \in [0,T]$)  such that 
for {\!}     any $\omega \in \Omega_{s,u}$, $x \in {\R}^n$, we have 
\begin {gather} \label{co4} 
{ X_t^{s,x } (\omega)}  =  
 {X_t^{u,  \, X_{u}^{s,x}(\omega)} } \,(\omega) ,\;\;\; t \in [u,T], \;\; x \in {\R}^n. 
\end{gather}
\end{lemma}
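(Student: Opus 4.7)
The plan is to combine strong uniqueness (Hypothesis \ref{qq}(ii)) with the continuity of $X^{s,x}_t$ in $x$ from Lemma \ref{fi} and \eqref{yy12} to upgrade the almost sure flow identity (which a priori depends on $x$) to one valid on a single null-exceptional set independent of $x$ and $t$.

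First I would fix $x \in \R^n$ and compare the two $[u,T]$-indexed processes $(X^{s,x}_t)_{t \in [u,T]}$ and $(X^{u,\, X^{s,x}_u}_t)_{t \in [u,T]}$. By \eqref{ssd}, the first solves \eqref{dai} on $[u,T]$ with initial value $X^{s,x}_u$ at time $u$, and by construction so does the second. The key point is that $X^{s,x}_u$ is $\mathcal{F}^L_{s,u}$-measurable, while the SDE on $[u,T]$ is driven by the increments $L_t - L_u$, $t \in [u,T]$, which are independent of $\mathcal{F}^L_{s,u}$. Conditioning on $\{X^{s,x}_u = y\}$ and applying Hypothesis \ref{qq}(ii) to the deterministic-initial-condition problem at time $u$ (the same shift device as in the proof of Lemma \ref{basic}, using the translated L\'evy process $L^{(u)}$), one obtains an almost sure event $\Omega_{s,u,x}$ on which \eqref{co4} holds for every $t \in [u,T]$.

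Next I would remove the dependence on $x$. Fix a countable dense subset $D \subset \R^n$, say $D = \mathbb{Q}^n$, and set $\tilde\Omega_{s,u} = \bigcap_{x \in D} \Omega_{s,u,x}$, still an almost sure event. On $\tilde\Omega_{s,u}$ the identity \eqref{co4} holds simultaneously for every $x \in D$ and every $t \in [u,T]$. To extend to all of $\R^n$, I would invoke Lemma \ref{fi}(ii) and \eqref{yy12}: on a further almost sure event $\Omega''$ both maps $x \mapsto (X^{s,x}_t(\omega))_{t \in [0,T]}$ and $y \mapsto (X^{u,y}_t(\omega))_{t \in [0,T]}$ are continuous from $\R^n$ into the space of c\`adl\`ag paths with the uniform norm. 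Hence, for $\omega \in \tilde\Omega_{s,u} \cap \Omega''$, both sides of \eqref{co4} depend continuously on $x$ uniformly in $t$, and agreement on the dense set $D$ forces agreement on all of $\R^n$. Taking $\Omega_{s,u} := \tilde\Omega_{s,u} \cap \Omega''$ gives the required exceptional set.

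The step I expect to be the main obstacle is the passage from uniqueness with a deterministic initial condition, which is literally what Hypothesis \ref{qq}(ii) provides, to uniqueness with the random initial condition $X^{s,x}_u$ at time $u$. This is the place where the independence of $X^{s,x}_u$ from the increments $(L_t - L_u)_{t \ge u}$ must be used carefully, exactly in the spirit of the shifted-process argument appearing in Lemma \ref{basic}. Once this reduction is in place, the countability and continuity argument extending from $D$ to all of $\R^n$ is routine.
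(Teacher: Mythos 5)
Your outer scaffolding is the one the paper relies on (it defers to Lemma 3.3 of \cite{Pr}): for fixed $x$ show that the two processes agree a.s.\ on $[u,T]$, then intersect over a countable dense set of initial points and use the continuity of $x\mapsto X^{s,x}_\cdot(\omega)$ and $y\mapsto X^{u,y}_\cdot(\omega)$, uniformly in $t$ (Lemma \ref{fi}(ii) and \eqref{yy12}), to obtain one almost sure event independent of $x$ and $t$. That second part of your argument is fine.

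The gap is exactly at the step you flag, and as written it does not go through. ``Conditioning on $\{X^{s,x}_u=y\}$'' is not a legitimate operation: this event has probability zero whenever the law of $X^{s,x}_u$ is non-atomic, and Hypothesis \ref{qq}(ii) is formulated only for strong solutions issued from \emph{deterministic} points; the process $t\mapsto X^{u,\,X^{s,x}_u}_t$ is not $\mathcal{F}^L_{u,t}$-adapted (its initial datum is $\mathcal{F}^L_{s,u}$-measurable), so the hypothesis cannot be invoked at time $u$ with a random starting point. Moreover, the shift device of Lemma \ref{basic} that you appeal to is unavailable for \eqref{dai}: it rests on the shifted process $L^{(u)}$ having the same generating triplet as $L$, i.e.\ on stationarity of increments, which is precisely what $\tilde M_t=\int_0^t e^{-rA}\sigma(r)\,dL_r$ and the time-dependent drift $\tilde b(r,\cdot)$ lack (this is the whole issue the paper addresses in Section 4). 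The resolution used in \cite{Pr} avoids both problems and uses only independence of increments: define the concatenated process $V_t:=X^{s,x}_t$ for $t\in[s,u]$ and $V_t:=X^{u,\,X^{s,x}_u}_t$ for $t\in[u,T]$. Since \eqref{ssd} (with $u$ in place of $s$) holds for \emph{all} initial points simultaneously on one almost sure event, you may substitute the random point $X^{s,x}_u(\omega)$ pathwise and check that $V$ solves the integral equation on $[s,T]$ starting from the deterministic point $x$; joint measurability of $(y,\omega)\mapsto X^{u,y}_t(\omega)$ together with the $\mathcal{F}^L_{s,u}$-measurability of $X^{s,x}_u$ gives that $V_t$ is $\mathcal{F}^L_{s,t}$-measurable, so $V$ is a strong solution from $x$ at time $s$. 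Applying Hypothesis \ref{qq}(ii) to the pair $X^{s,x}$ and $V$ with $y=x$ gives $\E[\sup_{u\le t\le T}|X^{s,x}_t-X^{u,\,X^{s,x}_u}_t|^p]=0$, i.e.\ the identity \eqref{co4} for the fixed $x$, with no conditioning on null events and no shifted equation. With this replacement, your dense-set and continuity argument completes the proof.
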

 \noindent  Now as  {\!}    in \cite{Pr} we introduce  
 $ C({\R}^n; {G_0})$
 the space  of all functions  {\!}    from ${\R}^n$ into {\!} ${G_0}= C([0,T]; {\R}^n)$ endowed with the   compact-open topology.   
This  {\!}    is  {\!}    a {\!} complete separable metric space  endowed with the metric 
\begin{gather} \label{metr}
{d_0} (f,g) = \sum_{k \ge 1} \frac{1}{2^k} \frac{\sup_{|y|\le k} \|f(y) - g(y) \|_{G_0} }{1+ \sup_{|y|\le k} \|f(y) - g(y) \|_{G_0} },\;\;\; f,g \in C({\R}^n; {G_0}).
\end{gather}
We will also {\!} use  the continuous  {\!}     projections:  
\begin{gather} \label{pro2} 
 \pi_x : \;  { C({\R}^n; {G_0})} \to {\!} {G_0}, \quad  \pi_x (l) = l(x) \in G_0, \;\; x \in {\R}^n, \;\; 
 l \in C({\R}^n; {G_0}).  
\end{gather}
   By Lemma {\!} \ref{fi} for {\!}     any $s  {\!}    \in [0,T]$  the  random field ${ (Y^{s,x})_{x \in {\R}^n}}$ has  {\!}    continuous  {\!}    trajectories. 
It is  {\!}    straightforward  to {\!} prove that, 
 for {\!}     any $s  {\!}    \in [0,T]$, the mapping:
\begin{equation}
\label{cte}
 \omega  \mapsto {\!}  Y^s  {\!}    (\omega) = Y^{s,  \, \cdot \,} (\omega)
\end{equation} 
is  {\!}    measurable from $(\Omega, {{\mathcal F}}, \P) $ with values  {\!}    in $C({\R}^n; {G_0})$ (cf. page 702 in \cite{Pr}).
    
\smallskip
We will set  $Y = (Y^s)_{s {\,}    \in [0,T]}$ to {\!} denote the previous  {\!}    stochastic
process  {\!}    with values  {\!}    in $C({\mathbb R}^n; {G_0})$ and defined on $(\Omega, {{\mathcal F}}, \P)$.

The next two {\!} results  {\!}    correspond to {\!} Lemma {\!} 4.3 and Theorem 4.4 in \cite{Pr} respectively. In their {\!}     proofs  {\!}     the stationarity of increments  {\!}    of the driving L\'evy process  {\!}     has  {\!}    been used.   To {\!} overcome this  {\!}    difficulty  we will use  Proposition \ref{s22}.  
 \begin{lemma}  \label{modi1a} 
The  process  {\!}    $Y= (Y^s)$ with values  {\!}    in 
$C({\mathbb R}^n; {G_0})$ (see (\ref{cte})) is  {\!}    continuous  {\!}    in probability. 
 \end{lemma}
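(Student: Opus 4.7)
Plan: Fix $s\in[0,T]$ and a sequence $s_n \to s$; we must show $Y^{s_n} \to Y^s$ in probability in $(C({\R}^n; G_0), d_0)$. By the structure of the metric \eqref{metr}, this reduces to showing, for each $R>0$,
\[
\sup_{|x|\le R} \|Y^{s_n,x} - Y^{s,x}\|_{G_0} \longrightarrow 0 \quad \text{in probability,}
\]
which I treat by passing to subsequences and handling $s_n \downarrow s$ and $s_n \uparrow s$ separately. Switching to the strong solutions of \eqref{dai} via $Y^{s,x}_t = V^{s,x}_t - (\tilde M_t - \tilde M_s)$ gives $Y^{s_n,x}_t - Y^{s,x}_t = (V^{s_n,x}_t - V^{s,x}_t) + (\tilde M_{s_n} - \tilde M_s)$. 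The second summand is independent of $x$ and tends to $0$ in $L^\theta$ (hence in probability) by Proposition \ref{s22}; for $s_n\uparrow s$ I use in addition that the additive process $\tilde M$ is stochastically continuous, so $\tilde M_{s-}=\tilde M_s$ a.s.\ at any fixed $s$. The task reduces to $\sup_{|x|\le R}\|V^{s_n,x} - V^{s,x}\|_{G_0}\to 0$ in probability.

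Assume $s<s_n$. Strong uniqueness (Hypothesis \ref{qq}(ii)), the continuous-in-$x$ modification from Lemma \ref{fi}, and the independence of $\mathcal F^{L}_{s,s_n}$ from the post-$s_n$ increments of $\tilde M$ yield the flow identity $V^{s,x}_t = V^{s_n,\,V^{s,x}_{s_n}}_t$ for $t\in [s_n,T]$ on a set of full measure. Conditioning on $\mathcal F^L_{s,s_n}$ and combining Markov's inequality with the $L^p$-Lipschitz estimate \eqref{ciao22} for \eqref{dai} (valid by Lemma \ref{qee}) gives, for any $\varepsilon>0$ and any $p\ge 2$,
\[
\P\Bigl(\sup_{s_n\le t\le T}|V^{s_n,x}_t - V^{s,x}_t| > \varepsilon\Bigr) \le \E\Bigl[1 \wedge \tfrac{C_T}{\varepsilon^p}\, |V^{s,x}_{s_n} - x|^p\Bigr].
\]
From \eqref{dai}, $|V^{s,x}_{s_n}-x|\le \|\tilde b\|_{0,T}|s_n-s|+|\tilde M_{s_n}-\tilde M_s|$, which tends to $0$ in probability by Proposition \ref{s22}; bounded convergence makes the right-hand side vanish. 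For $t\in[s,s_n]$ the difference is directly bounded by $\|\tilde b\|_{0,T}|s_n-s|+\sup_{s\le r\le s_n}|\tilde M_r-\tilde M_s|$, again small in probability by right-continuity of $\tilde M$ at $s$. The case $s_n\uparrow s$ is analogous, using instead $V^{s_n,x}_t = V^{s,\,V^{s_n,x}_s}_t$ for $t\in[s,T]$ and estimating $|V^{s_n,x}_s - x|$ by the same argument.

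To promote this pointwise-in-$x$ convergence in probability to uniform convergence on $\{|x|\le R\}$, I invoke the random Hölder-type estimate \eqref{est23} of Lemma \ref{fi}, whose constants $U_{s,p}$ have moments bounded uniformly in $s$ by \eqref{ri4}; a standard $\eta$-net argument on $B_R$ reduces the supremum to finitely many pointwise evaluations plus an $\eta^{1-2n/p}$-oscillation controlled by $U_{s_n,p}+U_{s,p}$, and the conclusion follows by first letting $n\to\infty$ and then $\eta\downarrow 0$. The main obstacle, relative to \cite{Pr}, is that $\tilde M_{s_n}-\tilde M_s$ is not equal in distribution to $\tilde M_{|s_n-s|}$ because $\tilde M$ has non-stationary increments; this is precisely where Proposition \ref{s22} substitutes for the stationarity-based increment estimates used in \cite{Pr}.
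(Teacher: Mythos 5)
Your proposal is correct and in substance follows the same route as the paper: the paper's proof of Lemma \ref{modi1a} simply adapts the proof of Lemma 4.3 in \cite{Pr} (flow identity at the intermediate time, the Lipschitz/H\"older dependence on the initial point coming from Hypothesis \ref{qq} and Lemma \ref{fi}), with Proposition \ref{s22} playing exactly the role you give it as the substitute for stationarity of increments. The only difference is presentational: you reconstruct the argument in a self-contained way (Chebyshev plus a net argument and bounded convergence), whereas the paper records only the two modifications to \cite{Pr} — choosing $\beta$ with $\beta(2n+1)<2n\theta$ and controlling the increments of $\tilde M$ through $\E[\sup_{t\le T}|\tilde M_t|^{\theta}]<\infty$ from Proposition \ref{s22}.
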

 \begin{proof} 
To {\!} perform the  proof of Lemma {\!} 4.3 in \cite{Pr} 
  replacing the L\'evy process  {\!}    $L$ of \cite{Pr}  with the additive process  {\!}    $\tilde M$ (and, moreover, $b$ with $\tilde b$ and  the dimension $d$ with $n$),
  we start to {\!}  
  choose $\beta$ small enough such that 
 \begin{gather} \label{bet}
 \beta (2n + 1)\,  < 2n {\theta}
\end{gather}  
 (cf. (4.5) in \cite{Pr} and recall that $C^{0, \beta}_b(\R^n; \R^n) \subset $ $ 
 C^{0,\gamma}_b(\R^n; \R^n)$ for {\!}     $0 < \gamma {\!} \le \beta \le 1$).  
  
Then we  replace the estimate after {\!}     (4.12) in \cite{Pr} as  {\!}    follows  {\!}    (cf. \eqref{mm} and see Proposition \ref{s22}):
\begin{gather*}
\E [| {\tilde M}_{s_n} - {\tilde M}_s|^{\frac{\beta (2n + 1) }{2n}}] 
 \le  \E \big[ \big |\int_s^{s_n} e^{-rA} \sigma(r) dL_r {\!}      \big |^{\frac{\beta (2n + 1) }{2n}} \big ]
\\ 
\le   \E [ \sup_{s {\,}    \in [0,T]} |{\tilde M}_s|^{\frac{\beta (2n + 1) }{2n} }]
< \infty.  
\end{gather*}
 The remaining part of the proof of  Lemma {\!} 4.3 in \cite{Pr} can be easily adapted to {\!} the present setting and we obtain the assertion.
\end{proof} 
 
 \begin{theorem}  \label{modi12}
Consider {\!}     the  process  {\!}    $Y = (Y^s)$ with values  {\!}    in 
$C({\R}^n; {G_0})$ (see (\ref{cte})). 
There exists  {\!}    a {\!} modification $Z = (Z^s)$ of $Y$  with c\`adl\`ag paths. 
\end{theorem}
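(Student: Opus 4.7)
The goal is to construct, in the Polish metric space $(C(\R^n;G_0),d_0)$, a c\`adl\`ag modification $Z=(Z^s)_{s\in[0,T]}$ of $Y=(Y^s)_{s\in[0,T]}$. Since Lemma \ref{modi1a} already gives continuity of $Y$ in probability, the standard route is a Kolmogorov--Chentsov type c\`adl\`ag regularization theorem for Polish-valued processes: it suffices to exhibit an increment estimate of the form $\E[d_0(Y^{s_1},Y^{s_2})^{\alpha}]\le C|s_1-s_2|^{1+\eta}$ for $0\le s_1<s_2\le T$ and some $\alpha,\eta>0$. This is exactly the route of the proof of Theorem 4.4 in \cite{Pr}; the only point there that used the stationarity of increments of $L$ was the moment bound on the driving process, which we now replace by Proposition \ref{s22} applied to the additive process $\tilde M$.

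The first step is to convert the $d_0$-increment into a control on finitely many marginals. Using the inequality $a/(1+a)\le a^{\delta}$ valid for $a\ge 0$ and $\delta\in(0,1]$, and the definition \eqref{metr}, one dominates $d_0(Y^{s_1},Y^{s_2})^{\alpha}$ by a weighted sum of the quantities $\sup_{|x|\le k}\|Y^{s_1,x}-Y^{s_2,x}\|_{G_0}^{\delta\alpha}$; truncating at a level $k_0$ large enough reduces matters to estimating $\sup_{|x|\le R}\|Y^{s_1,x}-Y^{s_2,x}\|_{G_0}^{\alpha}$ for a single $R$. The H\"older estimate \eqref{est23} of Lemma \ref{fi}, combined with the moment bound \eqref{ri4} on $U_{s,p}$, lets one replace this sup by a maximum over a finite $\varepsilon$-net in $\{|x|\le R\}$ plus an error of order $\varepsilon^{1-2n/p}(U_{s_1,p}+U_{s_2,p})$; choosing $\varepsilon$ as a suitable power of $|s_1-s_2|$ makes the error contribute the desired rate.

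The heart of the proof is then the pointwise moment bound, for each fixed $x\in\R^n$,
$$\E\bigl[\|Y^{s_1,x}-Y^{s_2,x}\|_{G_0}^{\alpha}\bigr]\le C\,|s_1-s_2|^{1+\eta}.$$
I would split $[0,T]$ into $[0,s_1]$, $[s_1,s_2]$, $[s_2,T]$. On $[0,s_1]$ both processes equal $x$. On $[s_1,s_2]$ one has $Y^{s_2,x}_t=x$ while the integral equation \eqref{yrye} gives $|Y^{s_1,x}_t-x|\le\|\tilde b\|_{0,T}(s_2-s_1)$. On $[s_2,T]$ I would use the identity $Y^{s,x}=V^{s,x}-(\tilde M_\cdot-\tilde M_s)$ together with the flow property of Lemma \ref{fll} to reduce the difference to $V^{s_2,V^{s_1,x}_{s_2}}_t-V^{s_2,x}_t$; Hypothesis \ref{qq}, applied conditionally on $\mathcal{F}^{L}_{s_2}$ (exploiting the independence of the post-$s_2$ increments of $L$ from $\mathcal{F}^L_{s_2}$), bounds this in $L^p$ by $C\,\E|V^{s_1,x}_{s_2}-x|^p$, which in turn is controlled by $C(s_2-s_1)^p\|\tilde b\|_{0,T}^p+C\,\E|\tilde M_{s_2}-\tilde M_{s_1}|^p$.

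The main obstacle, exactly as in the proof of Lemma \ref{modi1a}, is that only fractional moments of $\tilde M_{s_2}-\tilde M_{s_1}$ are controlled linearly in $|s_1-s_2|$, through Proposition \ref{s22}. Accordingly I would take $\alpha$ small and $p>2n$ large in \eqref{est23}, so that $1-2n/p$ is close to $1$, and tune $\delta$ and the net scale $\varepsilon$ so that all error terms combine to yield a genuine $(1+\eta)$-exponent in $|s_1-s_2|$; this is the analogue of the constraint $\beta(2n+1)<2n\theta$ that arose in Lemma \ref{modi1a}. Once the increment estimate is in place, the Polish-space c\`adl\`ag regularization theorem (as applied in Section 4 of \cite{Pr}) produces the required modification $Z=(Z^s)$.
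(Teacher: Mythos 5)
Your overall plan --- reduce to a moment estimate on increments of $s\mapsto Y^s$ in $(C(\R^n;G_0),d_0)$ and feed it into a regularization theorem --- is in the right family, but the criterion you chose to verify is the wrong one, and this is a genuine gap. You propose the two-point Kolmogorov--Chentsov bound $\E[d_0(Y^{s_1},Y^{s_2})^{\alpha}]\le C|s_1-s_2|^{1+\eta}$. Such a bound cannot hold here in general: the only integrability assumed on $L$ is $\E[|L_1|^{\theta}]<\infty$ with $\theta\in(0,1)$, and the large-jump part $K$ of $\tilde M$ satisfies (Proposition \ref{s22}) $\E[|K_t-K_s|^{\theta}]\le C|t-s|$ with exponent exactly $1$; this is sharp for a compound Poisson component (a jump occurs with probability comparable to $|t-s|$ and then contributes a quantity of order one), and lowering the power $\alpha$ does not improve the time exponent. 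Since the jumps of $\tilde M$ enter $Y^{s,x}$ through \eqref{yrye}, no tuning of $\alpha$, $\delta$, $p$ or of your net scale $\varepsilon$ can produce an exponent $1+\eta>1$ from \emph{single} increments. Moreover, if your estimate did hold, the Polish-space Kolmogorov criterion would give a \emph{continuous} modification of $(Y^s)$, which is false when $L$ has jumps --- the theorem claims, and can only claim, a c\`adl\`ag modification. A related defect appears inside your pointwise step, where you invoke $\E[|\tilde M_{s_2}-\tilde M_{s_1}|^{p}]$ with $p\ge 2$: such moments need not be finite under the standing hypothesis $\E[|L_1|^{\theta}]<\infty$.

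What the paper actually does (following Theorem 4.4 in \cite{Pr}) is verify the Chentsov-type criterion for c\`adl\`ag modifications based on \emph{three} time points: it bounds the product of adjacent increments, $\E\bigl[\bigl(d_0(Y^{s_1},Y^{s_2})\,d_0(Y^{s_2},Y^{s_3})\bigr)^{8/\beta}\bigr]\le C_0|s_3-s_1|^{5/4}$ for $s_1<s_2<s_3$. The super-linear exponent comes precisely from the independence of the increments of $\tilde M$ over the disjoint intervals $[s_1,s_2]$ and $[s_2,s_3]$: the product of the two ``bad event'' probabilities $\P(|\tilde M_{s_{i+1}}-\tilde M_{s_i}|>|s_{i+1}-s_i|^{1/8})$ --- each only of order $|s_{i+1}-s_i|^{3/4}+|s_{i+1}-s_i|^{1-\theta/8}$, hence individually of exponent less than $1$ --- multiplies up to order $|s_3-s_1|^{3/2}$ (the term $\Gamma_1$), while the mixed terms $\Gamma_2,\Gamma_3$ combine the factor $\rho^{1-2n/p}$ coming from \eqref{est23}--\eqref{ri4} with one such probability, obtained via Chebyshev applied to the fractional moments of Proposition \ref{s22} and the auxiliary r.v. $Z=1+\sup_{r\in[0,T]}|\tilde M_r|^{8(2n+1)/p}$ with $\E[Z^4]<\infty$, again exceeding exponent $1$. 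The product structure over three points and the independence of increments are the indispensable ingredients missing from your proposal; without them the required exponent strictly larger than $1$ is unreachable.
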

 \begin{proof}  We follow the proof of the corresponding Theorem 4.4 in \cite{Pr} replacing $L$ in \cite{Pr} with $\tilde M$, the dimension $d$ with $n$ and $b$ with $\tilde b$. 
  In the sequel we only indicate some changes.  
 
The main changes  {\!}    are in Step IV of the proof of Theorem 4.4 in \cite{Pr}. 
 We   fix $ p \ge 32 n$ (i.e., $1- \frac{2n}{p} \ge 15/16$)  such that $\frac{8 (2n +1)}{p} < \frac{{\theta}}{4}$   
and consider {\!}     the r.v.
$$
 Z = 1 + \sup_{r {\,}     \in [0,T]} |\tilde M_r|^{\frac{8 (2n + 1) }{p}}.
$$
 One has  {\!}    $|{\tilde M}_{s'} - {\tilde M}_s|^{\frac{8 (2n + 1) }{p}}
 $ $\le 2 Z$, for {\!}     $0 \le s  {\!}    < s'$ $ \le T$. By Proposition \ref{s22}  we know that
$\E [Z^4]$ $ < \infty.$

Following the proof in   Step IV, replacing  $L$ with ${\tilde M}$ we   arrive at the problem of  estimating    $\Gamma_i$, $i=1, \ldots  {\!}    4$.  
 
The term  $\Gamma_4$ can be treated as  {\!}    in \cite{Pr}. Let us  {\!}    deal with the other {\!}     terms; setting 
 $\rho {\!} =  s_3 - s_1$  (recall that $0 \le s_1 < s_2 < s_3 \le T$) and using the r.v. $U_{s,p}$ of Lemma {\!} \ref{fi}, we consider {\!}     
 \begin{gather*}
 \Gamma_1 =  \E [ 1_{ \{    |{\tilde M}_{s_2}- {\tilde M}_{s_1}| >    |s_2 -s_1|^{1/8}   \}}  \cdot  1_{ \{    |{\tilde M}_{s_3}- {\tilde M}_{s_2}| >    |s_3 -s_2|^{1/8}   \}} ],
\\ 
\Gamma_2 =  \rho^{1- \frac{2n}{p}} [ \P ( |{\tilde M}_{s_3}- {\tilde M}_{s_2}| >    |s_3 -s_2|^{1/8} +  \P ( |{\tilde M}_{s_2}- {\tilde M}_{s_1}| >    |s_2 -s_1|^{1/8} )], 
\\
\Gamma_3 = \rho^{1- \frac{2n}{p}}  \E [1_{ \{    |{\tilde M}_{s_3}- {\tilde M}_{s_2}| >   |s_3 -s_2|^{1/8}   \}}
 Z\,  U_{s_2,p }^{8}  + 
1_{ \{    |{\tilde M}_{s_2}- {\tilde M}_{s_1}| >   |s_2 -s_1|^{1/8}   \}}
Z\,  U_{s_3,p }^{8} ].  
\end{gather*}
 We need to {\!} estimate, for {\!}       $r {\!}     \le s  {\!}    $, $r,s  {\!}    \in [0,T] $,   
\begin{gather} \label{w} 
\P ( |{\tilde M}_{s} - {\tilde M}_r {\!}     | >    |r-s|^{1/8} ).   
\end{gather}
We use Proposition \ref{s22} with its  {\!}    notation. 
 We have 
\begin{gather*}
 \P ( |{\tilde M}_{s} - {\tilde M}_r {\!}     | >     |r-s|^{1/8} ) \le
  \P ( |I_{s}- I_r| >     |r-s|^{1/8}/3 )
  \\ + 
   \P ( |J_{s} - J_r| >     |r-s|^{1/8}/3 )
 +  \P ( |K_{s} - K_r| >     |r-s|^{1/8}/3 ).
\end{gather*}
  Applying the Chebychev inequality we get
\begin{gather} \label{vaii}
 \P (  |{\tilde M}_{s} - {\tilde M}_r {\!}     | >     |r-s|^{1/8} ) \le
 \frac{9}{ |r-s|^{1/4}} \E [ |I_{s} - I_r {\!}     |^2 +  |J_{s} - J_r {\!}     |^2] 
 \\ \nonumber
 + 
    \frac{3^{{\theta}}}{ |r-s|^{{\theta}/8}} \E [| |K_{s} - K_r {\!}     ||^{{\theta}}]    
\le c_3  ( |r-s|^{3/4} +  |r-s|^{1- \frac{{\theta}}{8}}).    
\end{gather}
By \eqref{vaii}   we  estimate ${\Gamma_2}$
and ${\Gamma_3}$ as  {\!}    follows  {\!}    
 \begin{gather*}
{\Gamma_2} \le  \rho^{1- \frac{2n}{p}} [ \P ( |{\tilde M}_{s_3} - {\tilde M}_{s_2}| >    |s_3 -s_2|^{1/8}) +  \P ( |{\tilde M}_{s_2}  - {\tilde M}_{s_1}| >    |s_2 -s_1|^{1/8} )] 
\\
\le 2 c_3 \rho^{1- \frac{2n}{p}} (\rho^{3/4} + \rho^{1- \frac{{\theta}}{8}}). 
\\
{\Gamma_3} \le  \rho^{1- \frac{2n}{p}} (\E [Z^4])^{1/4}  
(\sup_{s {\,}    \in [0,T]} \E [U_{s,p }^{32}])^{1/4} 
 \, \Big [ (\P ( |{\tilde M}_{s_3} - {\tilde M}_{s_2}| >    |s_3 -s_2|^{1/8})^{1/2} 
\\ +  (\P ( |{\tilde M}_{s_2} -{\tilde M}_{s_1}| >    |s_2 -s_1|^{1/8} ))^{1/2} \Big]  
\le C_8  \rho^{1- \frac{2n}{p}} (\rho^{3/8} + \rho^{\frac{1}{2} (1- \frac{{\theta}}{8}) }). 
\end{gather*}
Since  $(1- \frac{2n}{p}) + {3/8} >1$  and  
 $(1- \frac{2n}{p})$ $+ \frac{1}{2} (1- \frac{{\theta}}{8}) >1$, we arrive at 
\begin{gather*}  
 {\Gamma_2} + {\Gamma_3} \le C_9 \rho^{\frac{5}{4}} =
  C_9 |s_3 -s_1|^{5/4}.
\end{gather*} 
Finally,  by the independence of  increments  {\!}    and using \eqref{vaii}, we find
$$
\begin{array}{l}
 {\Gamma_1}  \le 
 (\P ( |{\tilde M}_{s_3} - {\tilde M}_{s_2}| >    |s_3 -s_2|^{1/8})  
 \cdot  (\P ( |{\tilde M}_{s_2} - {\tilde M}_{s_1}| >    |s_2 -s_1|^{1/8})
\\  
\le  2c_3^2 \,   (\rho^{3/2} + \rho^{2(1- \frac{{\theta}}{8})}) 
 \le  c_4 |s_3- s_1|^{3/2}.
\end{array}
 $$  
Collecting 
 the previous  {\!}    estimates  {\!}    we finish the proof  as  {\!}    in \cite{Pr} obtaining
$$
 \begin{array}{l}
\E \big [ \big ( d_0  (Y^{s_1} , Y^{{s_2}} )  \cdot 
  d_0 ( Y^{s_2} ,  Y^{{s_3}} ) \big)^{8/\beta} \big]
 \; \le \,  C_{0} |s_3- s_1|^{5/4}.   
\end{array}
$$
 \end{proof}
  By  Theorem  \ref{modi12} 
 and using  the projections  {\!}    ${\pi}_x$ (cf. \eqref{pro2}),
  we write, for {\!}     $s  {\!}    \in [0,T]$, $x \in {\R}^n$,  
 $$
Z^s  {\!}    = (Z^{s,x})_{x \in {\R}^n}, \;\;\; \text{with} \;  
\pi_x (Z^s)= Z^{s,x} \in G_0. 
$$   
Recall that on some $\Omega_s$ (almost sure event)  $Y^{s,x} 
 = Z^{s,x}$, $s  {\!}    \in [0,T],    x \in \R^n$
(cf. \eqref{cte}).  
     The next result can be proved in the same way as  {\!}    Lemma {\!} 4.5 in \cite{Pr} replacing $b, d$ and $L$ with $\tilde b, n$ and $\tilde M$ respectively. 
 \begin{lemma}
\label{le1} Consider {\!}     the c\`adl\`ag process  {\!}     $Z$ which takes  {\!}     values  {\!}    in $C(\R^n; G_0)$ (see Theorem \ref{modi12}). The following assertions  {\!}    hold:
\hh (i) There exists  {\!}     $\Omega_1$
 (an almost sure event independent of $s,t$ and $x$)
such that for {\!}     any $\omega \in {\Omega}_1$, we have
 that $t \mapsto {\!} \tilde M_t (\omega)$ is  {\!}    c\`adl\`ag, $\tilde  M_0(\omega)=0$ and 
 $s  {\!}    \mapsto {\!} Z^s(\omega)$ is  {\!}    c\`adl\`ag; further, for {\!}     any $\omega \in \Omega_1$, 
$$
  \;\; Z^{s,x}_t(\omega) = x + \int_s^{t} \tilde b(r, Z^{s,x}_r(\omega)  + \tilde M_{r}(\omega) - \tilde M_s(\omega))dr, \; 0 \le s  {\!}    \le t \le T, \,x \in {\R}^n.
$$
Moreover,  for {\!}     $s  {\!}    \le t $, the r.v. $Z^{s,x}_t$ is  {\!}    ${\cal F}_{s,t}^L$-measurable (if $t \le s  {\!}    $, $Z^{s,x}_t =x$).

\hh (ii) There exists  {\!}    an almost sure event ${{\Omega}_2}$  and  a {\!}  ${\cal B}([0,T]) \times  {\cal F}$-measurable function  $V_{m}: [0,T] \times  \Omega $ $\to {\!} [0, \infty]$, with ${\int_0^T V_{m}(s,\omega)ds  {\!}    < \infty}$, for {\!}     any integer {\!}     $m >2n$, $\omega \in {\Omega}_2$, and, further, the following inequality holds  {\!}    on ${\Omega}_2$ 
\begin{gather*} 
 \sup_{t \in [0,T]} |Z_t^{s,x} - Z_t^{s,y}|  
\le  |x-y|^{\frac{m - 2n}{m}} [(|x| \vee |y|)^{\frac{2n + 1}{m}} \vee 1] \, V_{m}(s, \cdot),  \; x,y \in {\R}^n, \, s  {\!}    \in [0,T].
\end{gather*} 
(iii) There exists  {\!}    {\ an almost sure event}  ${\Omega}_3$ such 
that for {\!}     any $\omega \in \Omega_3$ we have
\begin{equation}
 \label{flo2}
 Z_t^{s,x} (\omega) + L_u(\omega) - L_s(\omega)
  = Z_t^{u, \, Z_u^{s,x}(\omega) + L_u(\omega) - L_s(\omega)}\, (\omega), \,
\end{equation} 
for {\!}     any $x \in {\R}^n$, $s,u,t \in [0,T]$, with 
$0 \le s  {\!}    < u \le T$.
\end{lemma}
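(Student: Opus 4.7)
The proof follows the pattern of Lemma 4.5 in \cite{Pr} verbatim, replacing $b$, $d$, $L$ by $\tilde b$, $n$, $\tilde M$ throughout, since the preparatory results we need---the continuous modification of $x \mapsto Y^{s,x}$ (Lemma \ref{fi}), the flow identity for $X$ (Lemma \ref{fll}) and the c\`adl\`ag modification of $s \mapsto Y^s$ (Theorem \ref{modi12})---have already been adapted to the present degenerate setting. The technical heart of each part is to pool the countably many null sets indexed by the parameters $(s,t,u,x)$ into a single almost sure event independent of those parameters.

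For (i), I would start from the modification $Z$ produced by Theorem \ref{modi12} and intersect the exceptional null sets coming from \eqref{yrye} at rational $(s,x) \in (\Q \cap [0,T]) \times \Q^n$ with the event on which $\tilde M$ is c\`adl\`ag with $\tilde M_0 = 0$ and $s \mapsto Z^s$ is c\`adl\`ag in $C(\R^n; G_0)$; this defines $\Omega_1$. On $\Omega_1$ the integral equation extends to every $(s,x) \in [0,T] \times \R^n$ by right-continuity of $s \mapsto Z^s(\omega)$, by continuity of $x \mapsto Z^{s,x}(\omega)$ supplied by Lemma \ref{fi}(ii), and by dominated convergence in the $dr$-integral, using that $\tilde b$ is bounded. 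The $\mathcal{F}_{s,t}^L$-measurability of $Z_t^{s,x}$ is inherited from that of $Y_t^{s,x}$, which is built from strong solutions of \eqref{dai} and from $\tilde M_t - \tilde M_s$.

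For (ii), I would set $V_m(s,\omega) := U_{s,m}(\omega)$ with $U_{s,m}$ the random variable from Lemma \ref{fi}(ii); joint $(s,\omega)$-measurability follows from its Kolmogorov-type construction, which is uniform in $s$. The H\"older estimate \eqref{est23} transfers from $Y$ to $Z$ on the modification event. Integrability is a Tonelli calculation: by \eqref{ri4}, $\E \int_0^T V_m(s,\cdot)\, ds = \int_0^T \E[V_m(s,\cdot)]\, ds \le T (\sup_{s \in [0,T]} \E[U_{s,m}^m])^{1/m} < \infty$, so $\int_0^T V_m(s,\omega)\, ds < \infty$ on an almost sure event $\Omega_2$.

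For (iii), the flow identity for $X$ in Lemma \ref{fll}, combined with the decomposition $X_t^{s,x} = Y_t^{s,x} + (\tilde M_t - \tilde M_s)$ and the $\P$-a.s.\ identification $Y = Z$, yields the stated relation at each fixed $(s,u,t,x)$. The main obstacle---and the reason we have carried through all the c\`adl\`ag and continuity properties of $Z$---is to glue the countable family of null sets $\Omega_{s,u}$ indexed by $s,u \in \Q \cap [0,T]$ with $s < u$ into a single $\Omega_3$ on which the identity holds for every $s,u,t$ and $x$ simultaneously. The density argument uses c\`adl\`ag behaviour in $s$ and $u$, right-continuity in $t$, and the H\"older-type continuity in $x$ from parts (i) and (ii).
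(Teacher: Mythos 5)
Your proposal coincides with the paper's treatment: the paper disposes of this lemma in a single line, asserting that it ``can be proved in the same way as Lemma 4.5 in \cite{Pr} replacing $b$, $d$ and $L$ with $\tilde b$, $n$ and $\tilde M$ respectively,'' which is exactly your strategy, and the preparatory results you invoke (Lemma \ref{fi}, Lemma \ref{fll}, Theorem \ref{modi12}) are precisely the only ingredients whose proofs required modification in the non-stationary setting. One small refinement for (ii): since $Y^{s}=Z^{s}$ only up to an $s$-dependent null set (and $s$ ranges over an uncountable set), it is safer to define $V_m(s,\cdot)$ directly as the supremum over rational $x\neq y$ of the normalized increments of $Z^{s,\cdot}$ (as in \cite{Pr}), so that the inequality holds surely by continuity in $x$, joint measurability comes from the c\`adl\`ag dependence on $s$, and your Tonelli computation via \eqref{ri4} then yields $\int_0^T V_m(s,\omega)\,ds<\infty$ on an almost sure event $\Omega_2$.
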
 
\noindent  
{\bf  Proof of Theorem \ref{h322} .}  The  proof follows  {\!}    the   same lines  {\!}    of the one of Theorem 5.1 in \cite{Pr}, using the previous  {\!}    lemmas,  replacing $b, d$ and $L$ with $\tilde b, n$ and $\tilde M$ respectively.

\section{An example of  degenerate SDE } 

Let us  {\!}    consider
 \begin{equation}
\label{eq-SDE3}
\begin{cases}
d X_t   = V_t d t,\; \; \;\;\; X_0 =x \in \R^d
\\
d V_t   = F \left(  X_t,V_t \right)  d t + d W_{t}, \quad V_0  =v \in \R^d,\;  \; t \in [0,T].
\end{cases}
\end{equation}
where $W = (W_t)$    is  {\!}    a {\!} standard Wiener {\!}     process  {\!}    with values  {\!}    in ${\R}^{d}$ defined on  $(\Omega,{{\mathcal F}}, \P)$.  
  One can write   equation \eqref{eq-SDE3} in the form \eqref{d} with $n = 2d$ and $L = W$ by defining  $C \in \R^{2d} \otimes    \R^d$,  $A \in \R^{2d} \otimes    \R^{2d}$ and the drift $b: \R^{2d} \to {\!} \R^{2d}$ as  {\!}    follows  {\!}     
 \begin{equation} \label{nota}
C = \begin{pmatrix}    0  \\  I    \end{pmatrix},\;\;\; A = \begin{pmatrix}    	0 &   {I} \\	0 & 0 
       \end{pmatrix}, \;\; b(x,v) = \begin{pmatrix}    0  \\  F(x,v)    \end{pmatrix},\;\; (x,v) \in \R^{2d};
\end{equation} 
 here $I$ denotes  {\!}    the $d \times  d$ identity  matrix.   
  First we assume:  

  \vskip 1mm  \noindent 
{\bf (H)} $F : \R^{2d} \to {\!} \R^{d}$ is  {\!}    a {\!} bounded function and there exist $\beta' \in (0,1)$, $\gamma {\!} \in (2/3,1)$ and $C>0$ such that 
\begin{equation} \label{d2}
|F(x,v) - F(x',v')| \le C  (|x-x'|^{\gamma} + |v-v'|^{\beta'}),\;\;\;
 (x,v),\, (x',v') \in \R^{2d}.     
\end{equation}
 Note that {\bf (H)} implies  {\!}     assumption (H1) of \cite{CdR}  for {\!}     equation \eqref{eq-SDE3} (see also {\!} \cite{WaZa2} for {\!}     more general assumptions  {\!}    on \eqref{eq-SDE3}).    Thus  {\!}    under {\!}     {\bf (H)}     strong existence  and uniqueness  {\!}    hold on each $[0,T]$ by Theorem 1.1 in \cite{CdR}.  
 
 Adapting  the argument  of Section 1.6 in \cite{CdR} 
 from the case $p=2$ to {\!} the case $p >2$ 
 (or {\!}     applying formula {\!} (1.19) of Theorem 1.7 of \cite{WaZa2}) we obtain the following result. 
 \begin{proposition}\label{pr} 
  Let us  {\!}    assume {\bf (H)}.  
 Let $(Z_t^{(x,v)})$ be the unique  strong solution to {\!} \eqref{eq-SDE3} starting from  $(x,v) \in \R^{2d}$ at time $s=0$. Then, for {\!}     any $T>0$, $p \ge 2$, there exists  {\!}    $C_{T,p} >0 $ such that 
 \begin{equation*}  
\E\Big[ \sup_{t \in [0,T]}\big| Z_t^{(x,v)} - Z_t^{(x',v')} \big|^p \Big] \le C_{T,p}  ( |x-x'|^p + |v-v'|^p), \;\; (x,v),  (x',v') \in \R^{2d}. 
\end{equation*}
 \end{proposition}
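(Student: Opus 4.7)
The strategy is to adapt the Zvonkin-type (It\^o-Tanaka) transformation used in \cite{CdR} from the case $p=2$ to arbitrary $p \ge 2$. Since under \textbf{(H)} the drift $F$ is bounded and $\gamma$-H\"older in $x$ with $\gamma > 2/3$ (the critical index) and $\beta'$-H\"older in $v$, one may solve the associated regularizing kinetic parabolic system
$$
\partial_t u + \tfrac{1}{2}\triangle_v u + v \cdot \partial_x u + F \cdot \partial_v u - \lambda u = -F, \qquad u(T, \cdot)=0,
$$
on $[0,T]\times \R^{2d}$, with $\R^d$-valued $u$. The kinetic Schauder-type estimates proved in \cite{CdR} (or those in \cite{WaZa2}) yield, for $\lambda$ large enough, a solution $u$ in a suitable anisotropic H\"older class such that $u$, $\nabla_x u$, $\nabla_v u$ and $\nabla_v^2 u$ are bounded and, in addition, $\|u\|_\infty$ and $\|\nabla u\|_\infty$ can be made arbitrarily small by choosing $\lambda$ large.

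The next step is to introduce the transformation $\Phi_t(x,v) = (x,\, v + u(t,x,v))$, which for $\lambda$ large is a bi-Lipschitz $C^1$-diffeomorphism of $\R^{2d}$ uniformly in $t \in [0,T]$. Applying It\^o's formula to $Y_t := \Phi_t(Z_t^{(x,v)})$ and using the PDE to cancel out the singular drift containing $F$, one obtains an SDE
$$
dY_t = \tilde b(t,Y_t)\,dt + \tilde \sigma(t,Y_t)\,dW_t,
$$
where $\tilde b$ and $\tilde \sigma$ are globally Lipschitz in the space variable, uniformly in $t$ (the Lipschitz constants depending only on the bounds on $u$ and its derivatives from the Schauder estimates).

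Given two initial data $(x,v),(x',v')$, I would then apply the classical Gronwall/BDG scheme to the difference $Y_t^{(x,v)} - Y_t^{(x',v')}$: taking the $p$-th power of the running supremum, the Burkholder-Davis-Gundy inequality (valid for every $p\ge 2$) bounds the martingale part by the integrated squared difference of $\tilde \sigma$, the Lipschitz bounds on $\tilde b, \tilde\sigma$ close the loop, and Gronwall's lemma yields
$$
\E\big[\sup_{t \in [0,T]} |Y_t^{(x,v)} - Y_t^{(x',v')}|^p\big] \le C_{T,p}\, |\Phi_0(x,v) - \Phi_0(x',v')|^p.
$$
Since $\Phi_0$ and $\Phi_t^{-1}$ are Lipschitz uniformly in $t$, this estimate transfers back to $Z_t^{(x,v)}$, producing the desired bound with right-hand side $C_{T,p}(|x-x'|^p + |v-v'|^p)$.

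The main obstacle is of an a priori nature and already handled in the cited works: one needs the kinetic Schauder theory producing a solution $u$ with $\nabla_x u$ bounded, which is exactly where the condition $\gamma > 2/3$ becomes sharp (anisotropic scaling in position vs.\ velocity). Once the transformation $\Phi$ is available with these properties, the passage from $p=2$ (as in \cite{CdR}) to general $p\ge 2$ is routine: the $L^p$-version of BDG simply replaces the It\^o isometry used in the $L^2$-argument, and the Gronwall step is unchanged.
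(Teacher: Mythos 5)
Your route is precisely the one the paper intends: its entire ``proof'' is the sentence preceding the statement, namely that the estimate follows by adapting the Zvonkin-type transformation argument of Section 1.6 in \cite{CdR} from $p=2$ to $p>2$ (or by invoking formula (1.19) of Theorem 1.7 in \cite{WaZa2}), with the Burkholder--Davis--Gundy/Gronwall adaptation being exactly the routine step you describe. One caveat on your justification: the sharp role of $\gamma>2/3$ is not the boundedness of $\nabla_x u$ (which only requires $\gamma>1/3$) but the Lipschitz-in-$x$ regularity of $\nabla_v u$, i.e.\ of the transformed diffusion coefficient $I+\nabla_v u$ --- boundedness of $u$, $\nabla_x u$, $\nabla_v u$, $\nabla_v^2 u$ alone does not make $\tilde\sigma$ Lipschitz in the degenerate variable, and it is this refined anisotropic estimate from \cite{CdR}/\cite{WaZa2} that closes the Gronwall loop.
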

 We can prove Davie's  {\!}    uniqueness  {\!}    for {\!}     \eqref{eq-SDE3} applying Corollaries  {\!}     \ref{unb}  and \ref{cons}.
 
\begin{theorem} Let us  {\!}    consider {\!}     SDE \eqref{eq-SDE3} where  $W = (W_t)$    is  {\!}    a {\!} standard $\R^d$-valued Wiener {\!}     process  {\!}     defined on a {\!} probability space $(\Omega,{{\mathcal F}}, \P)$.

(i) Assume that $F : \R^{2d} \to {\!} \R^{d}$ is  {\!}    continuous  {\!}    and has  {\!}    at most a {\!} linear {\!}     growth (i.e., there exists  {\!}    $c>0$ such that $|F(x,v)| \le c (1 + |x|   + |v|)$,  $x, v \in \R^d$). 

(ii) Assume that there exist $\beta' \in (0,1)$ and  $\gamma {\!} \in (2/3,1)$ such that  for {\!}     any   $\eta \in C_0^{\infty}(\R^{2d})$ the function $\eta \cdot F : \R^{2d} \to {\!} \R^d$ verifies  {\!}    {\bf (H)} when $F$ is  {\!}    replaced by $\eta \cdot F$. 

\smallskip 
Then,    
 there exists  {\!}    an almost sure  event $\Omega' \in {\mathcal F}$  such that for {\!}     $\omega \in \Omega'$, $(x,v) \in {{\R}}^{2d}$,  the following integral equation in the unknown function ${z(t) = (x(t), v(t)) \in \R^{2d}}$      
\begin{gather*}  
\begin{cases}
x(t) =  x + t v 
+ \int_0^t (t-s) F(x(s), v(s)) \, ds  {\!}     + \int_0^t  W_s(\omega) \, d s  {\!}    
\\ 
v(t) = v + \int_0^t F(x(s), v(s)) \,  ds  {\!}    + W_t(\omega),   
\end{cases}     
\end{gather*}
 has  {\!}    {exactly} one solution $z(t)$ in $C([0,T];$ ${{\R}}^{2d})$. 
\end{theorem}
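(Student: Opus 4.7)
The plan is to cast \eqref{eq-SDE3} in the form \eqref{d} with $n=2d$, $L=W$, and $A, C, b$ as in \eqref{nota} (so $\sigma(t)\equiv C$), and then to deduce the claim by combining Corollary \ref{unb} (path-by-path uniqueness) with Corollary \ref{cons} (existence of a strong solution constructed pathwise). First I would observe that the integral system in the statement is obtained from \eqref{integral} (with $s_0=0$ and $M_t=C W_t$) by substituting the equation for $v(t)$ into $x(t)=x+\int_0^t v(r)\,dr$ and applying Fubini; any candidate $z(t)=(x(t),v(t))$ solves one if and only if it solves the other, so it suffices to establish existence and uniqueness of a continuous solution to \eqref{integral} under the form \eqref{d}.

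The main task is then to verify the hypotheses of Corollary \ref{unb}. Condition (ii) holds trivially since $\E|W_1|^\theta<\infty$ for every $\theta\in(0,1)$. For condition (i), the local H\"older structure of $\rho\cdot b$ for $\rho\in C_0^\infty(\R^{2d})$ follows from assumption (ii) of the theorem by taking $\beta=\gamma\wedge\beta'$, because $\rho\cdot b(x,v)=(0,\rho(x,v)F(x,v))$. The hard part will be checking (iii): for every $\eta\in C_0^\infty(\R^{2d})$, the localized SDE with drift $\eta\cdot b$ must satisfy Hypothesis \ref{qq} uniformly in the starting time $s\in[0,T]$. My plan is to note that, by assumption (ii), $\eta\cdot F$ satisfies \textbf{(H)}, so Theorem 1.1 of \cite{CdR} yields strong existence and uniqueness for the localized system, and Proposition \ref{pr} (applied with $\eta\cdot F$ in place of $F$) delivers the $L^p$-Lipschitz bound on initial data at $s=0$. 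Lemma \ref{basic}, which is tailored precisely to this passage, then promotes existence and the $L^p$-Lipschitz estimate from $s=0$ to arbitrary $s\in[0,T)$, giving Hypothesis \ref{qq} in full.

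Once Corollary \ref{unb} is in force it produces an almost sure event on which c\`adl\`ag solutions of \eqref{integral} are unique. To obtain existence I would invoke Corollary \ref{cons}: its conditions (i)--(iii) are exactly those already verified, while the linear growth bound $|b(x,v)|=|F(x,v)|\le c(1+|x|+|v|)\le\sqrt{2}\,c\,(1+|(x,v)|)$ is immediate from assumption (i) of the theorem. Intersecting the two almost sure events, and observing that continuity of $F$ forces any c\`adl\`ag solution of the integral system to lie in $C([0,T];\R^{2d})$, completes the argument. The only delicate point throughout is the verification of Hypothesis \ref{qq} for the localizations, which rests entirely on the quantitative theory for degenerate kinetic SDEs developed in \cite{CdR} and \cite{WaZa2}; everything else is routine bookkeeping translating the general Corollaries \ref{unb} and \ref{cons} to the present kinetic setting.
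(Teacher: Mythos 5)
Your proposal is correct and follows essentially the same route as the paper: write \eqref{eq-SDE3} in the form \eqref{d} via \eqref{nota}, check condition (i) of Corollary \ref{unb} with $\beta=\beta'\wedge\gamma$, note (ii) is trivial for the Wiener process, verify (iii) for each localization $\eta\cdot F$ through Proposition \ref{pr} together with Lemma \ref{basic} (promoting the $s=0$ estimates to arbitrary $s$), and then conclude by Corollaries \ref{unb} and \ref{cons}. The only additions beyond the paper's argument are the explicit Fubini reduction of the displayed integral system to \eqref{integral} and the remark on continuity of solutions, both of which the paper leaves implicit.
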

\begin{proof}
 With the notations  {\!}    in \eqref{nota} one can    check all the assumptions  {\!}    of  Corollary \ref{unb}  about SDE \eqref{eq-SDE3}. 
  Indeed  hypothesis  {\!}    (i) of Corollary \ref{cons} holds  {\!}    with $\beta = \beta' \wedge \gamma$. The integrability condition (ii) is  {\!}    clearly satisfied by the Wiener {\!}     process  {\!}    $W$. 
 Let us  {\!}    check   condition (iii). 
   
 For {\!}     any $\rho {\!} \in C_0^{\infty}(\R^{2d})$, we know that 
 $F \cdot \rho$ verifies  {\!}    {(\bf H)}. By Proposition \ref{pr} and Lemma {\!} \ref{basic} 
  we find that the SDE
 $$
 \begin{cases}
d X_t   = V_t d t,\; \; \;\;\; X_0 =x \in \R^d
\\
d V_t   = (F\cdot \rho) \left(  X_t,V_t \right)  d t + d W_{t}, \quad V_0  =v \in \R^d.
\end{cases}
 $$
 verifies  {\!}    Hypothesis  {\!}    \ref{qq}. This  {\!}    shows  {\!}    that condition (iii) holds. 
  By Corollaries  {\!}    \ref{unb} and \ref{cons}  we obtain  the assertion.
\end{proof}

\begin{remark} \label{forse} {\em
 One  could  write \eqref{eq-SDE3} as
 $
 dZ_t =   \begin{pmatrix}    V_t  \\ F(Z_t)    \end{pmatrix} dt + dL_t
 $
 with $Z_t = \begin{pmatrix}    X_t  \\  V_t    \end{pmatrix}$    and $L_t =\begin{pmatrix}    0  \\  W_t    \end{pmatrix} $ in order {\!}      to {\!} try to {\!} apply directly the results  {\!}    in \cite{Pr} to {\!} get Davie's  {\!}    uniqueness. However, a {\!} difficulty appears.  Assume that  $F$ verifies  {\!}     {\bf (H)}.  Since 
 the drift $b (x,v)=  \begin{pmatrix}    v  \\ F(x,v)    \end{pmatrix} $ is  {\!}    not bounded one
   should  truncate such  drift and localize according to {\!} 
 Corollary 5.4 in \cite{Pr}. A possible strategy 
         would be to {\!} look for {\!}     
 approximating  bounded    drifts  {\!}     like 
 $
 b_n(x,v) 
  =  \begin{pmatrix}     \eta_n(v)  \\ F(x,v)    \end{pmatrix},$ $ (x,v) \in {\R}^{2d},
 $  $n \ge 1.$  
 However {\!}     since $\eta_n$ is  {\!}    bounded it  cannot  satisfy  assumption (H3-b) in \cite{CdR}; this  {\!}    hypothesis  {\!}      is  {\!}    needed  to {\!} prove  strong uniqueness  {\!}    for {\!}     the approximating SDE $dZ_t^n = b_n(Z^n_t)dt  + dL_t$. 
  }
\end{remark}

\begin{remark} {\em One      can   obtain  Davie's  {\!}    type uniqueness  {\!}    results  {\!}     for {\!}     degenerate SDEs  {\!}    more general  than \eqref{eq-SDE3}, starting from known pathwise uniqueness  {\!}    results  {\!}    available in the literature (cf. \cite{CdR}, \cite{WaZa2}, \cite{men} and see the references  {\!}    therein).  
 For {\!}     instance, one could     consider {\!}     SDEs  {\!}    in $\R^{3d}$ like  
  \begin{equation} \label{dee}
\begin{cases}
dX_t =  F(X_t, Y_t, Z_t) dt + dW_t,
\\
 d Y_t   = X_t dt +  G(Y_t, Z_t) d t,   
 \\ 
 d Z_t = Y_t  d t + H(Z_t)dt.    
\;\;\;  X_0    =x \in \R^d, \;\; Y_0  = y\in \R^d, \; Z_0  =z \in \R^d.
\end{cases}
\end{equation}
 Such equations  {\!}    are a {\!} special case  of singular {\!}     degenerate SDEs  {\!}    considered in \cite{men}.
In \cite{men} there are conditions  {\!}    on $F,G$ and $H$ such that strong uniqueness  {\!}    holds  {\!}    for {\!}     \eqref{dee}.   
}
\end{remark}
 
\def\ciaoo{


 
%






\authorrunninghead{Metafune, Pallara, Priola}
\titlerunninghead{Ornstein-Uhlenbeck operators}

\title{Spectrum of Ornstein-Uhlenbeck operators
in $L^p$ spaces  {\!}    with respect to {\!} invariant measures}

\author{G. Metafune, D. Pallara}
\affil{Dipartimento {\!} di {\!}  Matematica {\!} ``Ennio {\!} De
Giorgi'', Universit\`a {\!} di {\!}  Lecce, C.P.193, 73100, Lecce, Italy}
\email{e-mail: metafune@le.infn.it, pallara@le.infn.it}
\and
\author{E. Priola\thanks{Partially supported by  the Italian National
Project  MURST ``Analisi {\!}  e controllo {\!} di {\!}  equazioni {\!}  di {\!}  evoluzione
deterministiche e stocastiche''}}
\affil{Dipartimento {\!} di {\!}  Matematica, via {\!} Carlo {\!} Alberto, 10, 10123 Torino, Italy}
\email{priola@dm.unito.it}

\abstract{Let $A=\sum_{i,j=1}^N q_{ij}D_{ij}+\sum_{i,j=1}^{N}b_{ij}x_j D_i$
be a {\!} possibly degenerate Orn\-stein-Uhlenbeck operator {\!}     in $\re^N$ and
assume that the associated Markov semigroup has  {\!}    an invariant measure $\mu$.
We compute the spectrum of $A$ in $L^p_\mu$ for {\!}     $1\leq p<\infty$.}

\keywords{Ornstein-Uhlenbeck operators, invariant measures, transition
semigroups}

\begin{article}

\section{Introduction}

In this  {\!}    paper {\!}     we study the spectrum of the Ornstein-Uhlenbeck
operator
\begin{equation}               \label{OU}
A=\sum_{i,j=1}^N q_{ij}D_{ij}+\sum_{i,j=1}^{N}b_{ij}x_j D_i={\rm
Tr}(QD^2)+\langle Bx,D \rangle ,\ \ x\in \re^N,
\end{equation}
where $Q=(q_{ij})$ is  {\!}    a {\!} real, symmetric and nonnegative matrix and
$B=(b_{ij})$ is  {\!}    a {\!} non-zero {\!} real matrix.  The associated Markov semigroup
$\Tt$ has  {\!}    the following explicit representation, due to {\!} Kolmogorov
\begin{equation}               \label{OUS}
(T(t)f)(x)=\frac{1}{(4\pi)^{N/2}(
\det Q_t)^{1/2}}\int_{\re^N }e^{-<Q_t^{-1}y,y>/4} f(e^{tB}x-y)\, dy,
\end{equation}
where
$$
Q_t=\int_0^te^{sB}Qe^{sB^*}\, ds
$$
and $B^*$ denotes  {\!}    the adjoint matrix of $B$, see for {\!}     instance
\cite{DPL}. We assume that the spectrum
of $B$ is  {\!}    contained in  $\cc^- =\{ \lambda {\!} \in \cc:\; $Re
$(\lambda) <0 \}$.  Moreover {\!}     we  require that $\det Q_t >0$ for
any $t>0$ (that is, $Q_t$ is  {\!}    positive
definite); this  {\!}    is  {\!}    clearly true, in particular, if $Q$ is  {\!}    invertible.
We point out that the condition $\det Q_t >0$, $t>0$, is
equivalent to {\!} the hypoellipticity of $A$, see
\cite{Hormander}, and it can be also {\!} expressed by saying
that the kernel of $Q$ does  {\!}    not contain any invariant subspaces  {\!}    
of $B^*$ (see \cite{Hormander,LanPol,Lunardi2,priola1}).

Assuming that $\det(Q_t)>0$, in \cite[section 11.2.3]{DPZ2} it
is  {\!}    proved that  $\sigma(B)\subset \cc^-$  is  {\!}    equivalent to
the existence of an invariant measure $\mu$ for {\!}     $T_t$, i.e.
a {\!} probability measure on $\re^N$ such that
$$
\int_{\re^N}\bigl(T(t)f\bigr)(x)\, d\mu (x)
=\int_{\re^N} f(x)\, d\mu (x),
$$
for {\!}     every $t\geq 0$ and $f\in C_b(\re^N)$, the space of all continuous
and bounded functions  {\!}    on $\re^N$. Moreover {\!}      the invariant measure
$\mu$ is  {\!}    unique and it is  {\!}    given by $d\mu(x)=b(x)\, dx$ where
\begin{equation}                \label{defb}
b(x)=\frac{1}{(4\pi)^{N/2}(\det Q_\infty)^{1/2}}e^{-<Q_\infty^{-1}x,x>/4}
\end{equation}
and
$$
Q_\infty=\int_0^\infty e^{sB}Qe^{sB^*}\, ds.
$$
For {\!}     more information on invariant measures  {\!}    we refer {\!}     to
\cite{DPZ,KryRockZa}.  It is  {\!}    well known that
$\Tt$ extends  {\!}    to {\!} a {\!} strongly continuous  {\!}    semigroup of positive
contractions  {\!}    in $L^p_\mu=L^p(\re^N,d\mu)$ for {\!}     every
$1\leq p <\infty$.  Remark that, since $Q_t <Q_\infty$
in the sense of quadratic forms,  the integral in (\ref{OUS})
converges  {\!}    for {\!}     every $f\in L^p_\mu$ and $x \in \re^N$, so {\!} that
the extension of $\Tt$ to {\!} $L^p_\mu$ is  {\!}    still given by (\ref{OUS}).

Let us  {\!}    denote by $(A_p,D_p)$ the generator {\!}     of $\Tt$ in $L^p_\mu$.
The main aim of this  {\!}    paper {\!}     is  {\!}    the computation of the spectrum
of $(A_p,D_p)$ for {\!}     $1\leq p<\infty$.  If $1<p<\infty$, it is
known that the spectrum is  {\!}    discrete and consists  {\!}    of
eigenvalues  {\!}    of finite multiplicities, since the resolvent
is  {\!}    compact, see \cite{ChoGol}. We first prove  that all
the eigenfunctions  {\!}    are polynomials  {\!}    and then we arrive at
a {\!} complete characterization of the spectrum, see Section 3.
Our {\!}     method  shows  {\!}    that  it is  {\!}    possible  to {\!} reduce the computation
of the spectrum of $A$ to {\!} that of its  {\!}    drift term
$\langle Bx,D \rangle $, no {\!} matter {\!}     what the diffusion term
${\rm Tr}(QD^2)$ is, see in particular {\!}     Lemma {\!} \ref{riduzione}.

As  {\!}    a {\!} by-product of our {\!}     proof, we also {\!} show that the spectrum
is  {\!}    independent of $p \in ]1,\infty[$ (the $p$-independence of
the spectrum is  {\!}    however {\!}     a {\!} consequence of the compactness  {\!}    of the
resolvent, see e.g. \cite{Arendt}).  For {\!}     $p=1$ we obtain that
the spectrum is  {\!}    completely different, see Section 4.
The spectrum in $L^1_\mu$ is  {\!}    the closed left half-plane and
moreover {\!}     every complex number {\!}     with negative real part
is  {\!}    an eigenvalue. Let us  {\!}    stress  {\!}     that we allow $Q$ to {\!}  have
rank strictly less  {\!}    than $N$; however {\!}      our {\!}      main result seems
to {\!} be new even in the non-degenerate case, that is  {\!}    when $Q$
is  {\!}    positive definite.

Let us  {\!}    mention another {\!}     result of the paper.
Assuming that $A$ is  {\!}    nondegenerate in \cite{Fuh} it is
shown that $T_t$ is  {\!}    analytic in $L^p_{\mu}$ even in the
infinite dimensional setting, $1<p<\infty $ (see also
\cite{DPL,Lunardi, Goldys}). Under {\!}     our {\!}     assumptions, in
Section 2 we show that the semigroup $T_t$  is  {\!}    differentiable in
$L^p_\mu$, for {\!}     $1<p<\infty$;  obviously, it is  {\!}    not so {\!} in
$L^1_\mu$ (see also {\!} Corollary \ref{corL1}).

We remark that in the particular {\!}     case $Q=I$, $B=-I$, it is  {\!}    known
that the spectrum in $L^2_\mu$ consists  {\!}    of the negative
integers  {\!}    and that the Hermite polynomials  {\!}    form a {\!} complete
system of eigenfunctions.  Moreover, the operator {\!}     $-A_2$
on $L^2_\mu$ is  {\!}    unitarily equivalent to {\!} a {\!} Schr\"odinger
operator {\!}     $-\Delta {\!} +V$ on $L^2(\re^N,dx)$, where $V$ is  {\!}    a {\!} quadratic
potential (see \cite{Meyer,Bakry}).
Finally we refer {\!}     to {\!} \cite{Metafune} for {\!}     the spectrum of $A$
in $L^p(\re^N,dx)$ and in spaces  {\!}    of continuous  {\!}    functions.

\bigskip\noindent
{\bf Notation.} If $C$ is  {\!}    a {\!} linear {\!}     operator, we denote by $\sigma(C)$,
$P\sigma(C)$ and $\rho(C)$, the spectrum, the point-spectrum and the
resolvent set of $C$, respectively.  The spectral bound $s(C)$ is  {\!}    defined
by $s(C)=\sup \{{\rm Re} \lambda {\!} :  \lambda {\!} \in \sigma  (C) \}$.
$C_b(\re^N)$ stands  {\!}    for {\!}     the Banach space of all real
continuous  {\!}    and bounded functions  {\!}    on
$\re^N$.  $C_0(\re^N)$ is  {\!}    the closed
subspace of $C_b(\re^N)$ of functions  {\!}    vanishing at infinity, $C_0^\infty
(\re^N)$ is  {\!}    the space of $C^\infty$-functions  {\!}    with compact support and
${\cal S}(\re^N)$ is  {\!}    the Schwartz class.  ${\cal P}_n$ is  {\!}    the space of all
polynomials  {\!}    of degree less  {\!}    than or {\!}     equal to {\!} $n$.  For {\!}     $1\leq p<\infty$ and
$k\in \nat$, $W^{k,p}(\re^N)$ are the usual Sobolev spaces, and we define
\begin{equation}                             \label{eq1}
W^{k,p}_\mu =\{u\in W^{k,p}_{\rm loc}(\re^N): D^\alpha {\!} u \in L^p_\mu \
{\rm for\ |\alpha|} \leq k \}.
\end{equation}
The norm in $L^p_\mu$ will be denoted by $\|\cdot \|_p$.  Sometimes  {\!}    we
write $A_p$ for {\!}     $(A_p,D_p)$.  Throughout this  {\!}    paper {\!}     $\nat$ indicates  {\!}    
the set of nonnegative integers  {\!}    and $\cc^-$, $\cc^+$ the open left and right
half-planes, respectively.

\section{Properties  {\!}    of $\Tt$}
In this  {\!}    section we collect some properties  {\!}    of $\Tt$ and of its  {\!}    generator
$(A_p,D_p)$ needed in the sequel.

We observe that $C_0^\infty (\re^N)$ is  {\!}    dense in $W^{k,p}_\mu$, $1\leq
p<\infty$ .  Indeed, a {\!} simple truncation argument shows  {\!}    that the set of
$W^{k,p}_\mu$-functions  {\!}    with compact support is  {\!}    dense and, given $u\in
W^{k,p}_\mu$ with compact support, the usual approximating functions
$\phi_\eps  {\!}    * u$ converge to {\!} $u$, as  {\!}    $\eps  {\!}    \to {\!} 0$, in $W^{k,p}(\re^N)$ and
hence in $W^{k,p}_\mu$.

As  {\!}    regards  {\!}    the domains  {\!}    $D_p$, we remark that $D_p \subset D_q$ if $p \geq
q$ and $A_pu=A_qu$ for {\!}     $u\in D_p$.  If $Q$ is  {\!}    non-degenerate, the domain
$D_2$ is  {\!}    nothing but the weighted Sobolev space $W^{2,2}_\mu$ and $A_2u=
Au$ for {\!}     $u\in D_2$ (see \cite{Lunardi}).  A similar {\!}     result seems  {\!}    not to {\!} be
known in the general case when $p\neq 2$.  However, $D_p=W^{2,p}_\mu$ if
$(A_2,D_2)$ is  {\!}    self-adjoint; this  {\!}    fact turns  {\!}    out to {\!} be equivalent to {\!} the
identity $BQ=QB^*$ and implies  {\!}    $Q$ positive definite
(see \cite{ChoGol2,ChoGol3}).

For {\!}     our {\!}     purposes, we only need the following simple lemma.
\begin{lemma}                  \label{dominio}
Let $1\leq p <\infty$.  If $u\in C^\infty(\re^N)$ is  {\!}    such that $D_{ij}u \in
L^p_\mu$ for {\!}     $i,j=1,\dots,N$ and $|x||D u|\in L^p_\mu$, then $u\in D_p$ and
$A_pu=Au$.  Moreover, the Schwartz class  {\!}    ${\cal S}(\re^N)$ is  {\!}    a {\!} core for
$(A_p,D_p)$.
\end{lemma}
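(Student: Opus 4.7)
The plan is to prove the first assertion by a cutoff approximation combined with closedness of $A_p$, and then to derive the core property from the invariance $T(t)\mathcal{S}\subset\mathcal{S}$ via the standard core criterion for strongly continuous semigroups.

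For the first assertion, I fix $\eta\in C_0^\infty(\re^N)$ with $\eta\equiv 1$ on $B_1$ and $\mathrm{supp}\,\eta\subset B_2$, and set $\eta_n(x)=\eta(x/n)$, $u_n=\eta_n u\in C_0^\infty(\re^N)$. I will use the inclusion $C_0^\infty(\re^N)\subset D_p$ with $A_p u_n=Au_n$, which can be obtained by applying It\^o's formula to the Ornstein--Uhlenbeck process associated with $T(t)$ to verify $T(t)u_n-u_n=\int_0^t T(s)Au_n\,ds$ in $L^p_\mu$. By the Leibniz rule,
\[
A u_n=\eta_n\, Au+u\,A\eta_n+2\sum_{i,j}q_{ij}\,D_i\eta_n\,D_j u,
\]
with $|D\eta_n|\le C/n$, $|D^2\eta_n|\le C/n^2$, and supports contained in the annulus $\{n\le|x|\le 2n\}$. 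On this annulus $|A\eta_n|\le C$ (the linear growth of the drift is absorbed by the factor $1/n$), so $|u\,A\eta_n|\le C|u|\mathbf{1}_{\{|x|\ge n\}}$, and $|q_{ij}\,D_i\eta_n\,D_j u|\le (C/n^2)|x||Du|\mathbf{1}_{\{|x|\ge n\}}$ using $|x|\ge n$ there. Under the assumptions $u\in L^p_\mu$, $|x||Du|\in L^p_\mu$ and $D_{ij}u\in L^p_\mu$, dominated convergence gives $u_n\to u$ and $A u_n\to Au$ in $L^p_\mu$. Closedness of $A_p$ then yields $u\in D_p$ and $A_pu=Au$.

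For the core claim, I first observe $\mathcal{S}(\re^N)\subset D_p$: if $u\in\mathcal{S}$ its second derivatives are Schwartz and $|x||Du|$ is bounded by a rapidly decaying function, so the integrability hypotheses of the first part hold and $u\in D_p$ with $A_p u=Au$. Next I will establish $T(t)\mathcal{S}\subset\mathcal{S}$ for every $t\ge 0$. Setting $G_t(y)=(4\pi)^{-N/2}(\det Q_t)^{-1/2}e^{-\langle Q_t^{-1}y,y\rangle/4}$ (well-defined since $\det Q_t>0$ for $t>0$), the change of variables $z=e^{tB}x-y$ in \eqref{OUS} rewrites the semigroup as $T(t)f(x)=(G_t*f)(e^{tB}x)$. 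Since $G_t\in\mathcal{S}$, convolution $G_t*f\in\mathcal{S}$ whenever $f\in\mathcal{S}$, and $\mathcal{S}$ is invariant under composition with the invertible linear map $e^{tB}$, the invariance $T(t)\mathcal{S}\subset\mathcal{S}$ follows.

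Finally, $\mathcal{S}$ is dense in $L^p_\mu$ (as $C_0^\infty\subset\mathcal{S}$ already is), is contained in $D_p$, and is $T(t)$-invariant; the standard core criterion for generators of strongly continuous semigroups then shows that $\mathcal{S}$ is a core for $(A_p,D_p)$. The only delicate preliminary step is the inclusion $C_0^\infty(\re^N)\subset D_p$ with $A_p\phi=A\phi$, required to start the cutoff argument: this identification of the generator with the pointwise differential operator on smooth compactly supported functions must be stated cleanly (via It\^o's formula or a direct expansion of $T(t)\phi-\phi$ using the Mehler formula) before the closedness argument applies; I expect this verification to be the main routine obstacle in the writeup.
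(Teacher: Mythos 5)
Your proof is correct and takes essentially the same route as the paper: a cutoff $u_n=\phi(\cdot/n)\,u$ combined with dominated convergence and the closedness of $(A_p,D_p)$ for the first assertion, then the inclusion $\mathcal{S}\subset D_p$, the $T(t)$-invariance of the Schwartz class, and the standard core criterion for the second. You merely spell out steps the paper treats as elementary, namely the verification that $C_0^\infty(\mathbb{R}^N)\subset D_p$ with $A_p=A$ there and the invariance $T(t)\mathcal{S}\subset\mathcal{S}$ via the convolution form of the kernel.
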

\begin{proof}Observe that $Au\in L^p_\mu$.  Let $0\leq \phi {\!}  \in C_0^\infty
(\re^N)$ be such that $\phi(x)=1$ if $|x| \leq 1$ and define $u_n(x)=\phi
(x/n)u(x)$.  It is  {\!}    easily seen, using dominated convergence, that $u_n \to
u$ and $Au_n \to {\!} Au$ in $L^p_\mu$.  Since $u_n \in C_0^\infty(\re^N)$, it
is  {\!}    elementary to {\!} check that $(T(t)u_n-u_n)/t \to {\!} Au_n$ uniformly (hence in
$L^p_\mu$) as  {\!}    $t \to {\!} 0$.  Therefore, $u_n \in D_p$ and the equality
$Au_n=A_pu_n$ holds.  Letting $n\to {\!} \infty$ we obtain that $u\in D_p$ and
that $A_pu=Au$, since $(A_p,D_p)$ is  {\!}    closed.  Finally, since ${\cal
S}(\re^N)$ is  {\!}    contained in $ D_p$ and is  {\!}    $T(t)$-invariant, it is  {\!}    a {\!} core for
$(A_p,D_p)$.
\end{proof}

We discuss  {\!}    now some smoothing properties  {\!}    of $\Tt$, depending upon the
hypoellipticity condition $\det Q_t>0$. To {\!} this  {\!}    purpose, it is  {\!}    useful
to {\!} recall that the above condition is  {\!}    also {\!} equivalent to
the well-known Kalman rank condition
$$
{\rm rank \ } \Bigl [Q^{1/2},BQ^{1/2}, \dots, B^{N-1}Q^{1/2} \Bigr]=N,
$$
arising in control theory (see e.g.  \cite{Za}).  In the above formula, the
$N\times  N^2$ matrix in the left-hand-side is  {\!}    obtained by writing
consecutively the columns  {\!}    of the matrices  {\!}    $B^iQ^{1/2}$.  Moreover, 
if $0\leq m \leq N-1$ is  {\!}    the smallest integer {\!}     such that 
${\rm rank \ } \Bigl[Q^{1/2},BQ^{1/2}, \dots, B^{m}Q^{1/2} \Bigr]=N$, 
then
\begin{equation}                \label{eq0}
\|Q_t^{-1/2}e^{tB}\| \leq \frac{C}{t^{1/2 +m}},\qquad t \in (0,1]
\end{equation}
(see \cite{Seid}). Of course $m=0$ if and only if $Q$ is  {\!}    invertible.

The following lemma {\!} is  {\!}    a {\!} slight modification of a {\!} result proved, in
the infinite-dimensional setting, in \cite[Lemma {\!} 3]{ChoGol}. We give the
proof for {\!}     completeness.  The number {\!}     $m$ which appears  {\!}    in the statement is
that defined above.

\begin{lemma}                      \label{regularity}
Let $1<p<\infty$.  For {\!}     every $t>0$, $T(t)$ maps  {\!}    $L^p_\mu$ into
$C^\infty(\re^N) \cap W^{k,p}_\mu$ for {\!}     every $k\in \nat$.  Moreover, there
exists  {\!}    $C=C(k,p) >0$ such that for {\!}     every $f\in L^p_\mu$ the inequality
$$
\|D^\alpha {\!} T(t)f\|_p \leq
\frac{C}{t^{|\alpha|(1/2+m)}}\|f\|_p, \qquad t \in (0,1)
$$
holds  {\!}    for {\!}     every multiindex $\alpha$ with $|\alpha|=k$.
\end{lemma}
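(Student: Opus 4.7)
Plan. My approach combines Mehler's probabilistic representation with Gaussian integration by parts and the semigroup property. The change of variable $y = \sqrt{2Q_t}\,\eta$ in the explicit kernel (admissible because $\det Q_t>0$) yields Mehler's formula
\begin{equation*}
(T(t)f)(x) \;=\; \E\big[f(e^{tB}x + \sqrt{2Q_t}\,\eta)\big], \qquad \eta \sim N(0,I)\ \text{on}\ \R^N.
\end{equation*}
Smoothness $T(t)f \in C^\infty(\R^N)$ for every $f \in L^p_\mu$ is then immediate: the Gaussian kernel is $C^\infty$ in $x$ with derivatives bounded by polynomial-times-Gaussian factors in $y$, and the Gaussian tails of $\mu$ (hence of $L^p_\mu$-data after H\"older) make those integrable, so one may differentiate under the integral to arbitrary order.

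For the first-order estimate I would differentiate Mehler's formula to get $\nabla T(t)f(x) = (e^{tB})^{*}\E[(\nabla f)(X_t)]$ with $X_t = e^{tB}x + \sqrt{2Q_t}\eta$, then invert the chain rule via $(\nabla f)(X_t) = (\sqrt{2Q_t})^{-1}\nabla_\eta [f(X_t)]$ and apply Stein's identity $\E[\nabla_\eta g(\eta)] = \E[g(\eta)\,\eta]$ to reach
\begin{equation*}
\nabla T(t)f(x) \;=\; (e^{tB})^{*}(\sqrt{2Q_t})^{-1}\,\E\big[f(X_t)\,\eta\big],
\end{equation*}
a formula which is valid for all $f \in L^p_\mu$ after a density argument with smooth $f$. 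The operator norm of the prefactor equals $\|Q_t^{-1/2}e^{tB}\|/\sqrt 2$, which by the Kalman-type bound recalled just before the lemma is $O(t^{-(1/2+m)})$ on $(0,1]$. Applying H\"older's inequality inside the expectation in the conjugate exponents $p$ and $p'$ (using the finiteness of all Gaussian moments of $\eta$, which is where $p>1$ enters) gives the pointwise bound $|\nabla T(t)f(x)|^p \le C\, t^{-p(1/2+m)} (T(t)|f|^p)(x)$; integrating against $\mu$ and invoking the invariance $\int T(t)|f|^p\,d\mu = \|f\|_p^p$ closes the gradient estimate $\|\nabla T(t)f\|_p \le C\, t^{-(1/2+m)}\|f\|_p$.

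Higher-order estimates (and, with them, the $W^{k,p}_\mu$-membership) then come by iterating the gradient bound through the semigroup property $T(t) = T(t/k)^k$: each of the $k$ factors $T(t/k)$ absorbs one derivative at cost $C(t/k)^{-(1/2+m)}$, so multiplication yields $\|D^\alpha T(t)f\|_p \le C\, t^{-|\alpha|(1/2+m)}\|f\|_p$ for $t \in (0,1)$. The main obstacle is the gradient estimate, and inside it the decisive ingredient is the hypoellipticity/Kalman bound $\|Q_t^{-1/2}e^{tB}\| = O(t^{-(1/2+m)})$; this is precisely what generates the exponent $1/2 + m$ in place of the classical $1/2$ valid in the nondegenerate case $m=0$. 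The restriction $p>1$ is essential for the H\"older step and cannot be removed by this method, consistent with the known failure of analogous smoothing in $L^1_\mu$.
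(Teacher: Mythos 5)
Your proposal is correct and takes essentially the same route as the paper: the Mehler-formula/Stein-identity computation is just the probabilistic form of differentiating the explicit Gaussian kernel under the integral sign (justified there by $Q_t<Q_\infty$, which dominates the kernel by the invariant density), and the rest of your $k=1$ step — H\"older with exponents $p,p'$ (where $p>1$ enters), the hypoellipticity/Kalman bound $\|Q_t^{-1/2}e^{tB}\|\le C\,t^{-1/2-m}$, and integration of $|\nabla T(t)f|^p\le C t^{-p(1/2+m)}T(t)|f|^p$ against the invariant measure — is exactly the paper's argument. For $k\ge 2$ the paper iterates (following Lunardi) by means of the identity $DT(t)u=e^{tB^*}T(t)Du$; your splitting $T(t)=T(t/k)^k$ is the same iteration, and the claim that ``each factor absorbs one derivative'' is legitimate precisely because of that commutation identity, which you have in effect already derived in the form $\nabla T(t)f=(e^{tB})^{*}\,\E[(\nabla f)(X_t)]$ and which must be extended from smooth $f$ to $W^{1,p}_\mu$ by density before being used on $g=T(t/k)f$.
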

\begin{proof} Let us  {\!}    fix $t>0$ and set
$$
b_t(x)=\frac{1}{(4\pi)^{N/2}(\det Q_t)^{1/2}}e^{-<Q_t^{-1}x,x>/4}.
$$
Since $Q_t <Q_\infty$, in the sense of quadratic forms, it is  {\!}    easily seen
that there exist $K, \eps  {\!}    >0$ (depending upon $t$) such that $b_t(x) \leq
Ke^{-\eps  {\!}    |x|^2}b(x)$, where $b$ (defined in (\ref{defb})) is  {\!}    the density
of $\mu$.  It follows  {\!}    that one can differentiate under {\!}     the integral sign in
(\ref{OUS}) for {\!}     every $f\in L^p_\mu$ thus  {\!}    obtaining
$$
\bigl(D T(t)f \bigr {\!}     )(x)=- \frac{1}{2}
\int_{\re^N}e^{tB^*}Q_t^{-1}yf\bigl(e^{tB}x-y\bigr {\!}     ) b_t(y)\, dy
$$
for {\!}     every $x\in \re^N$ and hence $T(t)f \in C^1(\re^N)$.  By H\"older
inequality and (\ref{eq0})
\begin{eqnarray*}
&&|(D_iT(t))f(x)|
\\
&\leq& \frac{1}{2}
\Bigl(\int_{\re^N}|\langle Q_t^{-1/2}e^{tB}e_i, Q_t^{-1/2} y
\rangle |^{p'}b_t(y)\, dy\Bigr)^{1/p'}\Bigl
((T(t)|f|^p)(x)\Bigr)^{1/p}
\\
&\leq& \frac{1}{2}
|Q_t^{-1/2}e^{tB}e_i|\Bigl (\int_{\re^N}|Q_t^{-1/2}y|^{p'}b_t(y)\, dy
\Bigr)^{1/p'}
\Bigl ((T(t)|f|^p)(x)\Bigr)^{1/p} \\
&\leq &
C_p t^{-1/2-m}\Bigl ((T(t)|f|^p)(x)\Bigr)^{1/p}
\end{eqnarray*}
and the thesis  {\!}    follows  {\!}    for {\!}     $k=1$ raising to {\!} the power {\!}     $p$ and integrating
the above inequality with respect to {\!} $\mu$.  The proof for {\!}     $k\geq 1$
proceeds  {\!}    as  {\!}    in \cite[Lemma {\!} 3.2]{Lunardi} using the equality
$DT(t)u=e^{tB^*}T(t)Du$, which holds  {\!}    for {\!}     every $u\in W^{1,p}_\mu$.  This
identity is  {\!}    easily verified in $C_0^\infty (\re^N)$ and extends  {\!}    to
$W^{1,p}_\mu$ by density.
\end{proof}

The compactness  {\!}    of $\Tt$ for {\!}     $p=2$ easily follows  {\!}    from the above lemma {\!} and
the compactness  {\!}    of the imbedding of $W^{1,2}_\mu$ into {\!} $L^2_\mu$, see
\cite{DPZ}. If $1<p<\infty$, the same holds  {\!}    by interpolation
(see \cite[Lemma {\!} 2]{ChoGol}).

If $Q$ is  {\!}    non degenerate, the analyticity of $\Tt$ in $L^2_\mu$
was  {\!}    proved in \cite {Fuh} (see also {\!} \cite{DPL,Lunardi}).
>From the Stein interpolation theorem it follows  {\!}    that $\Tt$
is  {\!}    analytic in $L^p_\mu$ for {\!}     $1<p<\infty$.  On the other {\!}     hand,
$\Tt$ is  {\!}    not analytic in $L^2_\mu$ (hence in $L^p_\mu$)
if $Q$ is  {\!}    degenerate, see \cite{Goldys}.  We show that in any
case $\Tt$ is  {\!}    differentiable in $L^p_\mu$, if $1<p<\infty $.
To {\!} prove this  {\!}    we need the following lemma {\!} which generalises
\cite[Lemma {\!} 2.1]{Lunardi}.

\begin{lemma}                \label{alessandra}
If $1<p<\infty$, for {\!}     every $h=1,\ldots,N$ the map $u \mapsto {\!} x_hu$
is  {\!}    bounded from $W^{1,p}_\mu$ to {\!} $L^p_\mu$.
\end{lemma}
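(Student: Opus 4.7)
My plan is to establish the bound $\|x_h u\|_{L^p_\mu}^p\le C\bigl(\|u\|_p^p+\|\nabla u\|_p^p\bigr)$ for $u\in C_0^\infty(\re^N)$ and then extend to $W^{1,p}_\mu$ using the density of $C_0^\infty(\re^N)$ noted at the start of Section 2.

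The main tool is Gaussian integration by parts. Since $b(x)=c\exp(-\langle Q_\infty^{-1}x,x\rangle/4)$, differentiation gives $\partial_j b=-\tfrac{1}{2}(Q_\infty^{-1}x)_j b$, which rearranges to the pointwise identity
\begin{equation*}
x_h\,b(x)=-2\sum_{j=1}^N (Q_\infty)_{hj}\,\partial_j b(x).
\end{equation*}
Testing against the function $|x_h|^{p-1}\operatorname{sgn}(x_h)\,|u|^p$ and integrating by parts in each $x_j$ yields the basic identity
\begin{equation*}
\int|x_h u|^p\,d\mu = 2(p-1)(Q_\infty)_{hh}\!\int|x_h|^{p-2}|u|^p\,d\mu + 2p\sum_{j}(Q_\infty)_{hj}\!\int|x_h|^{p-1}\operatorname{sgn}(x_h)|u|^{p-1}\operatorname{sgn}(u)\,\partial_j u\,d\mu.
\end{equation*}

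The cross term is bounded, by H\"older (exponents $p/(p-1)$ and $p$) followed by Young, by $\varepsilon\int|x_h u|^p d\mu+C_\varepsilon\|\nabla u\|_p^p$, and the $\varepsilon$-piece is absorbed on the left. For $p\ge 2$ the remaining first-order term is handled by the elementary Young inequality $|x_h|^{p-2}|u|^p\le\varepsilon|x_h u|^p+C_\varepsilon|u|^p$ (here $p-2\ge 0$), and a second absorption closes the estimate.

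The main obstacle is the range $1<p<2$, where $p-2$ is negative, so $|x_h|^{p-2}$ is singular at $x_h=0$ and the formal IBP above is not classically justified. To handle this, I would mollify the test function by replacing $|t|^{p-1}\operatorname{sgn}(t)$ with $\phi_\varepsilon(t)=t(t^2+\varepsilon)^{(p-2)/2}$; this function is smooth, satisfies $|\phi_\varepsilon(t)|\le|t|^{p-1}$, and its derivative $\phi_\varepsilon'(t)=(t^2+\varepsilon)^{(p-4)/2}[(p-1)t^2+\varepsilon]\le p\,(t^2+\varepsilon)^{(p-2)/2}$ is bounded for each fixed $\varepsilon>0$. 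The corresponding IBP identity is then legitimate and has the regularised weight $(x_h^2+\varepsilon)^{(p-2)/2}$ in place of the singular factor. To bound that weighted integral uniformly in $\varepsilon$ by $\|u\|_p^p+\|\nabla u\|_p^p$, I would split $\{|x_h|\ge 1\}$ (where the weight is at most $1$) from $\{|x_h|\le 1\}$ and treat the latter by Fubini in the $x_h$-direction, reducing to a one-dimensional weighted $L^p$ estimate that uses the smooth bounded density of $\mu$ in $x_h$ and the fundamental theorem of calculus to absorb $\|\partial_h u\|_p$. Monotone convergence as $\varepsilon\downarrow 0$ then yields the claim for $u\in C_0^\infty(\re^N)$, and density gives the full statement on $W^{1,p}_\mu$.
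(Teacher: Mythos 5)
Your scheme is the paper's own, almost step for step: Gaussian integration by parts against $|x_h|^{p-1}\mathrm{sgn}(x_h)|u|^p$, absorption of the gradient term via H\"older and Young, the bound $|x_h|^{p-2}\le\varepsilon|x_h|^p+C_\varepsilon$ when $p\ge2$, and, for $1<p<2$, the splitting of $\{|x_h|\le1\}$ from $\{|x_h|\ge1\}$ followed by a one-dimensional Fubini plus FTC/Sobolev argument for the singular term $\int|x_h|^{p-2}|u|^p\,d\mu$. Your mollification $\phi_\varepsilon(t)=t(t^2+\varepsilon)^{(p-2)/2}$ is a harmless rigor upgrade: the paper integrates by parts directly, which is also legitimate because $t\mapsto|t|^{p-1}\mathrm{sgn}(t)$ is absolutely continuous with derivative $(p-1)|t|^{p-2}\in L^1_{\mathrm{loc}}$ (recall $p>1$), and for $u\in C_0^\infty(\R^N)$ every integral in sight is finite, so all your absorption steps are sound.

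The one place you deviate from the paper is the source of a genuine gap. You keep $Q_\infty$ general, whereas the paper's very first step is a linear change of variables making $Q_\infty$ diagonal, so that the density factorizes, $b(x)=b'(x')g(x_h)$, with $g$ a fixed one-dimensional Gaussian which on $[-1,1]$ is bounded above and below by positive constants; that comparability is what makes the Fubini reduction close. In your setting the conditional density of $x_h$ given $x'$ is a Gaussian whose center is an unbounded linear function of $x'$: writing $b(x',t)=c\exp\big(-(at^2+2\ell(x')t+m(x'))/4\big)$ with $a=(Q_\infty^{-1})_{hh}$ and $\ell(x')=\sum_{j\ne h}(Q_\infty^{-1})_{hj}x_j$, one has $\sup_{|t|\le1}b(x',t)\,/\,\inf_{|t|\le1}b(x',t)\ge e^{|\ell(x')|}$, which is unbounded in $x'$ unless $(Q_\infty^{-1})_{hj}=0$ for all $j\ne h$. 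Your reduction needs exactly this ratio: after estimating $\sup_{|t|\le1}|u(x',t)|^p\le C\int_{-1}^1(|u|^p+|\partial_hu|^p)\,dt$ and $\int_{-1}^1|t|^{p-2}b(x',t)\,dt\le\tfrac{2}{p-1}\sup_{|t|\le1}b(x',t)$, you must convert the unweighted $dt$-integral back into one against $b(x',t)\,dt$ in order to land in $\|u\|_p^p+\|\nabla u\|_p^p$, and the constant you pay is precisely that sup/inf ratio; ``smooth bounded density in $x_h$'' is not enough, so as written the case $1<p<2$ does not close. The fix is exactly the paper's opening move: first perform the linear change of variables diagonalizing $Q_\infty$ (the statement of the lemma is invariant under such a change, since each new coordinate is a linear combination of the old ones and the spaces $W^{1,p}_\mu$, $L^p_\mu$ transform equivalently), and then run your argument; as a bonus your integration-by-parts identity simplifies, the sum over $j$ collapsing to the single term $j=h$.
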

\begin{proof}It suffices  {\!}    to {\!} show that there is  {\!}    a {\!} constant $K_p$ such that
for {\!}     every $u\in C_0^\infty(\re^N)$
\begin{equation}                  \label{ceq}
\int_{\re^N}|x_hu(x)|^p \,d\mu(x) \leq
K_p \int_{\re^N}\bigl(|u(x)|^p+|D u(x)|^p\bigr)\, d\mu (x).
\end{equation}
By a {\!} linear {\!}     change of variables  {\!}    we may assume that $Q_\infty$ is  {\!}    diagonal
with eigenvalues  {\!}    $\mu_1,\dots,\mu_N$ and hence that
$$
b(x)=\frac{1}{(4\pi)^{N/2}(\mu_1 \cdots  {\!}    \mu_N)^{1/2}}
\exp\Bigl\{-\sum_{i=1}^Nx_i^2/(4\mu_i)\Bigr\}.
$$
First case, assume $p\geq 2$.  If $u\in C_0^\infty(\re^N)$, then
one has  {\!}    with $C=2\max \{\mu_1, \dots,\mu_N\}$
\begin{eqnarray*}
&&\!\!\int_{\re^N}|x_hu(x)|^p\,d\mu(x)\leq -C\int_{\re^N}|u(x)|^p
|x_h|^{p-2}x_h\cdot D_hb(x)\, dx
\\
&=\!\!&C\int_{\re^N} \bigl (px_hu(x)|x_hu(x)|^{p-2}D_h u(x) +
(p-1)|x_h|^{p-2}|u(x)|^p \bigr {\!}     )\, d\mu(x)
\\
&\leq\!\!& C_1 \int_{\re^N}|x_h|^{p-2}|u(x)|^p\, d\mu(x)
\\
&&+C_2 \Bigl(\int_{\re^N}|x_hu(x)|^p\, d\mu(x)\Bigr)^{\frac{p-1}{p}}
\Bigl(\int_{\re^N}|D_hu(x)|^p\, d\mu(x) \Bigr {\!}     )^{\frac{1}{p}}
\\
&\leq\!\!& \eps  {\!}    \int_{\re^N}|x_hu(x)|^p\, d\mu(x)
+C_\eps\int_{\re^N}(|u(x)|^p+|D_hu(x)|^p)\, d\mu(x),
\end{eqnarray*}
for {\!}     every $\eps  {\!}    >0$, with a {\!} suitable $C_\eps$ (in the last line we have
used Young's  {\!}    inequality and the estimate $|x_h|^{p-2}\leq C_\eps  {\!}    +\eps
|x_h|^p$).  Choosing $\eps  {\!}    <1$ we deduce (\ref{ceq}).

Let us  {\!}    deal with the case $1<p<2$. We proceed as  {\!}    before but
we have to {\!} estimate in a {\!} different way the term
$$
\int_{\re^N}|x_h|^{p-2} |u(x)|^p\, d\mu(x).
$$
To {\!} simplify the notation, take $h=N$ and write $x'=(x_1,\ldots,x_{N-1})$,
$b(x)=b'(x')\frac{e^{-x_N^2/ 4 \mu_N}}{(4\pi\mu_N)^{1/2}} $,
$d\mu'=b'(x')dx'$,
$d\mu''=(4\pi\mu_N)^{-1/2}\exp\{-x_N^2/4\mu_N\} dx_N$, so {\!} that
\begin{eqnarray*}
&&\int_{\re^N}|x_N|^{p-2} |u(x)|^p\, d\mu(x)
\\
&=&\int_{\re^{N-1}}d\mu'(x')\int_\re |x_N|^{p-2}|u(x',x_N)|^p\,d\mu''(x_N)
\\
&=&\int_{\re^{N-1}}d\mu'(x')\int_{|x_N|\geq
1}|x_N|^{p-2}|u(x',x_N)|^p\,d\mu''(x_N)
\\
&&+\int_{\re^{N-1}}d\mu'(x')
\int_{-1}^1|x_N|^{p-2}|u(x',x_N)|^p\,d\mu''(x_N)
\\
&:=& J_1 + J_2
\end{eqnarray*}
Clearly, $J_1\leq\int_{\re^N}|u(x)|^p\, d\mu(x)$. Let us  {\!}    estimate
$J_2$. For {\!}     every $x'\in\re^{N-1}$ we have, by the Sobolev embedding
$W^{1,p}(-1,1)\hookrightarrow L^\infty(-1,1)$,
\begin{eqnarray*}
&&\int_{-1}^1|x_N|^{p-2}|u(x',x_N)|^p\,d\mu''(x_N)
\\
&\leq&
C \Bigl(\sup_{|x_N|\leq 1}|u(x',x_N)\Bigr)^p
\int_{-1}^1|x_N|^{p-2}\,dx_N
\\
&\leq& C_1\int_{-1}^1\bigl(|u(x',x_N)|^p+|D_Nu(x',x_N)|^p\bigr)\,dx_N
\\
&\leq& C_2\int_{\re}\bigl(|u(x',x_N)|^p+|D_Nu(x',x_N)|^p\bigr)\,d\mu''(x_N)
\end{eqnarray*}
whence, integrating on $\re^{N-1}$,
$$
J_2 \leq C_2 \int_{\re^N} (|u(x)|^p + |Du(x)|^p)\, d\mu(x),
$$
and this  {\!}    completes  {\!}    the proof.
\end{proof}

It follows, in particular, that the map $Lu=\langle Bx, Du \rangle$ is
bounded from $W^{2,p}_\mu$ into {\!} $L^p_\mu$ for {\!}     $1<p<\infty$.

\begin{proposition}                   \label{differentiability}
For {\!}     $1<p<\infty$ the semigroup $\Tt$ is  {\!}    differentiable in $L^p_\mu$.
\end{proposition}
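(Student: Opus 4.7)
The plan is to verify the standard characterization of a differentiable $C_0$-semigroup: for every $t>0$, $T(t)L^p_\mu\subset D_p$ and the operator $A_pT(t)$ is bounded on $L^p_\mu$. Once this is established, for each $f\in L^p_\mu$ the map $s\mapsto T(s)f$ is right-differentiable at every $t>0$ with derivative $A_pT(t)f$, and a standard semigroup argument upgrades this to differentiability. So the whole task reduces to controlling $\|AT(t)f\|_p$ by $\|f\|_p$ for every fixed $t>0$.

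First, I would apply Lemma \ref{regularity}: for $1<p<\infty$ and every $f\in L^p_\mu$, one has $T(t)f\in C^\infty(\re^N)\cap W^{k,p}_\mu$ for every $k$, and for $t\in(0,1)$ a quantitative bound
\[
\|D^\alpha T(t)f\|_p \le \frac{C(k,p)}{t^{|\alpha|(1/2+m)}}\|f\|_p,\qquad |\alpha|=k.
\]
In particular, for $k=2$ the second order derivatives are in $L^p_\mu$, so the diffusive part satisfies $\|\mathrm{Tr}(QD^2T(t)f)\|_p\le C\,t^{-2(1/2+m)}\|f\|_p$ for small $t$ (and a locally bounded function of $t$ in general, by the semigroup property combined with the case $t\in(0,1)$).

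Next, I would handle the unbounded drift $\langle Bx,DT(t)f\rangle$ by combining Lemma \ref{regularity} with Lemma \ref{alessandra}. Setting $u=T(t)f$, each first partial $D_h u$ lies in $W^{1,p}_\mu$ (since $u\in W^{2,p}_\mu$), and Lemma \ref{alessandra} then gives $x_h D_h u\in L^p_\mu$ together with the bound
\[
\|x_h D_hT(t)f\|_p\le K_p\bigl(\|DT(t)f\|_p+\|D^2T(t)f\|_p\bigr)\le C'(t)\,\|f\|_p.
\]
Summing over $h$ and multiplying by the matrix $B$, one obtains $\|\langle Bx,DT(t)f\rangle\|_p\le C''(t)\,\|f\|_p$.

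Now $u=T(t)f$ is a $C^\infty$ function with $D_{ij}u\in L^p_\mu$ and $|x||Du|\in L^p_\mu$, so Lemma \ref{dominio} applies and yields $T(t)f\in D_p$ with $A_p T(t)f = AT(t)f$. Combining the two estimates above,
\[
\|A_pT(t)f\|_p \le C(t)\,\|f\|_p, \qquad t>0,
\]
with $C(t)$ locally bounded on $(0,\infty)$, which is precisely the condition $A_pT(t)\in\mathcal{L}(L^p_\mu)$ for every $t>0$, i.e.\ differentiability of $\Tt$. There is no serious obstacle: the two preparatory lemmas do exactly the work needed. The only delicate point to keep in mind is that, in contrast, these estimates fail when $p=1$ (Lemma \ref{alessandra} requires $p>1$), which is why the proposition excludes that case — consistent with the announced non-differentiability in $L^1_\mu$.
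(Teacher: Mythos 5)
Your proof is correct and essentially the paper's own: both hinge on combining the derivative estimates of Lemma \ref{regularity} with the weighted estimate of Lemma \ref{alessandra} to obtain $\|A_pT(t)f\|_{p}\le C(t)\|f\|_{p}$, the only cosmetic difference being that you check $T(t)f\in D_p$ for every $f\in L^p_\mu$ directly via Lemma \ref{dominio}, whereas the paper first argues for $f$ in the Schwartz class (where $T(t)f\in D_p$ is immediate) and then extends $A_pT(t)$ by density, using closedness of $A_p$. One small precision: for the drift $\langle Bx,DT(t)f\rangle$ you need all products $x_jD_iT(t)f$, not only the diagonal ones $x_hD_hT(t)f$; applying Lemma \ref{alessandra} to each $D_iT(t)f$ gives exactly these with the same bound, so nothing in your argument changes.
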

\begin{proof} If $f \in {\cal
S}(\re^N)$ then $T(t)f \in {\cal S}(\re^N) \subset D_p$.  From Lemmas  {\!}    \ref
{alessandra}, \ref{regularity} it follows  {\!}    as  {\!}    in \cite[Proposition
3.3]{Lunardi} that
$$
\|A_pT(t)f\|_p=\|AT(t)f\|_p \leq \frac{C}{t^{2m+1}}\|f\|_p, \quad 0<t\leq
1,
$$
hence $A_pT(t)$ extends  {\!}    to {\!} a {\!} bounded operator {\!}     in $L^p_\mu$ and the thesis
follows.
\end{proof}

We shall see that the above result is  {\!}    false for {\!}     $p=1$, see Section 4.

\section{Spectrum in $L^p_\mu$ for {\!}     $1<p<\infty$}

In this  {\!}    section we assume that $1< p<\infty$.  The following estimate is
the main step to {\!} show that the eigenfunctions  {\!}    of $A_p$ are polynomials.

\begin{lemma}                       \label{stima}
Let $k\in \nat$ and $\eps  {\!}    >0$ be given, with $s(B)+\eps<0$. Then there
exists  {\!}    $C=C(k,\eps)$ such that for {\!}     every $u\in W^{k,p}_\mu$
\begin{equation}                           \label{eq2}
\sum_{|\alpha|=k}\|D^\alpha {\!} T(t)u\|_p \leq C
e^{tk(s(B)+\eps)}\sum_{|\alpha|=k}\|D^\alpha {\!} u\|_p,\;\;\; t \ge 0.
\end{equation}
\end{lemma}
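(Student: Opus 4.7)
My plan is to exploit the commutation relation between differentiation and the Ornstein-Uhlenbeck semigroup. The starting point is the identity
\[
D T(t) u = e^{tB^*} T(t) Du, \quad u \in W^{1,p}_\mu,
\]
already recorded in the proof of Lemma \ref{regularity}.

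First I would iterate this identity to obtain the analogous commutation relation for higher-order derivatives. Writing the $k$-th derivative of $u$ as a tensor with components $D_{i_1}\cdots D_{i_k} u$, the iteration gives
\[
D_{i_1}\cdots D_{i_k} T(t) u = \sum_{j_1,\ldots,j_k} (e^{tB^*})_{i_1 j_1}\cdots (e^{tB^*})_{i_k j_k}\, T(t) D_{j_1}\cdots D_{j_k} u.
\]
I would prove this by induction on $k$, starting from the one-derivative identity and using Lemma \ref{regularity} to ensure that $T(t) D^\beta u$ lies in $W^{1,p}_\mu$ for $|\beta| \le k-1$, so that the next derivative may be applied. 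The base identity on $W^{1,p}_\mu$ is itself obtained from the corresponding identity on $C_0^\infty(\mathbb{R}^N)$, where it is immediate by differentiating under the integral sign in the Kolmogorov representation \eqref{OUS}, by the density of $C_0^\infty(\mathbb{R}^N)$ in $W^{1,p}_\mu$ noted at the start of Section 2.

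Next I would take $L^p_\mu$-norms on both sides. Since the semigroup is a positive contraction on $L^p_\mu$, one has $\|T(t) D^\beta u\|_p \le \|D^\beta u\|_p$ for every multi-index $\beta$, and the tensor coefficient may be estimated, after identifying the space of $k$-tensors with $\mathbb{R}^{N^k}$ and using equivalence of norms in finite dimensions, by $C_1 \|e^{tB^*}\|^k$ for some $C_1 = C_1(k, N)$. Summing over $|\alpha|=k$ and reindexing yields
\[
\sum_{|\alpha|=k} \|D^\alpha T(t) u\|_p \le C_1 \|e^{tB^*}\|^k \sum_{|\beta|=k} \|D^\beta u\|_p.
\]

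Finally, since $s(B^*) = s(B) < 0$ and $s(B) + \eps < 0$ by assumption, the standard spectral estimate for the matrix exponential in finite dimensions provides $C_2 = C_2(\eps)$ with $\|e^{tB^*}\| \le C_2 e^{t(s(B) + \eps)}$ for every $t \ge 0$. Substituting this into the previous inequality yields \eqref{eq2} with $C = C_1 C_2^k$. I do not foresee a serious obstacle; the only point requiring care is justifying the iterated commutation relation on $W^{k,p}_\mu$ rather than merely on smooth test functions, which is handled by density together with the smoothing property of $T(t)$ from Lemma \ref{regularity}.
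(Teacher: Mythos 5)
Your proposal is correct and follows essentially the same route as the paper: both rest on the commutation identity $DT(t)u=e^{tB^*}T(t)Du$ on $W^{1,p}_\mu$, the contractivity of $T(t)$ in $L^p_\mu$, and the spectral bound $\|e^{tB^*}\|\leq C(\eps)e^{t(s(B)+\eps)}$, iterated by induction on $k$. The only cosmetic difference is that you write out the iterated componentwise identity before taking norms, whereas the paper performs the induction directly at the level of the norm estimate.
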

\begin{proof} Let $C_1=C_1(\eps)$ be such that $\|e^{tB^*}\|\leq C_1
e^{t(s(B)+\eps)}$ and recall that $D T(t)u=e^{tB^*}T(t)D u$ for {\!}     every $u\in
W^{1,p}_\mu$.  Since $\Tt$ is  {\!}    contractive in $L^p_\mu$ the statement is
proved for {\!}     $k=1$ with $C=C_1$.  Suppose that the statement is  {\!}    true for {\!}     $k$
with a {\!} suitable constant $C_k$ and consider {\!}     $u\in W^{k+1,p}_\mu$.  Then, if
$|\alpha|=k$,
\begin{eqnarray*}
\|D D^\alpha {\!} T(t)u\|_p&=&\|D^\alpha {\!} D T(t)u\|_p=\|D^\alpha {\!} e^{tB^*}T(t)D
u\|_p
\\
&\leq&  C_1e^{t(s(B)+\eps)}\|D^\alpha {\!} T(t)D u\|_p
\\
&\leq& C_1C_k e^{t(k+1)(s(B)+\eps)}\|D D^\alpha {\!} u\|_p.
\end{eqnarray*}
\end{proof}

Observe that $\sigma(A_p) \subset \{\lambda {\!} \in \cc:  {\rm Re}\lambda {\!} \leq
0 \}$, since $\Tt$ is  {\!}    a {\!} semigroup of contractions  {\!}    in $L^p_\mu$ and that
$0\in\sigma(A)$.  Moreover, every eigenfunction corresponding to {\!} the
eigenvalue $0$ is  {\!}    constant (this  {\!}    holds  {\!}    also {\!} for {\!}     $p=1$).
In fact, if $u\in D_p$ and $A_pu=0$, then
$T(t)u=u$.  On the other {\!}     hand (see \cite[Theorem 4.2.1]{DPZ})
$$
T(t)u \to {\!} \int_{\re^N}u\, d\mu
$$
as  {\!}    $t\to {\!} \infty$ and therefore $u$ is  {\!}    constant.  We now show that all the
eigenfunctions  {\!}    are polynomials.

\begin{proposition}                           \label{polinomio}
Suppose that $u\in D_p$ satisfies  {\!}    $A_pu=\lambda {\!} u$. Then $u$ is  {\!}    a
polynomial.
\end{proposition}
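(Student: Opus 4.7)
The plan is to use the semigroup characterization of eigenfunctions together with the decay estimate of Lemma~\ref{stima} to force all sufficiently high derivatives to vanish.

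First I would observe that $A_p u = \lambda u$ is equivalent to $T(t)u = e^{\lambda t} u$ for every $t \ge 0$. By Lemma~\ref{regularity}, $T(t)$ maps $L^p_\mu$ into $C^\infty(\mathbb{R}^N) \cap W^{k,p}_\mu$ for every $k \in \mathbb{N}$; hence, writing $u = e^{-\lambda t}T(t)u$ for any fixed $t>0$, we conclude that $u \in C^\infty(\mathbb{R}^N) \cap W^{k,p}_\mu$ for every $k$, so that Lemma~\ref{stima} is applicable to $u$ in any Sobolev order.

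The key step is a comparison of two expressions for $\|D^\alpha T(t)u\|_p$. On the one hand, the eigenvalue relation gives $D^\alpha T(t)u = e^{\lambda t} D^\alpha u$, hence $\|D^\alpha T(t)u\|_p = e^{t\,\mathrm{Re}\lambda}\,\|D^\alpha u\|_p$. On the other hand, fixing any $\varepsilon > 0$ with $s(B) + \varepsilon < 0$ (possible since $\sigma(B) \subset \mathbb{C}^-$), Lemma~\ref{stima} yields, for every $k \in \mathbb{N}$,
\[
e^{t\,\mathrm{Re}\lambda} \sum_{|\alpha|=k} \|D^\alpha u\|_p \;=\; \sum_{|\alpha|=k} \|D^\alpha T(t)u\|_p \;\le\; C\, e^{tk(s(B)+\varepsilon)} \sum_{|\alpha|=k} \|D^\alpha u\|_p .
\]
If $\sum_{|\alpha|=k} \|D^\alpha u\|_p$ were strictly positive, dividing through and letting $t \to \infty$ would force $\mathrm{Re}\,\lambda \le k(s(B)+\varepsilon)$. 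Since $s(B)+\varepsilon < 0$, this inequality fails for every $k$ sufficiently large (depending only on $\lambda$ and $\varepsilon$). Consequently, for such $k$ we must have $\sum_{|\alpha|=k} \|D^\alpha u\|_p = 0$, which together with $u \in C^\infty$ gives $D^\alpha u \equiv 0$ for all multi-indices of length $k$. This is exactly the statement that $u$ is a polynomial of degree at most $k-1$.

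I do not foresee a serious obstacle: the smoothing estimate of Lemma~\ref{regularity} supplies the regularity of $u$, the identity $DT(t)=e^{tB^*}T(t)D$ (iterated, as in the proof of Lemma~\ref{stima}) supplies the exponential decay rate $k(s(B)+\varepsilon)$ of high derivatives along the semigroup, and the eigenvalue relation pins down the opposite growth $e^{t\,\mathrm{Re}\lambda}$. The only mild point requiring care is that the inequality must be applied for arbitrarily large $t$ and a single $k$ chosen after $\lambda$, but since $\lambda$ is fixed and $s(B)+\varepsilon$ is a strictly negative constant, any $k > \mathrm{Re}\,\lambda /(s(B)+\varepsilon)$ works.
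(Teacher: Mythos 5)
Your proof is correct and follows essentially the same route as the paper: derive $T(t)u=e^{\lambda t}u$, use Lemma~\ref{regularity} to get $u\in C^\infty(\mathbb{R}^N)\cap W^{k,p}_\mu$ for every $k$, then compare $e^{t\,\mathrm{Re}\,\lambda}\sum_{|\alpha|=k}\|D^\alpha u\|_p$ with the decay bound of Lemma~\ref{stima} and let $t\to\infty$ to conclude $D^\alpha u=0$ for $|\alpha|$ large, hence $u$ is a polynomial (of degree at most $\mathrm{Re}\,\lambda/s(B)$, as in the paper).
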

\begin{proof}Since $T(t)u=e^{\lambda {\!} t}u$, from Lemma {\!} \ref{regularity}
we deduce that $u\in W^{k,p}_\mu \cap { C}^{\infty}(\re^N)$,
for {\!}     every $k$.  Clearly $D^\alpha {\!} T(t)u=e^{\lambda {\!} t}D^\alpha {\!} u$
for {\!}     every multiindex $\alpha$.  Given $\eps\in (0,|s(B)|)$,
from Lemma {\!} \ref{stima} it follows  {\!}    that
$$
e^{t\,{\rm Re}\lambda {\!} }\sum_{|\alpha|=k}\|D^\alpha {\!} u\|_p\leq C(k,\eps)
e^{tk(s(B)+\eps)}\sum_{|\alpha|=k}\|D^\alpha {\!} u\|_p
$$
and hence $D^\alpha {\!} u=0$ if $|\alpha| |s(B)| > |{\rm Re} \lambda|$.
It follows  {\!}    that $u$ is  {\!}    a {\!} polynomial of degree less  {\!}    than or {\!}     equal to
$\frac{ {\rm Re}  (\lambda)}{s(B)}$.  This  {\!}    concludes  {\!}    the proof.
\end{proof}

Let us  {\!}    denote by
$$
Lu=\langle Bx,Du \rangle
$$
the drift term in (\ref{OU}). We reduce the computation of the
spectrum of $A_p$ to {\!} that of $L$.

\begin{lemma}                                   \label{riduzione}
The following statements  {\!}    are equivalent.
\begin{itemize}
\item[{\rm (i)}] $\lambda {\!} \in \sigma(A_p)$.
\item[{\rm (ii)}] There exists  {\!}    a {\!} homogeneous  {\!}    polynomial $u \neq 0$ such
that
$Lu=\lambda {\!} u$.
\end{itemize}
\end{lemma}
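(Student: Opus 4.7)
The plan is to exploit two facts established earlier: for $1<p<\infty$ the resolvent of $A_p$ is compact, so $\sigma(A_p)=P\sigma(A_p)$; and by Proposition \ref{polinomio} every eigenfunction of $A_p$ is a polynomial. This reduces the problem to computing eigenvalues of $A$ acting on the finite-dimensional, $A$-invariant spaces $\mathcal{P}_n$ of polynomials of degree at most $n$.

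The key observation is the grading $\mathcal{P}_n=\bigoplus_{k=0}^n \mathcal{H}_k$ by homogeneous degree, where $\mathcal{H}_k$ denotes the homogeneous polynomials of degree $k$. The drift $L=\langle Bx,D\cdot\rangle$ preserves each $\mathcal{H}_k$, whereas $\mathrm{Tr}(QD^2)$ maps $\mathcal{H}_k$ into $\mathcal{H}_{k-2}$. Consequently, in a basis of $\mathcal{P}_n$ adapted to this grading (ordered by increasing degree), $L|_{\mathcal{P}_n}$ is block diagonal and $A|_{\mathcal{P}_n}=L|_{\mathcal{P}_n}+\mathrm{Tr}(QD^2)|_{\mathcal{P}_n}$ is block upper triangular, both with the same diagonal blocks $L|_{\mathcal{H}_k}$, $k=0,\dots,n$. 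Therefore
$$
\sigma\bigl(A|_{\mathcal{P}_n}\bigr) \;=\; \sigma\bigl(L|_{\mathcal{P}_n}\bigr) \;=\; \bigcup_{k=0}^n \sigma\bigl(L|_{\mathcal{H}_k}\bigr).
$$

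For (i)$\Rightarrow$(ii), I take $u\in D_p$ with $A_p u=\lambda u$; by Proposition \ref{polinomio}, $u$ is a polynomial of some exact degree $n$, with nonzero top-degree component $u_n\in\mathcal{H}_n$. Projecting the identity $Au=\lambda u$ onto $\mathcal{H}_n$ and noting that $\mathrm{Tr}(QD^2)u$ has degree at most $n-2$ yields $Lu_n=\lambda u_n$, which is (ii).

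For the converse (ii)$\Rightarrow$(i), from $u\in\mathcal{H}_n\setminus\{0\}$ with $Lu=\lambda u$ one reads off $\lambda\in\sigma(L|_{\mathcal{H}_n})\subseteq\sigma(A|_{\mathcal{P}_n})$, so there is a nonzero polynomial $v\in\mathcal{P}_n$ with $Av=\lambda v$. Any polynomial lies in $D_p$ by Lemma \ref{dominio} (its second derivatives are polynomials, and $|x||Dv|$ has polynomial growth, which is integrable against the Gaussian measure $\mu$); hence $A_p v=\lambda v$ and $\lambda\in\sigma(A_p)$. There is no real obstacle: once the block-triangular structure induced by the homogeneous grading is identified, everything reduces to finite-dimensional linear algebra together with the routine verification that polynomials sit in $D_p$.
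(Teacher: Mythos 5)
Your proposal is correct, and it reaches the lemma by a cleaner packaging than the paper's own argument, although the underlying ingredients are identical: compactness of the resolvent (so $\sigma(A_p)$ consists of eigenvalues), the proposition that every eigenfunction of $A_p$ is a polynomial, the lemma guaranteeing that polynomials lie in $D_p$ with $A_pu=Au$, and the invariance of $\mathcal{P}_n$ under $A$ together with the invariance of each homogeneous space $\mathcal{H}_k$ under $L$. The paper never makes the grading explicit: for (i)$\Rightarrow$(ii) it distinguishes whether $\lambda-L$ is bijective on $\mathcal{P}_{n-2}$, in the bijective case solving $\lambda v-Lv=\mathrm{Tr}(QD^2)u$ and passing to $z=u-v$, otherwise taking $z$ in the kernel, and only at the end extracts the homogeneous addends of $z$; your direct projection of $Au=\lambda u$ onto the top-degree component $\mathcal{H}_n$ avoids this dichotomy and is shorter. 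For (ii)$\Rightarrow$(i) the paper runs the symmetric construction ($w=u+v$ with $\lambda v-Av=\mathrm{Tr}(QD^2)u$, or non-injectivity of $\lambda-A_p$ on $\mathcal{P}_{n-2}$), which has the mild advantage of exhibiting an explicit eigenfunction of $A_p$ as a homogeneous $L$-eigenfunction plus a lower-order correction; your observation that $A|_{\mathcal{P}_n}$ is block-triangular with diagonal blocks $L|_{\mathcal{H}_k}$, because $\mathrm{Tr}(QD^2)$ strictly lowers the degree while $L$ preserves it, gives $\sigma(A|_{\mathcal{P}_n})=\bigcup_{k\le n}\sigma(L|_{\mathcal{H}_k})$ and hence the existence of such an eigenfunction nonconstructively, but in one line. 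Two small points to state explicitly: in the forward direction you also need $A_pu=Au$ for polynomial $u$ (the same lemma you invoke for the converse), and the finite-dimensional spaces $\mathcal{P}_n$, $\mathcal{H}_k$ should be taken with complex coefficients since $\lambda$ may be complex; the paper treats both points in the same implicit way, so neither is a genuine gap.
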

\begin{proof} First we observe that $A_pu=Au$ if $u$ is  {\!}    a {\!} polynomial (see
Lemma {\!} \ref{dominio}) and that both $A$ and $L$ map ${\cal P}_n$ into
itself.  Moreover {\!}     $A=L$ on ${\cal P}_1$ and hence we may consider {\!}     only
polynomials  {\!}    of degree greater {\!}     than or {\!}     equal to {\!} $2$.

Suppose that (i) holds  {\!}    and let $u $ be a {\!} polynomial of degree $n \ge 2$
such that $A_pu=\lambda {\!} u$, that is  {\!}    $\lambda
u-\sum_{i,j}q_{ij}D_{ij}u-Lu=0$.  If $\lambda {\!} -L$ is  {\!}    bijective on ${\cal
P}_{n-2}$ we can find $v\in {\cal P}_{n-2}$ such that $\lambda
v-Lv=\sum_{i,j}q_{ij}D_{ij}u$ and hence $z=u-v \in {\cal P}_n$, satisfies
$\lambda {\!} z-Lz=0$ and $z \neq 0$.  If $\lambda {\!} -L$ is  {\!}    not bijective on
${\cal P}_{n-2}$ we consider {\!}     a {\!} function $z$ in its  {\!}    kernel.  In any case we
find $0\neq z\in {\cal P}_n$ such that $\lambda {\!} z-Lz=0$.  To {\!} find a
(nonzero) homogeneous  {\!}    polynomial $u$ such that $\lambda {\!} u-Lu=0$ it is
sufficient to {\!} observe that $L$ maps  {\!}    homogeneous  {\!}    polynomials  {\!}    into
homogeneous  {\!}    polynomials  {\!}    so {\!} that all homogeneous  {\!}    addends  {\!}    $u$ of $z$ satisfy
$\lambda {\!} u-Lu=0$.

Assume now that (ii) holds  {\!}    with $u$ homogeneous  {\!}    polynomial of
degree $n \geq 2$.  If $\lambda-A_p$ is  {\!}    not injective on
${\cal P}_{n-2}$ clearly (i) is  {\!}    true.
Otherwise we find $v \in {\cal P}_{n-2}$ such that $\lambda
v-Av=\sum_{i,j}q_{ij}D_{ij}u$ and then $0 \neq w=u+v \in {\cal P}_n$
satisfies  {\!}    $\lambda {\!} w-A_pw=0$.
\end{proof}

We study now the equation $\gamma {\!} u-Lu=0$ with $u$ polynomial, $\gamma {\!} \in
\cc$.  If $B=-I$ this  {\!}    is  {\!}    the well-known Euler {\!}     equation satisfied by all
regular {\!}     functions  {\!}    homogeneous  {\!}    of degree $(-\gamma)$.  If we require that
$u$ is  {\!}    a {\!} polynomial, we obtain $(-\gamma)\in\nat$, hence all negative
integers  {\!}    are eigenvalues  {\!}    of $L$ and, for {\!}     every $n\in\nat$, all homogeneous
polynomials  {\!}    of degree $n$ are eigenfunctions.

The equation with a {\!} general $B$ is  {\!}    much more complicated and we shall not
characterise all polynomial solutions  {\!}    but only the values  {\!}    of $\gamma {\!} $ for
which such a {\!} solution exists.  Observe that a {\!} differentiable function $u $
satisfies  {\!}    $\gamma {\!} u-Lu=0$ if and only if
\begin{equation}                                   \label{eq}
u(e^{tB}x)=e^{t\gamma}u(x) \qquad t\geq 0,\ x\in \re^N.
\end{equation}

Let $u$ be a {\!} (nonzero) homogeneous  {\!}    polynomial of degree $n$ satisfying
(\ref{eq}): in this  {\!}    case the same equality holds  {\!}    for {\!}     every {\em complex}
point $x\in\cc^N$.  Let now $M$ be a {\!} non-singular {\!}     complex $N\times  N$
matrix, such that $MBM^{-1}=C$, where $C$ is  {\!}    the canonical Jordan form of
$B$.  Introduce a {\!} new homogeneous  {\!}    polynomial $v(z)= u(M^{-1}z)$,
$z\in\cc^N$, so {\!} that $u(x)=v(Mx)$.  Since $v(Me^{tB}M^{-1}z)=e^{t\gamma}\,
v(z)$, we obtain that
$$
v(e^{tC} z)=e^{t \gamma}\, v(z),\qquad z \in \cc^N,
$$
and we find the values  {\!}    of $\gamma$ for {\!}     which a {\!} solution exists  {\!}    working with
the Jordan matrix $C$.  Before proving the main result of this  {\!}    section, we
present in a {\!} particular {\!}     case the argument we use in the proof.  Let us
suppose that $C$ consists  {\!}    of a {\!} unique Jordan block of size $N$ relative to
an eigenvalue $\lambda$, that is
$$
C=\left(
\begin{array}{cccc}
\lambda {\!} &       1 & \cdots  {\!}    & 0        \\
0       & \lambda {\!} & \cdots  {\!}    & \vdots  {\!}      \\
\vdots  {\!}     & \vdots  {\!}     & \ddots  {\!}    & 1        \\
0       & \cdots  {\!}     & 0      & \lambda {\!}  \\
\end{array}
\right)
$$
and write $C=\lambda {\!} I+R$ with $R$ nilpotent. Hence $e^{tR}$ has
polynomial entries  {\!}    and we obtain
\begin{equation} \label{23}
e^{t\gamma}v(z)=v(e^{tB}z) =v(e^{t\lambda}e^{tR}z)
=e^{n\lambda {\!} t}v(e^{tR}z)  =e^{n\lambda {\!} t}q(t,z)
\end{equation}
where $q(t,z)=\sum_{|\alpha|=n}c_{\alpha}(t)z^{\alpha}$ and the
$c_{\alpha}(t)$ are polynomials.  Now fix $\hat{z}\neq 0$ in (\ref{23})
such that $v(\hat{z})\neq 0$ and look at the variable $t$.  It follows  {\!}    that
$\gamma=n\lambda$, i.e., the eigenvalues  {\!}    of $L$ are multiples  {\!}    of the
(unique) eigenvalue of $B$. In the general case, we have the following
result.

\begin{theorem}                    \label{spettro}
Let $\lambda_1, \dots,\lambda_r$ be the (distinct) eigenvalues  {\!}    of $B$.
Then
$$
\sigma(A_p)=\Bigl\{\lambda=\sum_{j=1}^r {\!}     n_j \lambda_j : n_j\in \nat
\Bigr\}.
$$
\end{theorem}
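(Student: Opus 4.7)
My plan is to combine Lemma \ref{riduzione}, which characterizes $\sigma(A_p)$ as the set of $\lambda$ admitting a nonzero homogeneous polynomial $u$ with $Lu = \lambda u$, with a careful analysis of $L = \langle Bx, D\cdot\rangle$ acting on the finite-dimensional spaces $\mathcal{P}_n^{\mathrm{hom}}$ of homogeneous polynomials of degree $n$. After the complex linear change of variables $v(z) = u(M^{-1}z)$ with $M B M^{-1} = C$ in Jordan form, $L$ is conjugate to the operator $\tilde L v = \langle Cz, Dv\rangle$, so I may assume $B = C = D + R$ with $D = \mathrm{diag}(\mu_1,\ldots,\mu_N)$ listing the eigenvalues of $B$ with algebraic multiplicity and $R$ nilpotent with $DR = RD$.

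For the inclusion $\sigma(A_p) \subset \{\sum n_j \lambda_j\}$, I would argue as in the one-block case already sketched before the theorem. Given a nonzero homogeneous polynomial $v$ of degree $n$ with $\tilde Lv = \gamma v$, the identity $v(e^{tC}z) = e^{t\gamma}v(z)$ extends to all $t \in \R$, $z \in \C^N$. Writing $e^{tC} = e^{tD}e^{tR}$ with $e^{tR}$ polynomial in $t$, and writing $v(z) = \sum_{|\beta|=n} v_\beta z^\beta$, one computes
\begin{equation*}
v(e^{tC}z) \;=\; \sum_{|\beta|=n} e^{t\langle\beta,\mu\rangle}\, v_\beta\, Q_\beta(t,z),
\end{equation*}
where $Q_\beta(t,z)$ is a polynomial in $t$, homogeneous of degree $n$ in $z$. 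Fixing $\hat z$ with $v(\hat z) \neq 0$ and grouping terms with equal $\langle\beta,\mu\rangle$, the linear independence of the distinct exponentials $e^{t\nu}$ in the variable $t$ forces $\gamma = \langle\beta,\mu\rangle$ for some $\beta$ with $|\beta|=n$. Since each $\mu_i$ is one of the $\lambda_j$, regrouping gives $\gamma = \sum_{j=1}^r n_j\lambda_j$ with $n_j \in \N$ and $\sum_j n_j = n$.

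For the opposite inclusion, I would show that every $\gamma = \sum_j n_j\lambda_j$ with $n_j \in \N$ (and $n := \sum n_j$) actually occurs as an eigenvalue by examining the matrix of $\tilde L$ on the finite-dimensional space $\mathcal{P}_n^{\mathrm{hom}}$. A direct computation gives
\begin{equation*}
\tilde L(z^\beta) \;=\; \langle\beta,\mu\rangle\, z^\beta \;+\; \sum_{i} \eta_i\, \beta_i\, z^{\beta - e_i + e_{i+1}},
\end{equation*}
with $\eta_i \in \{0,1\}$ the superdiagonal entries of $R$. Since every shift $\beta \mapsto \beta - e_i + e_{i+1}$ strictly lowers the lexicographic order, the matrix of $\tilde L$ in the basis of monomials ordered lex-decreasingly is lower triangular with diagonal entries $\langle\beta,\mu\rangle$. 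Therefore the spectrum of $\tilde L|_{\mathcal{P}_n^{\mathrm{hom}}}$ equals $\{\langle\beta,\mu\rangle : |\beta|=n\} = \{\sum_j n_j\lambda_j : n_j\in\N,\ \sum n_j=n\}$, and to each such value corresponds a nonzero eigenvector. Taking the union over $n\ge 0$, combined with Lemma \ref{riduzione}, yields the claimed equality.

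The main obstacle I expect is the clean bookkeeping in the expansion of $v(e^{tC}z)$ and the verification that the coefficient of $e^{t\gamma}$ can indeed be made nonzero at some point $\hat z$; this is where the homogeneity of $v$ and the commutativity of $D$ and $R$ are used to separate the $\mu$-dependent exponential factors from the polynomial $R$-contributions. The triangular argument in step~4 is largely formal once the ordering is fixed, but care is needed to check that increasing the index $i+1$ at the expense of $i$ genuinely decreases the lex order of $\beta$.
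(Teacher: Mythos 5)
Your proof is correct, and your first inclusion is essentially the paper's own argument: both pass to the Jordan form via $v(z)=u(M^{-1}z)$, expand $v(e^{tC}z)$ as a finite sum of terms $e^{t\nu}$ times polynomials in $t$, with $\nu$ ranging over the values $\sum_j n_j\lambda_j$ of total degree $n$, evaluate at a point $\hat z$ with $v(\hat z)\neq 0$, and use the linear independence in $t$ of such exponential--polynomial functions to force $\gamma$ to be one of the exponents (your commutation $DR=RD$ is exactly what the paper exploits blockwise by writing $C_j=\lambda_j I+R_j$). Where you genuinely diverge is the converse inclusion. The paper exhibits an explicit eigenfunction: for $\gamma=\sum_j n_j\lambda_j$ it takes $v(z)=z_{k_1}^{n_1}z_{k_1+k_2}^{n_2}\cdots z_{k_1+\cdots+k_r}^{n_r}$, a monomial in the last coordinate of each Jordan block, checks $v(e^{tC}z)=e^{t\gamma}v(z)$ directly (the last coordinate of each block is only rescaled by $e^{t\lambda_j}$), and pulls back by $M$; you instead compute the matrix of the drift on the finite-dimensional space of homogeneous polynomials of degree $n$. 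Your formula $\tilde L(z^\beta)=\langle\beta,\mu\rangle z^\beta+\sum_i\eta_i\beta_i z^{\beta-e_i+e_{i+1}}$ is correct, and the shift $\beta\mapsto\beta-e_i+e_{i+1}$ does strictly lower the lexicographic order (the first $i-1$ entries are unchanged and the $i$-th drops by one), so the matrix is triangular with diagonal entries $\langle\beta,\mu\rangle$, $|\beta|=n$; hence each such value admits a nonzero homogeneous eigenpolynomial, and the reduction lemma turns this into membership in $\sigma(A_p)$. The two routes buy different things: the paper's construction produces concrete eigenfunctions with no matrix computation, while your triangularity argument is more systematic and in fact makes your flow argument redundant, since the spectrum of a triangular matrix is exactly its diagonal, so the same computation gives $\sigma(\tilde L|_{\mathcal{P}^{\mathrm{hom}}_n})=\{\sum_j n_j\lambda_j:\ n_j\in\mathbb{N},\ \sum_j n_j=n\}$ and hence both inclusions at once; you could therefore drop the exponential expansion entirely.
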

\begin{proof}We keep the above notation (recall that $M$ is  {\!}    a {\!} non-singular
complex $N\times  N$ matrix, such that $MBM^{-1}=C$ and $C$ is  {\!}    the canonical
Jordan form of $B$). Let $C_j$, for {\!}     $j=1,\ldots  {\!}    r$, be the
Jordan block of $C$ associated with $\lambda_j$ and denote by
$k_j$ ($1\leq k_j \leq N$, $\sum_{j=1}^r {\!}     k_j=N$) the size of $C_j$.
We may write $C_j=\lambda_jI+R_j$ where  $R_j$ is  {\!}    a {\!} nilpotent matrix.
Let us  {\!}    decompose $\cc^N$ into {\!} the direct sum of the invariant
subspaces  {\!}    corresponding to {\!} the Jordan blocks  {\!}    of $C$ and write
$z\in\cc^N$ in the form $z=(z_1,\ldots,z_r)$, with $z_j\in \cc^{k_j}$.

Assume that $\gamma\in\sigma(A_p)$.  Then, according to {\!} Lemma
\ref{riduzione}, there exists  {\!}    a {\!} nonzero {\!} homogeneous  {\!}    polynomial $u$ such
that $Lu=\gamma {\!} u$ or, in an equivalent way, $u(e^{tB}x)= e^{\gamma
t}u(x)$.  Introducing the homogeneous  {\!}    polynomial $v(z)= u(M^{-1}z)$, we
know that $v(e^{tC}z)=e^{t\gamma}v(z)$ for {\!}     every $z\in\cc^N$.  Let us  {\!}    write
$v$ in the following way:
$$
v(z)=\sum_{|\alpha_1|+\ldots+|\alpha_r|=n}
c_{\alpha_1,\ldots,\alpha_r}\ \prod_{j=1}^rz_j^{\alpha_j},
$$
and prove that $\gamma=\sum_j\lambda_j|\alpha_j|$, for {\!}     suitable
$(\alpha_j)$.  We have
\begin{eqnarray*}
e^{t\gamma}v(z)&=&v(e^{tC}z)=v(e^{tC_1}z_1,\ldots,e^{tC_r}z_r)
\\
&=&\sum_{|\alpha_1|+\ldots+|\alpha_r|=n}
c_{\alpha_1,\ldots,\alpha_r}\ \prod_{j=1}^r(e^{tC_j}z_j)^{\alpha_j}
\\
&=&
\sum_{|\alpha_1|+\ldots+|\alpha_r|=n}
c_{\alpha_1,\ldots,\alpha_r}\
e^{t(\lambda_1|\alpha_1|+\ldots+\lambda_r|\alpha_r|)}
\ \prod_{j=1}^r(e^{tR_j}z_j)^{\alpha_j}.
\end{eqnarray*}
Now fix $\hat{z}\neq 0$ such that $v(\hat{z})\neq 0$ and look at the
variable $t$.  Since $\prod_{j=1}^r(e^{tR_j}\hat{z}_j)^{\alpha_j}$ is  {\!}    a
polynomial in $t$ for {\!}     any $(\alpha_1,\ldots,\alpha_r)$, it follows  {\!}    that
there exists  {\!}    some $(\alpha_1,\ldots,\alpha_r)$ such that
$\gamma=\lambda_1|\alpha_1|+\ldots+\lambda_r|\alpha_r|$.  This  {\!}    means  {\!}    that
\begin{equation} \label{ci}
\gamma {\!} =\sum_{j=1}^r {\!}     n_j \lambda_j,  \;\;\; n_j\in \nat.
\end{equation}
Conversely, let $\gamma {\!} =\sum_{j=1}^r {\!}     n_j \lambda_j$, with arbitrary
$n_j\in \nat$. Let us  {\!}    write $z\in\cc^N$ in the form
$$
z=(z_1,\ldots,z_r)=
(z_1,\ldots,z_{k_1},z_{k_1 +1},\ldots,z_{k_1+k_2},\ldots,
z_{k_1+\ldots+k_r}).
$$
Consider {\!}     the polynomial
$$
v(z)=z_{k_1}^{n_1} \cdot z_{k_1 + k_2}^{n_2} \cdots  {\!}    z_{k_1+ \ldots
k_r}^{n_r},
$$
depending only upon the $r$ complex variables
$z_{k_1},z_{k_1+k_2},\ldots,z_{k_1+\ldots  {\!}    k_r}$ (the last variable in 
each block). It is  {\!}    easy to {\!} verify that
$$
v(e^{tC}z)=e^{t\gamma}v(e^{tR_1}z_1,\ldots,e^{tR_r}z_r)=e^{t\gamma}v(z),
\quad z\in\cc^N.
$$
The polynomial $u(z)= v(Mz)$, $z \in \cc^N$, satisfies
$u(e^{tB}x)= e^{t\gamma}u(x)$, $x \in \re^N $. It follows  {\!}    that
$Lu=\gamma {\!} u$ and hence $\gamma\in\sigma_p(A)$, by Lemma {\!} \ref{riduzione}.
\end{proof}

\section{Spectrum in $L^1_\mu$}

We show that the spectrum of $A_1$ is  {\!}    the left half-plane.  In particular
$\Tt$ is  {\!}    not norm-continuous  {\!}    in $L^1_\mu$, hence not analytic, nor
differentiable, nor {\!}     compact (see \cite[Ch. II, Sec. 4]{EN}).

\begin{theorem}          \label{specL1}
The spectrum of $(A_1,D_1)$ is  {\!}    the left half-plane $\{\lambda {\!} \in \cc:
{\rm Re}\ \lambda {\!} \leq 0\}$.  Each complex number {\!}     $\lambda$ with ${\rm Re}\
\lambda {\!} <0$ is  {\!}    an eigenvalue.
\end{theorem}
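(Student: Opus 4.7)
The plan is to address the two halves separately. First, the inclusion $\sigma(A_1)\subset\{\mathrm{Re}\,\lambda\le 0\}$ is immediate because $T(t)$ is a $C_0$-semigroup of contractions on $L^1_\mu$. Since $\sigma(A_1)$ is closed and already contains $0$ (the constant function $\mathbf{1}$ being an eigenfunction), it will suffice to show that every $\lambda$ with $\mathrm{Re}\,\lambda<0$ lies in the point spectrum; by closure, the full closed left half-plane then follows.

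To produce such eigenfunctions, I would reduce the problem to a one-dimensional Hermite-type ODE. Pick an eigenvector $v\in\mathbb{C}^N\setminus\{0\}$ of $B^*$ with eigenvalue $\eta$ (so $\mathrm{Re}\,\eta<0$) and search for eigenfunctions of the form $u(x)=\phi(\langle v,x\rangle)$. A direct computation using $B^*v=\eta v$ gives
\begin{equation*}
Au(x)\;=\;\langle Qv,v\rangle\,\phi''(\langle v,x\rangle)\;+\;\eta\,\langle v,x\rangle\,\phi'(\langle v,x\rangle),
\end{equation*}
so the equation $A_1u=\lambda u$ collapses to the scalar ODE
\begin{equation*}
a\,\phi''(t)+\eta\,t\,\phi'(t)=\lambda\,\phi(t),\qquad a:=\langle Qv,v\rangle.
\end{equation*}
Before solving it, I observe that the hypoellipticity hypothesis forces $a\neq 0$: otherwise, positive semidefiniteness of $Q$ would give $Qv=0$, placing the $B^*$-invariant line $\mathrm{span}(v)$ inside $\ker Q$ and contradicting the Kalman rank condition.

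Next, after the change of variable $\phi(t)=\psi(t/\sigma)$ with $\sigma^2=-a/\eta$, and setting $\nu:=\lambda/\eta$ (so $\mathrm{Re}\,\nu>0$), the ODE becomes the standard Hermite equation $\psi''-s\,\psi'+\nu\,\psi=0$, which admits the entire solution $\psi_\nu(s)={}_1F_1(-\nu/2;\tfrac{1}{2};s^2/2)$. Kummer's asymptotic for the confluent hypergeometric function yields $\psi_\nu(s)\sim c\,|s|^{-\nu-1}e^{s^2/2}$ as $|s|\to\infty$ (and when $\nu\in\{0,2,4,\ldots\}$, $\psi_\nu$ simply degenerates to an even Hermite polynomial, which lies in $L^1_\mu$ trivially). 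Consequently $|\psi_\nu(s)|e^{-s^2/2}$ decays at infinity like $|s|^{-\mathrm{Re}\,\nu-1}$, integrable on the line precisely because $\mathrm{Re}\,\nu>0$. Since the identity $\langle Q_\infty v,v\rangle=-a/(2\eta)$ shows that the pushforward of $\mu$ under $x\mapsto\langle v,x\rangle/\sigma$ is the standard one-dimensional Gaussian, the function $u(x)=\psi_\nu(\langle v,x\rangle/\sigma)$ belongs to $L^1_\mu$.

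The hard part will be upgrading the pointwise identity $Au=\lambda u$ to the statement $u\in D_1$ with $A_1u=\lambda u$ holding in $L^1_\mu$. I plan to apply the $p=1$ case of Lemma~\ref{dominio}: since $u\in C^\infty$ and the derivatives of $\psi_\nu$ satisfy the same asymptotic decay with one less factor of $|s|$, both $D_{ij}u$ and $|x||Du|$ belong to $L^1_\mu$, which is what is needed. A secondary subtlety appears when $B^*$ admits no real eigenvalue: then $v,\eta,a$ are genuinely complex, and one must take $\sigma^2$ to be the positive real number $2\,\mathrm{Re}\langle Q_\infty v,\bar v\rangle$; however, the whole integrability analysis depends only on $\mathrm{Re}\,\nu>0$ and goes through unchanged.
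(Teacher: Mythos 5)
Your easy half (contractivity gives $\sigma(A_1)\subset\{\mathrm{Re}\,\lambda\le 0\}$) is fine, but the core of the proposal has a genuine gap precisely at the step you yourself flag as ``the hard part''. The lemma you want to invoke requires $D_{ij}u\in L^1_\mu$ \emph{and} $|x|\,|Du|\in L^1_\mu$, and your eigenfunctions do not satisfy this for most $\lambda$: differentiating the asymptotics $\psi_\nu(s)\sim c\,|s|^{-\nu-1}e^{s^2/2}$ multiplies by $s$ at leading order, so $\psi_\nu'(s)\sim c\,|s|^{-\nu}e^{s^2/2}$ and $\psi_\nu''(s)\sim c\,|s|^{1-\nu}e^{s^2/2}$ (one factor of $|s|$ \emph{more}, not less). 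After pushing $\mu$ forward to the one–dimensional Gaussian, both $\int|x|\,|Du|\,d\mu$ and $\int|D_{ij}u|\,d\mu$ behave like $\int^{\infty}|s|^{1-\mathrm{Re}\,\nu}\,ds$, finite only when $\mathrm{Re}\,\nu=\mathrm{Re}(\lambda/\eta)>2$. For $\mathrm{Re}(\lambda/\eta)\le 2$ the two terms $\mathrm{Tr}(QD^2u)$ and $\langle Bx,Du\rangle$ are individually \emph{not} in $L^1_\mu$ (only their combination $\lambda u$ is), so the domain lemma does not apply and $u\in D_1$ is not established. Since $B$ has finitely many eigenvalues, this leaves a whole strip adjacent to the imaginary axis uncovered, and you cannot recover it from closedness of the spectrum. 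You would need a different verification there, e.g. checking $T(t)u=e^{\lambda t}u$ directly from the Kolmogorov/Mehler formula in the scalar variable and differentiating in $L^1_\mu$ at $t=0$. The paper sidesteps all of this: it conjugates $A_1$ by the invariant density, getting $G=G_0+kI$ where $G_0$ is the Ornstein--Uhlenbeck operator on $L^1(\mathbb{R}^N,dx)$ with drift matrix $-Q_\infty B^*Q_\infty^{-1}$ and $k=\tfrac12\mathrm{Tr}(QQ_\infty^{-1})=-\mathrm{Tr}(B)$, and then quotes the known description of the $L^1(dx)$ spectrum of $G_0$.

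The complex-eigenvalue case is also not a ``secondary subtlety''. The coefficient in your ODE is the \emph{bilinear} quantity $a=\sum_{i,j}q_{ij}v_iv_j$, not the Hermitian form, and for genuinely complex $v$ it can vanish even when $Q$ is positive definite: take $Q=I$ and $B$ with a nonreal conjugate pair of eigenvalues, say with eigenvector $v=(1,i)$, so that $a=1+i^2=0$. Thus your hypoellipticity argument (which tacitly uses $\langle Qv,\bar v\rangle$) does not give $a\neq 0$; when $a=0$ the equation degenerates to first order and produces no admissible smooth eigenfunction, so for such $B$ your construction collapses although the theorem still holds. When $a\neq 0$ but complex, $\sigma^2=-a/\eta$ is complex, and replacing it by $2\,\mathrm{Re}\langle Q_\infty v,\bar v\rangle$ destroys the reduction to Hermite's equation; moreover the integrability of $\psi_\nu$ along the complex-valued functional $\langle v,x\rangle$ is a genuinely two-dimensional Gaussian estimate governed by the bilinear identity $2\eta\langle Q_\infty v,v\rangle=-a$, which you do not address. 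Finally, a small fixable slip: for a fixed $\eta$, $\mathrm{Re}\,\lambda<0$ and $\mathrm{Re}\,\eta<0$ do not imply $\mathrm{Re}(\lambda/\eta)>0$; you must choose between $\eta$ and $\bar\eta$ to make this positive.
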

\begin{proof}Let $b$ be the density of $\mu$ with respect to {\!} the Lebesgue
measure, given by (\ref{defb}), and set $h=1/b$.  Let $\Phi:
L^1=L^1(\re^N,dx) \to {\!} L^1_\mu$ be the isometry defined by
$$
(\Phi {\!}  u)(x)=u(x)h(x), \qquad u\in L^1, \quad x \in \re^N.
$$
We define an operator {\!}     $(G,D_G)$ on $L^1$ by $D_G=\Phi^{-1}(D_1)$ and
$G=\Phi^{-1}A_1\Phi$.  If $u\in C_0^\infty (\re^N)$, then $u\in D_G$ and
\begin{eqnarray*}
Gu(x)&=&b(x)\bigl(A(uh)\bigr)(x)
\\
&=&Au(x)+2b(x)\sum_{i,j=1}^N q_{ij}D_ih(x) D_ju(x) + b(x)u(x) Ah(x).
\end{eqnarray*}
A direct computation shows  {\!}    that
$$
2b(x)\sum_{i,j=1}^N q_{ij}D_ih(x) D_ju(x)=
\langle QQ_{\infty}^{-1} x , Du(x)\rangle
$$
and
\begin{eqnarray*}
b(x)Ah(x)&=&\Bigl[ \frac{1}{2} {\rm Tr}(Q Q_{\infty}^{-1} ) +
\frac{1}{4}  \langle  Q Q_{\infty}^{-1} x,  Q_{\infty}^{-1} x \rangle
+\frac{1}{2} \langle B^*  Q_{\infty}^{-1} x,x \rangle\Bigr]
\\
&=& \Bigl[ \frac{1}{2} {\rm Tr}(Q Q_{\infty}^{-1} ) +
\frac{1}{4} \langle  Q Q_{\infty}^{-1} x,  Q_{\infty}^{-1} x \rangle
+ \frac{1}{2} \langle B Q_{\infty}   Q_{\infty}^{-1} x, Q_{\infty}^{-1} x
\rangle\Bigr].
\end{eqnarray*}
Using the identity $BQ_\infty +Q_\infty B^*=-Q$, which implies
$2\langle BQ_\infty x,x\rangle=-\langle Qx,x\rangle$,
it follows  {\!}    that
$\frac{1}{4}\langle Q Q_{\infty}^{-1} x, Q_{\infty}^{-1} x \rangle
+\frac{1}{2} \langle B Q_{\infty}   Q_{\infty}^{-1} x, Q_{\infty}^{-1} x
\rangle = 0$ and hence, setting $k=\frac{1}{2}{\rm Tr}(QQ_{\infty}^{-1})$,
\begin{eqnarray*}
Gu(x)&=& Au(x) + \langle QQ_{\infty}^{-1} x , Du(x)\rangle
         +ku(x)
\\
     &=&{\rm Tr}(QD^2u(x))+\langle(B+QQ_{\infty}^{-1}) x,Du(x)\rangle
         +ku(x)
\\
     &=&{\rm Tr}(QD^2u(x))-\langle(Q_\infty B^*Q_{\infty}^{-1})x,Du(x)\rangle
         +ku(x).
\end{eqnarray*}
The operator {\!}     $G_0={\rm Tr}(QD^2)-\langle(Q_\infty
B^*Q_{\infty}^{-1})x,D\rangle$, with a {\!} suitable domain $D_{G_0}$, is  {\!}    the
generator {\!}     of an Ornstein-Uhlenbeck semigroup in $L^1$.  Even
though an explicit description of $D_{G_0}$ is  {\!}    not known, we point out that
$ C_0^\infty (\re^N)$ is  {\!}    a {\!} core of $(G_0,D_{G_0})$ (see \cite[Proposition
3.2]{Metafune}).  The above computation shows  {\!}    that $G=G_0+kI$ on
$C_0^\infty (\re^N)$ and therefore $D_{G_0} \subset D_G$ and $G=G_0+kI$ on
$D_{G_0}$, since $(G,D_G)$ is  {\!}    closed.  On the other {\!}     hand, if $\lambda$ is
sufficiently large, $\lambda-G$ is  {\!}    invertible on $D_G$ and also {\!} on
$D_{G_0}$, because it coincides  {\!}    therein with $G_0+kI$.  Therefore
$D_G=D_{G_0}$.

Observe now that the identity $B +Q_\infty B^*Q_\infty^{-1} =-Q Q_\infty
^{-1}$ yields  {\!}    ${\rm Tr}(B) + {\rm Tr}( Q_\infty B^*Q_\infty^{-1})= -{\rm
Tr}(Q Q_\infty^{-1} )$ and hence ${\rm Tr}(Q_\infty B^* Q_\infty
^{-1})={\rm Tr}(B)=-k$.  Moreover {\!}     $G_0$ satisfies  {\!}    the hypoellipticity
condition. Indeed, if $E$ is  {\!}    an invariant subspace of
$Q_\infty^{-1}BQ_\infty$, contained in ${\rm Ker}(Q)$, the equation
$BQ_\infty +Q_\infty B^*=-Q$ easily implies  {\!}    that $B^* (E) \subset E$.
It follows  {\!}    that $E=\{0\}$, since $A$ is  {\!}    hypoelliptic.

Since $\sigma  (-Q_\infty B^*Q_\infty^{-1})=-\sigma  (B)\subset\cc^+$,
from \cite[Theorem 4.7]{Metafune} it follows  {\!}    that the spectrum of
$(G_0,D_{G_0})$ is  {\!}    the half-plane
$$
\bigl \{\lambda {\!} \in \cc: {\rm Re}\ \lambda {\!} \leq
{\rm Tr}(Q_\infty B^*Q_\infty^{-1})=-k\bigr {\!}     \}
$$
and that every complex number {\!}     $\lambda$ with ${\rm Re}\ \lambda {\!} <-k$
is  {\!}    an eigenvalue.  Since $G=G_0+kI$ and the spectra {\!} of $(A_1,D_1)$
and $(G,D_G)$ coincide, the proof is  {\!}    complete.
\end{proof}

Observe  that the eigenvalues  {\!}    associated to {\!} polynomial
eigenfunctions  {\!}    are the same for {\!}     all $p\geq 1$.  In fact, assuming
that the eigenfunctions  {\!}    are polynomials, the arguments
in Section 3 can be used also {\!} for {\!}     $p=1$ in order {\!}     to {\!} determine
the eigenvalues.  However {\!}     in $L^1_\mu$ there are nonpolynomial
eigenfunctions  {\!}    and the spectrum is  {\!}    much larger. Moreover {\!}     we have

\begin{corollary}\label{corL1}
The semigroup $\Tt$ does  {\!}    not map $L^1_\mu$ into {\!} $W^{1,1}_\mu$,
for {\!}     any $t>0$.
\end{corollary}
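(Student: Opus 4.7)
\noindent\textbf{Proof proposal for Corollary \ref{corL1}.}

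The plan is to argue by contradiction using Theorem \ref{specL1}, which provides an uncountable point spectrum for $A_1$. Suppose $T(t_0)$ maps $L^1_\mu$ into $W^{1,1}_\mu$ for some $t_0>0$. Since $T(t_0)$ is bounded on $L^1_\mu$ and $W^{1,1}_\mu\hookrightarrow L^1_\mu$ continuously, the closed graph theorem makes $T(t_0):L^1_\mu\to W^{1,1}_\mu$ a bounded operator. The goal is to promote this one-derivative smoothing into $L^p_\mu$-regularization for some $p>1$, so that every $L^1_\mu$-eigenfunction of $A_1$ also becomes an $L^p_\mu$-eigenfunction of $A_p$, thereby forcing the uncountable spectrum from Theorem \ref{specL1} into the countable set described by Theorem \ref{spettro}.

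The first step is to iterate the smoothing. For any $v\in W^{1,1}_\mu$ we have the commutation identity $DT(t)v=e^{tB^*}T(t)Dv$ (proved by density from $C_0^\infty(\re^N)$, as in the proof of Lemma \ref{regularity}). Hence, applying $T(t_0)$ componentwise to $Dv\in L^1_\mu$ and using the hypothesis gives $T(t_0)v\in W^{2,1}_\mu$. Iterating, $T(kt_0):L^1_\mu\to W^{k,1}_\mu$ is bounded for every integer $k\ge 1$.

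The second step is to convert higher Sobolev regularity with respect to $\mu$ into membership in $L^p_\mu$ for some $p>1$. For $k$ sufficiently large the Gaussian Sobolev embedding (which holds because $\mu$ is a non-degenerate Gaussian measure on $\re^N$ and $k>N$ permits the standard local Sobolev embedding $W^{k,1}\hookrightarrow C_b$, while the Gaussian weight controls the tails) yields a continuous inclusion $W^{k,1}_\mu\hookrightarrow L^p_\mu$ for some $p>1$. Consequently $T(kt_0)$ maps $L^1_\mu$ continuously into $L^p_\mu$.

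Now invoke Theorem \ref{specL1}: for every $\lambda\in\cc$ with $\mathrm{Re}\,\lambda<0$ there exists $u_\lambda\in L^1_\mu\setminus\{0\}$ with $T(t)u_\lambda=e^{\lambda t}u_\lambda$. Then $u_\lambda=e^{-k\lambda t_0}T(kt_0)u_\lambda\in L^p_\mu$, so $u_\lambda\in D_p$ with $A_pu_\lambda=\lambda u_\lambda$ (the argument in Lemma \ref{dominio}, applied to $u_\lambda$ which is smooth by the kernel representation \eqref{OUS}, shows $u_\lambda\in D_p$ once it is in $L^p_\mu$ together with its derivatives; the required derivative estimates follow from $T(kt_0):L^1_\mu\to W^{k,1}_\mu$ combined with the commutation identity and the same Sobolev step). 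Thus every $\lambda$ with $\mathrm{Re}\,\lambda<0$ would be in the point spectrum of $A_p$. But Theorem \ref{spettro} states $\sigma(A_p)=\{\sum_{j=1}^r n_j\lambda_j:n_j\in\nat\}$, a countable set, a contradiction. Hence no such $t_0$ exists.

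The main obstacle is step two: the Gaussian Sobolev embedding for $p=1$ is the delicate point, because the analogue of Lemma \ref{alessandra} fails at the endpoint. One must use that $\mu$ has strictly positive smooth density with polynomial bounds on $Q_\infty^{-1}$ together with enough iterations $k$ of the gain of one derivative so that the resulting regularity overwhelms the loss inherent in going from $L^1$ to $L^p$; this is where the choice of $k$ sufficiently large is critical, and where most of the technical work lies.
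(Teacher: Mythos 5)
Your proposal departs from the paper's proof at the decisive point, and the step you yourself flag as ``where most of the technical work lies'' is not merely delicate: it is false. There is no embedding $W^{k,1}_\mu \hookrightarrow L^p_\mu$ for any $p>1$, no matter how large $k$ is. Concretely, let $b$ be the Gaussian density of $\mu$ and set $u(x) = b(x)^{-1}(1+|x|^2)^{-a}$. Every derivative $D^\alpha u$ with $|\alpha|\le k$ is a finite sum of terms of the form (polynomial of degree $\le |\alpha|$)\,$\times\, b(x)^{-1}(1+|x|^2)^{-a-j}$, so $\int |D^\alpha u|\,d\mu \le C\int_{\R^N} (1+|x|)^{k}(1+|x|^2)^{-a}\,dx < \infty$ once $a$ is large relative to $k$ and $N$; hence $u \in W^{k,1}_\mu$ for every fixed $k$. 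But for every $p>1$ one has $\int |u|^p\,d\mu = \int_{\R^N} b(x)^{1-p}(1+|x|^2)^{-ap}\,dx = \infty$, because $b^{1-p}$ grows like $e^{c(p-1)|x|^2}$. Thus $\mu$-weighted Sobolev regularity never improves the integrability exponent; this is exactly the phenomenon behind the logarithmic Sobolev inequality (only a logarithmic gain, even at $p=2$), and your heuristic ``local Sobolev embedding plus the Gaussian weight controls the tails'' misplaces the obstruction, which is the growth at infinity permitted to $L^1_\mu$-functions --- growth that differentiation does not reduce. Worse, the gap cannot be repaired along your route: the non-polynomial eigenfunctions of $A_1$ produced in the proof of Theorem \ref{specL1} are precisely of the form $b^{-1}v$ with $v$ integrable against Lebesgue measure, so they genuinely lie in no $L^p_\mu$ with $p>1$ (as they must, since $\sigma(A_p)$ is countable); no correct argument can place them there.

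The fix is the paper's route, which never changes the exponent $p$. Your first step (iterating the commutation identity $DT(t)u=e^{tB^*}T(t)Du$ to get $T(t)(L^1_\mu)\subset C^k(\R^N)\cap W^{k,1}_\mu$ for $t\ge kt_0$) agrees with the paper and is fine. But then, instead of trying to leave $L^1_\mu$, observe that Lemma \ref{stima} holds verbatim for $p=1$: its proof uses only the commutation identity and the contractivity of $T(t)$, both valid on $L^1_\mu$. Running the argument of Proposition \ref{polinomio} in $L^1_\mu$, any eigenfunction $u$ of $A_1$ with eigenvalue $\lambda$ satisfies $T(t)u=e^{\lambda t}u\in W^{k,1}_\mu$ for all $k$ under your contradiction hypothesis, and the $p=1$ decay estimate forces $D^\alpha u=0$ whenever $|\alpha|\,|s(B)|>|\mathrm{Re}\,\lambda|$, so $u$ is a polynomial. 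Lemma \ref{riduzione} (whose eigenvalue computation concerns polynomial eigenfunctions and is insensitive to $p$) then shows the point spectrum of $A_1$ is countable, contradicting Theorem \ref{specL1} directly in $L^1_\mu$.
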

\begin{proof} Assume by contradiction that $T(t_0)(L^1_\mu)$ is
contained in $W^{1,1}_\mu$ for {\!}     some $t_0>0$. This  {\!}    implies  {\!}    that
$T(t)(L^1_\mu)\subset W^{1,1}_\mu$ for {\!}     every $t\geq t_0$. Proceeding
as  {\!}    in Lemma {\!} \ref{regularity}, we find that
$T(t)(L^1_\mu)\subset C^k(\re^N)\cap W^{k,1}_\mu$ for {\!}     every
$k\in \nat$, $t \geq k t_0$. Remark that  Lemma {\!} \ref{stima} holds
also {\!} if  $p=1$.  Arguing as  {\!}    in Proposition \ref{polinomio},
we infer {\!}     that all the eigenfunctions  {\!}    of $A_1$ are polynomials.
Thus, by Lemma {\!} \ref{riduzione}, we  deduce that the point
spectrum of $A_1$ is  {\!}    discrete. This  {\!}    is  {\!}    the desired  contradiction.
\end{proof}


\begin{acknowledgment}
The authors  {\!}    wish  to {\!}  thank  A. Rhandi {\!}  (Marrakech) and M. Rockner
(Bielefeld)  for {\!}     valuable   discussions. The third author
thanks  {\!}    the whole stochastics  {\!}    group at Bielefeld for {\!}     the kind hospitality.
\end{acknowledgment}


\end{article}

\begin{references}
\bibitem{Arendt}
\refer
{W. Arendt:}{Gaussian estimates  {\!}    and interpolation of the spectrum in
$L^p$,}{Diff. Int. Eqs.}{7}{(1994), 1153-1168.}
\bibitem{Bakry}
\refer
{D.  Bakry:}{L'hypercontractivit\'e et son utilisation en th\'eorie des
semi-groupes, in:  D.  Bakry, R.  D.  Gill, S.  A.  Molchanov,}{Lectures  {\!}    on
probability theory,}{}{Springer {\!}     LNM 1581, (1992), 1-114.}
\bibitem{ChoGol}
\refer
{A.  Chojnowska-Michalik, B.  Goldys:}{Existence, uniqueness  {\!}    and invariant
measures  {\!}    for {\!}     stochastic semilinear {\!}     equations  {\!}    on Hilbert spaces,}{Prob.
Theory Relat.  Fields}{102}{(1995), 331-356.}
\bibitem{ChoGol2}
\refbook
{A.  Chojnowska-Michalik, B.  Goldys:}{Symmetric Ornstein-Uhlenbeck
generators: characterizations  {\!}    and identification of domains,}{preprint
(2001).}
\bibitem{ChoGol3}
\refer
{A.  Chojnowska-Michalik, B.  Goldys:}{Generalized symmetric
Ornstein-Uhlenbeck in $L^p$:  Littlewood-Paley-Stein inequalities  {\!}    and
domains  {\!}    of generators,}{J.  Funct.  Anal.  }{}{to {\!} appear.}
\bibitem{DPL}
\refer
{G.  Da {\!} Prato, A.  Lunardi:}{On the Ornstein-Uhlenbeck operator {\!}     in spaces
of continuous  {\!}    functions,}{J.  Funct.  Anal.}{131}{(1995), 94-114.}
\bibitem{DPZ2}
\refbook
{G.  Da {\!} Prato, J.  Zabczyk:}{  Stochastic equations  {\!}    in infinite dimensions,}
 {Encyclopedia {\!} of Mathematics  {\!}    and its  {\!}    Applications  {\!}    44,  Cambridge University
 Press, 1992.}
\bibitem{DPZ}
\refbook
{G.  Da {\!} Prato, J.  Zabczyk:}{Ergodicity for {\!}     Infinite Dimensional Systems,}
{Cambridge U.P., 1996.}
\bibitem{EN}
\refbook
{K.  Engel, R.  Nagel:}{One-parameters  {\!}    Semigroup for {\!}     Linear {\!}     Evolution
Equations,}{Springer {\!}     Graduate Texts  {\!}    in Mathematics  {\!}    194, 2000.}
\bibitem{Fuh}
\refer
{M.  Fuhrman:}{Analyticity of transition semigroups  {\!}    and closability of
bilinear {\!}     forms  {\!}    in Hilbert spaces,}{Studia {\!} Math.,}{115}{(1995), 53--71.}
\bibitem{Goldys}
\refer
{B.  Goldys:}{On Analyticity of Ornstein-Uhlenbeck semigroups,} {Rend.
Mat. Acc. Lincei, s. 9}{10}{(1999), 131-140.}
\bibitem{Hormander}
\refer
{L. H\"ormander:}{Hypoelliptic second order {\!}     differential operators,}{Acta
Math.,}{119}{(1967), 147-171.}
\bibitem{LanPol}
\refer
{E. Lanconelli, S. Polidoro:}{On a {\!} class  {\!}    of hypoelliptic
evolution operators,}
{Rend. Sem. Mat. Univ. Pol. Torino, PDE,s}{52}{(1994), 26-63.}
\bibitem{Lunardi}
\refer
{A.  Lunardi:}{On the Ornstein-Uhlenbeck operator {\!}     in $L^2$ spaces  {\!}    with
respect to {\!} invariant measures,}{Trans.  Amer.  Math.  Soc.}{349}{(1997),
155-169.}
\bibitem{Lunardi2}
\refer
{A.  Lunardi:}{Schauder {\!}     estimates  {\!}    for {\!}     a {\!} class  {\!}    of degenerate elliptic
and parabolic operators  {\!}    with unbounded coefficients,}{Ann. Sc. Norm.
Sup. Pisa}{24}{(1997), 133-164.}
\bibitem{Metafune}
\refer
{G.  Metafune:}{$L^p$-spectrum of Ornstein-Uhlenbeck operators,}{Ann.  Sc.
Norm.  Sup.  Pisa,} {}{to {\!} appear.}
\bibitem{Meyer}
\refer
{P.  A.  Meyer:}{Note sur {\!}     le processus  {\!}    d'Ornstein-Uhlenbeck,}{S\'eminaire
de probabilit\'es  {\!}    XVI,}{}{Springer {\!}     LNM 920 (1982), 95-133.}
\bibitem{priola1}
\refer
{E. Priola:} {The fundamental solution for {\!}     a {\!} degenerate parabolic
Dirichlet problem,} {Partial differential equations  {\!}    (Torino, 2000),
Univ. Torino, Turin (2000),} {} {137-148.}
\bibitem{KryRockZa}
\refer
{M. R\"ockner:}{$L^p$ analysis  {\!}    of finite and infinite dimensional
diffusion operators, in:}{Stochastic PDE's  {\!}    and Kolmogorov's  {\!}    equations  {\!}    in infinite dimensions,}{}{(G. Da {\!} Prato {\!} ed.), Springer {\!}     LNM 1715 (1999), 65-116.}
\bibitem{Seid}
\refer
{T.  Seidman:}{How violent are fast controls?,}{Control Signals
Systems,} {1} {(1988), 89-95.}
\bibitem{Za}
\refbook
{J. Zabczyk:}{Mathematical control theory: an introduction,}{Birkh\"auser,
1992.}
\end{references}
\end{document}